\definecolor{blue2}{rgb}{0.67, 0.9, 0.93}
\colorlet{siaminlinkcolor}{green!50!black}
\colorlet{siamexlinkcolor}{red!50!black}
\colorlet{siamreviewcolor}{black!50}
\numberwithin{equation}{section}
\newtheorem{theorem}{Theorem}[section]
\newtheorem{lemma}[theorem]{Lemma}
\newtheorem{proposition}[theorem]{Proposition}
\newtheorem{cor}[theorem]{Corollary}
\newtheorem{hyp}{Hypothesis}
\newtheorem{remark}{Remark}
\newcommand{\N}{{\mathbb N}}
\newcommand{\R}{{\mathbb R}}
\newcommand{\Z}{\mathbb{Z}}
\renewcommand{\d}{{\mathrm d}}
\DeclareMathOperator{\sech}{sech}
\newcommand{\mand}{\quad \text{and}\quad}
\newcommand{\joinR}{\hspace{-.15em}}
\newcommand{\RomanV}{\scalebox{0.6}{\textit{V}}}
\newcommand{\RomanI}{\scalebox{0.6}{\textit{I}}}
\newcommand{\RomanII}{\mbox{\RomanI\joinR\RomanI}}
\newcommand{\RomanIII}{\mbox{\RomanI\joinR\RomanII}}
\newcommand{\RomanIV}{\mbox{\RomanI\joinR\RomanV}}
\title{\sc Radiating Solitary Waves in an FPUT Lattice with Random Coefficients }
\author[1]{H.J. Hupkes \thanks{\tt \href{mailto:hhupkes@math.leidenuniv.nl}{hhupkes@math.leidenuniv.nl}}}
\author[2]{J.A. McGinnis \thanks{\tt \href{mailto:jam887@sas.upenn.edu}{jam887@sas.upenn.edu}}}
\author[1]{R.W.S. Westdorp \thanks{\tt \href{mailto:r.w.s.westdorp@math.leidenuniv.nl}{r.w.s.westdorp@math.leidenuniv.nl}}}
\author[3]{J.D. Wright \thanks{\tt \href{mailto:jdw66@drexel.edu}{jdw66@drexel.edu}}}
\affil[1]{\small Mathematisch Instituut, Universiteit Leiden, P.O. Box 9512, 2300 RA Leiden, The Netherlands}
\affil[2]{\small Department of Mathematics, University of Pennsylvania, 209 South 33rd Street, Philadelphia, PA 19104 USA}
\affil[3]{\small Department of Mathematics, Drexel University, Korman Center, 33rd \& Market Streets, Philadelphia, PA 19104 USA}
\begin{document}

\maketitle
\begin{abstract}
We study the propagation of solitary waves in a Fermi–Pasta–Ulam–Tsingou (FPUT) lattice with small random heterogeneity in the linear spring force. Perturbed by the random environment, solitary waves lose energy through a radiative tail, resulting in gradual amplitude attenuation. As long as the wave remains coherent, we track its position and amplitude via a modulation approach. An expansion of the resulting modulation equations provides explicit predictions for the slow average amplitude decay, which we verify through numerical simulations.
\end{abstract}

\smallskip
\noindent\textbf{Keywords:}  FPUT; Lattice Differential Equations; Solitary waves; Random Coefficients

\smallskip
\noindent\textbf{MSC 2020:} 37K40; 37K60; 37H10

\section{Introduction}
When solitary waves in dispersive systems interact with spatial inhomogeneity, numerical simulations demonstrate that the waves emit a radiative tail \cite{akylas, beale, benilov, bona, simple}. This is decidedly so in Fermi–Pasta–Ulam–Tsingou (FPUT) lattices where the phenomenon has been observed in a variety of settings \cite{Pinski, Porter, Flach, giardetti, okada}. The study
of lattices with spatially varying material coefficients has a long history \cite{Maradudin, Dyson}, as disorder naturally arises in crystals
through mechanisms such as atomic replacement, isotopic variation, or structural
defects \cite{Bell}. It is well-known that homogeneous FPUT lattices support exact solitary wave solutions \cite{FPI, FPII}. In lattices with \textit{periodic} heterogeneity, propagation of localized solitary waves is obstructed by slow energy loss caused by oscillations in the tail \cite{Chirilus-Bruckner, Gaison, Mielke}. However, periodic lattices \textit{do} support 
the propagation of generalized solitary waves such as \textit{micropterons} and \textit{nanopterons} \cite{ nanopteron1, nanopteron2, nanopteron3, nanopteron4}, which consist of an exponentially localized core accompanied by a (typically) non-vanishing periodic tail. By contrast, lattices with randomly varying coefficients do not necessarily support such coherent propagation, and it is precisely this phenomenon that motivates the present study.

In this article, we explore the generation of radiating tails caused by random heterogeneity in FPUT lattices. Recent work 
\cite{ randomwalk,mcginnis} shows that long waves in such lattices will remain coherent over (somewhat) long time scales, depending on features of the randomness; this is accomplished by proving rigorous approximations by either wave or Korteweg-de Vries (KdV) equations. Nonetheless, over very long times the radiation has substantial effects on the principal solitary wave, causing (by virtue of energy conservation) a marked attenuation of the amplitude.  We use techniques devised in \cite{westdorp, westdorp2, westdorp3} to study random perturbations in KdV equations to analyze this attenuation.

\paragraph{FPUT}
We study the FPUT lattice equation
\begin{align}\label{eqn:FPU}
\dot{r}(j,t) =& \delta_+p(j,t), \\
\dot{p}(j,t) =& \delta_-[\mathcal{V}_\sigma' (r)](j,t),\qquad j\in\mathbb{Z},\nonumber
\end{align}
where
\[\delta_+f(j)=f(j+1)-f(j) \mand \delta_-f(j)=f(j)-f(j-1)\]
are the right and left finite-difference operators and 
\[\mathcal{V}_\sigma(r)=\frac{1}{2}(1+\sigma\kappa)r^2+\frac{1}{3}r^3\]
is the spring potential. The random sequence $\kappa$ models variations in the linear spring force, and the parameter $\sigma\geq 0$ controls the strength of this random effect. This terminology stems from the second-order form of the system
\begin{align}\ddot{y}(j,t)=&\mathcal{V}_\sigma' \big(y(j+1)-y(j)\big)-\mathcal{V}_\sigma' \big(y(j)-y(j-1)\big)\label{eqn:newton},\end{align}
 upon identifying $r(j)=y(j+1)-y(j)$ and ${p}(j)=\dot{y}(j)$. In this form, the system has the interpretation of a chain of masses connected by springs. The quantity $y(j)$ models the displacement of the $j$-th mass from equilibrium, and~\eqref{eqn:newton} is Newton's second law of motion.
 
 In the absence of heterogeneity $(\sigma=0)$, the lattice differential equation~\eqref{eqn:FPU} is also known as the FPUT-$\alpha$ lattice, referring to its cubic interaction potential. Upon introducing the Hamiltonian
\[
H_{\sigma\kappa}(r,p) = \sum_{j \in \Z} \frac{1}{2} p(j,t)^2 + \frac{1}{2}\big(1+\sigma\kappa(j)\big) r(j,t)^2 + \frac{1}{3} r(j,t)^3
\]
and the skew-symmetric operator
\[
\mathcal{J}:=\begin{bmatrix}
0 & \delta_+\\
\delta_- & 0
\end{bmatrix},
\]
we may alternatively write 
\begin{align}\label{eqn:hamiltonianform}
\begin{bmatrix}
    \dot{r}\\
    \dot{p}
\end{bmatrix} = \mathcal{J} H_{\sigma\kappa}'(r,p)=\mathcal{J}H_0'(r,p) + \sigma \mathcal{J}\begin{bmatrix}
    \kappa r\\
    0
\end{bmatrix}
.
\end{align}
Here, $H_{\sigma\kappa}'$ is the gradient of $H_{\sigma\kappa}$ with respect to the $\ell^2(\Z;\R^2)$ inner-product. Thus, the random coefficients we consider respect the Hamiltonian structure of the FPUT system. We assume the following:
\begin{hyp}\label{hyp:coeff}
The coefficients $\kappa(j)$, $j\in\Z$, are i.i.d.\footnote{We suspect that we can also handle more general translation-invariant covariance between the random coefficients, of the form 
\[\mathbb{E}[\kappa(i)\kappa(j)]=q(|i-j|),\]
assuming appropriate decay conditions on $(q(j))_{j\in\N}$.
For simplicity, we restrict ourselves in this work to i.i.d. coefficients.} random variables with mean zero and variance 1. The support of $\kappa(j)$, $j\in\Z$, is furthermore contained in an interval $[-\alpha,\alpha]$, and the parameter $\sigma\geq0$ satisfies $\sigma \alpha<1$. 
\end{hyp}
Probabilities associated with $\kappa$ are represented by $\mathbb{P}$ and expectations by $\mathbb{E}$. The local well-posedness of~\eqref{eqn:FPU} in $\ell^2(\Z;\R^2)$ is a straightforward consequence of the fact that $\mathcal{J}H_{\sigma\kappa}'$ is locally Lipschitz on $\ell^2(\Z;\R^2)$.  

\paragraph{Solitary waves}
 Solitary waves in FPUT lattices were extensively studied by Friesecke and Pego in the series \cite{FPI,FPII,FPIII,FPIV}. Notably, there exists a constant $c_+>1$, such that for all $c\in(1,c_+]$ there exists a smooth, exponentially decaying function $\phi_{c}= ( r_c,\ p_c)^\top: \R \to \R^2$ so that
\begin{align}
u(j,t) = \phi_c(j-ct) \label{eqn:exactwave}
\end{align}
solves~\eqref{eqn:FPU} with $\sigma=0$ \cite{FPI}. The profile function satisfies the advance-delay system:
\begin{align}
\label{eqn:TWE}
-c \phi_c' = \mathcal{J} H_0'(\phi_c).
\end{align}
The invariance of~\eqref{eqn:TWE} with respect to shifts in the profile coordinate $x$ implies that, for any $\xi \in \R$, \[\phi_{\xi,c} =\begin{bmatrix}r_{\xi,c}\\p_{\xi,c}\end{bmatrix}:=\begin{bmatrix}r_{c}(\cdot-\xi)\\p_{c}(\cdot-\xi)\end{bmatrix}\] also solves~\eqref{eqn:TWE}. Thus, the solitary waves form a two-parameter solution family, whose energy is independent of the phase $\xi$ and increases with the wave speed $c$ \cite[Theorem 1.1]{FPI}:
\[\frac{\d}{\d c}H_0(r_{c},p_{c})>0.\]
The FPUT lattice famously shares a strong connection with the KdV equation: in an appropriate continuum limit, the lattice dynamics are governed by an effective KdV equation \cite{zabusky, Schneider}. In this long-wave limit, the wave profiles $\phi_c$ approach the $\sech^2$ shape of KdV solitons \cite[Theorem 1.1]{FPI}:
\begin{align} 
 r_c(x) \approx -p_c(x) \approx  \frac{\epsilon^2}{8}\sech^2(\epsilon x/2),\label{eqn:approx}\end{align}
where the small parameter $\epsilon>0$ is defined through $c=1+\epsilon^2/24$. This gives the speed $c$ a second interpretation: $c-1$ is proportional to the (approximate) wave-amplitude.

\paragraph{Random Coefficients}
In \cite{mcginnis}, the second and fourth author derived effective KdV equations that govern the dynamics of an FPUT lattice with randomly varying mass coefficients. Central to this approximation is a \textit{transparency condition} on the random variation, enforcing that the random coefficients appear as a perfect discrete Laplacian. Numerical simulations reveal that, if the masses are instead perturbed by  i.i.d. random variables, a KdV approximation is no longer appropriate. Notably, solitary waves undergo an amplitude attenuation, inconsistent with KdV dynamics. In this work, we consider a random variation in spring coefficients rather than in the masses. This perturbation is somewhat more straightforward, as it arises purely at the linear level. We emphasize that our method can be applied more broadly, but we restrict ourselves here to spring force heterogeneity to keep the exposition transparent.

We study the effect of the random heterogeneity $\sigma\kappa$ in~\eqref{eqn:FPU} on the propagation of the solitary waves by supplying~\eqref{eqn:FPU} with the initial condition
\begin{align}\begin{bmatrix}r(j,0)\\p(j,0)\end{bmatrix}=\begin{bmatrix}r_{c_*}(j)\\p_{c_*}(j)\end{bmatrix},\quad j\in\Z,\label{eqn:initial}\end{align}
for some $c_*\in (1,c_{+})$. In the deterministic setting, the solitary wave family is known to be asymptotically stable, meaning that initial conditions close to the wave family asymptotically converge to it. Consequently, when the solitary wave 
is perturbed 
by the random coefficients $\kappa$, it does not fully disintegrate. Rather, it slowly loses energy over time and descends to lower speeds/amplitudes. In \Cref{fig:rovertime}, we can observe from a numerical simulation with i.i.d. coefficients (cf. \Cref{hyp:coeff}) that the loss of energy manifests in the formation of a random radiative `tail' behind the wave.

\begin{figure}[H]
    \centering
    \begin{subfigure}[t]{0.3\textwidth}
        \centering
        \includegraphics[width=\linewidth]{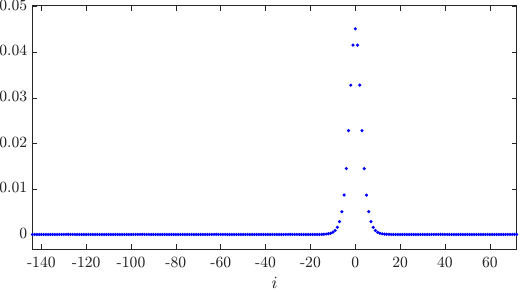}
        \caption{$t=0$.}
        \label{subfig:t=0}
    \end{subfigure}
    \hfill
    \begin{subfigure}[t]{0.3\textwidth}
        \centering
        \includegraphics[width=\linewidth]{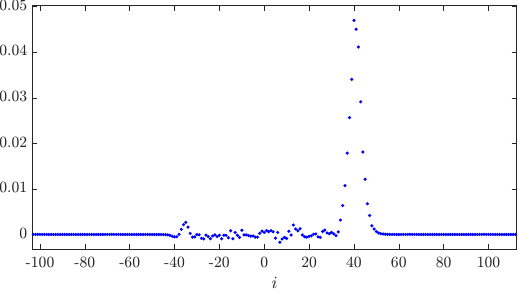}
        \caption{$t=40$.}
        \label{subfig:t=40}
    \end{subfigure}
    \hfill
    \begin{subfigure}[t]{0.3\textwidth}
        \centering
        \includegraphics[width=\linewidth]{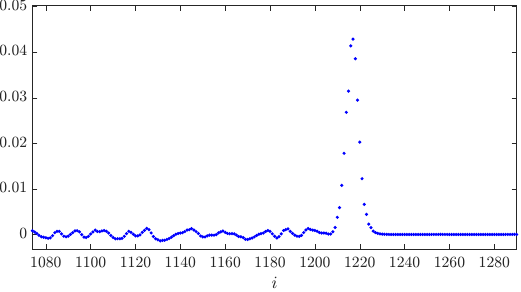}
        \caption{$t=1200$.}
        \label{subfig:t=1200}
    \end{subfigure}

    \caption{A particular realization of the $r$-component of numerical solutions to~\eqref{eqn:FPU} with initial condition~\eqref{eqn:initial}. In this realization, $\sigma=0.07$, $c_*=1.015$ and $\kappa(i)$ is drawn from a uniform distribution on $[-\sqrt{3},\sqrt{3}]$. See \Cref{app:scheme} for details regarding the numerical schemes used throughout this paper.}
    \label{fig:rovertime}
\end{figure}
\Cref{fig:sigmas} shows how the effective amplitude $c(t)$---which we define more precisely below---gradually decays with time as a consequence of the energy loss.  Although the change in amplitude is random, a clear attenuation emerges over time.
\begin{figure}[H]
    \centering
   \begin{subfigure}[t]{0.48\textwidth}
        \centering
        \includegraphics[width=\linewidth]{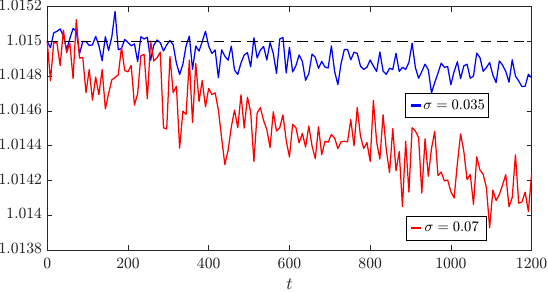}
\end{subfigure}
 \hfill
    \begin{subfigure}[t]{0.48\textwidth}
        \centering
        \includegraphics[width=\linewidth]{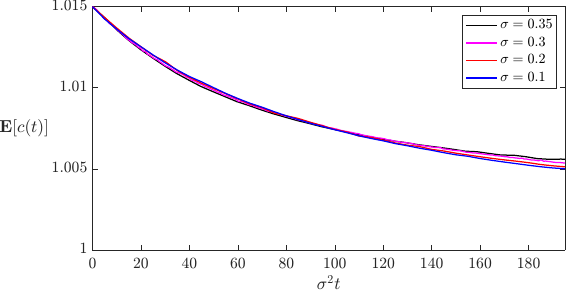}
    \end{subfigure}
        \caption{Left: Amplitude parameter $c(t)$ over time, for two realizations with $\sigma\in\{0.035,0.07\}$, $c_*=1.015$, and $\kappa(i)$ drawn from a uniform distribution on $[-\sqrt{3},\sqrt{3}]$. Right: Sample mean $\mathbf{E}[c(t)]$ computed over 2000 realizations, for $\sigma\in\{0.1,0.2,0.3,0.35\}$ and $c_*=1.015$. We emphasize the $\sigma^2$-scaling on the temporal axis.}
        \label{fig:sigmas}
   \end{figure} 

Our main objective in this work is to formally derive the attenuation induced by the random coefficients $\sigma\kappa$.
Using modulation theory, we obtain an explicit $\mathcal{O}(\sigma^2)$ prediction for the average amplitude decay, $\mathbb{E}[\dot{c}(t)]$, expressed in terms of integrals/sums involving the wave profiles $\phi_c$ and the Green's function associated to the linearized dynamics near $\phi_c$; see~\eqref{eqn:linearized} below. The latter plays a central role in the emission of a radiative tail behind the wave, which is the key mechanism driving amplitude attenuation. Our most explicit (and crudest) approximation is based on the kernel associated with the discrete wave equation, which can be written in closed form using Bessel functions. This leads to an explicitly solvable system that describes the radiation of the solitary wave and is able to reproduce the decay profile that emerges from \Cref{fig:sigmas} (right) in the limit $\sigma \downarrow 0$; see \Cref{fig:sigmastangents+ODE} ahead. 
This outcome is comparable in spirit to \cite{simple}, but now derived directly
from the physically relevant FPUT model. Our method is based on the linear stability theory for the wave profiles $\phi_c$, which we outline below.

\paragraph{Linear stability}

The linearized dynamics around a solitary wave $\phi_{c}(\cdot-\xi)$ are encoded by the operator $\mathcal{J}\mathcal{L}_{\xi,c}$, where $\mathcal{L}_{\xi,c}$ is the self-adjoint operator that acts on $\eta=(\eta_r,\ \eta_p)^\top\in\ell^2(\Z;\R^2)$ as
\begin{align}\mathcal{L}_{\xi,c}\eta=H_0''(\phi_{\xi,c})\eta=\begin{bmatrix}
        \eta_r + 2r_{c}(\cdot-\xi)\eta_r\\
        \eta_p 
    \end{bmatrix}.\label{eqn:LH}\end{align}
Indeed, linearizing~\eqref{eqn:FPU} with $\sigma = 0$ around the
traveling wave~\eqref{eqn:exactwave}, 
we arrive at the system
\begin{align}\dot{w}(j,t)=\mathcal{J}\mathcal{L}_{ct,c}w(j,t)=\mathcal{J}H_0''(\phi_{c}(\cdot-ct))w(j,t).\label{eqn:linearized}\end{align}
Note that the homogeneous part of $\mathcal{J}\mathcal{L}_{ct,c}$ is the discrete wave operator $\mathcal{J}$, whose continuous spectrum is contained in the imaginary axis. Differentiating~\eqref{eqn:TWE} with respect to $x$ and $c$ shows that
\[(\mathcal{J}\mathcal{L}_{\xi,c}+c\partial_x)\partial_\xi\phi_{\xi,c}=0,\mand (\mathcal{J}\mathcal{L}_{\xi,c}+c\partial_x)\partial_c\phi_{\xi,c}=\partial_\xi\phi_{\xi,c}.\]
Hence, $\partial_\xi\phi_{\xi,c}$ and $\partial_c\phi_{\xi,c}$ are generalized eigenfunctions of $\mathcal{J}\mathcal{L}_{\xi,c}+c\partial_x$, operating on functions of the real line. Here, $c\partial_x$ generates translations at the wave speed $c$, which are not directly represented in the linearization due to the discrete setting. Next, let $\delta_+^{-1}$ and $\delta_-^{-1}$ be given by
\[
\delta_+^{-1}f(j) = \sum_{d=-\infty}^{-1} f(j+d) \mand \delta_-^{-1}f(j)= \sum_{d = -\infty}^0 f(j+d),\quad f\in\ell^1(\Z;\R).
\]
These are the (formal) inverses of $\delta_\pm$. Then let
\[
\mathcal{J}^{-1} := \begin{bmatrix}
 0 & \delta_-^{-1} \\\delta_+^{-1} & 0    
\end{bmatrix} .
\]
This is the (formal) inverse of $\mathcal{J}$. The bilinear form
\[
\Omega(f,g) := \langle \mathcal{J}^{-1} f, g \rangle_{\ell^2(\Z;\R^2)}, \qquad f,g \in\ell^1(\Z;\R^2), 
\] 
is the \textit{symplectic form} for the FPUT system. This symplectic form characterizes a subspace that avoids the neutral modes $\partial_\xi\phi_{\xi,c}$ and $\partial_c\phi_{\xi,c}$, within which the linear flow defined in~\eqref{eqn:linearized} is exponentially stable.
\begin{proposition}[\cite{FPIII}, Theorem 1.2; \cite{FPIV}, Theorem 2.2]\label{prop:linearstability}
    Let $c_-\in(1,c_+)$. Then there exist positive constants $a,b$ and $K$ such that the following holds. For all $c\in [c_-,c_+ ]$ and $w_0$ satisfying $e^{a\cdot}w_0\in\ell^2(\Z;\R^2)$ with
    \begin{align}\Omega(\partial_\xi\phi_{c},w_0)=\Omega(\partial_c\phi_{c},w_0)=0,\label{eqn:orthostability}\end{align} the solution to the linearized evolution equation~\eqref{eqn:linearized} with $w(\cdot,0)=w_0$ admits the bound
    \[\|e^{a(\cdot-ct)}w(t)\|_{\ell^2(\Z;\R^2)}\leq Ke^{-bt}\|e^{a\cdot}w_0\|_{\ell^2(\Z;\R^2)}, \quad t\geq 0.\]
\end{proposition}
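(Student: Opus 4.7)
The plan is to reduce this to a spectral problem for a time-independent generator on an exponentially weighted space, following the Friesecke--Pego strategy. First I would pass to the co-moving frame, viewing the profiles as functions of the continuous coordinate $x=\cdot-ct$, in which the linearized evolution $\dot w=\mathcal{J}\mathcal{L}_{ct,c}w$ corresponds to the autonomous equation $\dot v=\mathcal{A}_c v$ with $\mathcal{A}_c:=\mathcal{J}\mathcal{L}_{0,c}+c\partial_x$. Exponential weights are implemented by conjugating with multiplication by $e^{ax}$, so that the proposition reduces to the assertion that the conjugated operator $\mathcal{A}_c^{(a)}:=e^{ax}\mathcal{A}_c e^{-ax}$ generates a $C_0$-semigroup on $\ell^2(\Z;\R^2)$ whose spectrum lies in $\{\mathrm{Re}\,\lambda\le -b\}$ after quotienting out the two-dimensional generalized kernel spanned by $\partial_\xi\phi_c$ and $\partial_c\phi_c$.

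Second, I would locate the essential spectrum of $\mathcal{A}_c^{(a)}$. Since $r_c$ decays exponentially at a rate exceeding $a$ for $a$ small, the potential part $\mathcal{L}_{0,c}-I$ is multiplication by an exponentially decaying profile, hence a relatively compact perturbation of the constant-coefficient generator $\mathcal{J}+c\partial_x$. By Weyl's theorem the essential spectra of $\mathcal{A}_c^{(a)}$ and $e^{ax}(\mathcal{J}+c\partial_x)e^{-ax}$ coincide, and a direct Fourier calculation in $x$ reveals the latter to be a curve in the open left half-plane, staying inside $\{\mathrm{Re}\,\lambda \le -2b\}$ uniformly in $c\in[c_-,c_+]$ for appropriately chosen $a,b>0$. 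This is the ``supersonic weight'' mechanism: on the unweighted space $\mathcal{J}$ has purely imaginary spectrum, but the weight pairs with the drift $c\partial_x$ at supersonic speed $c>1$ to push the essential spectrum strictly into the left half-plane.

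Third, I would handle the point spectrum in the closed right half-plane. Differentiating~\eqref{eqn:TWE} in $\xi$ and $c$ yields $\mathcal{A}_c\partial_\xi\phi_c=0$ and $\mathcal{A}_c\partial_c\phi_c=\partial_\xi\phi_c$; both vectors decay exponentially by \cite{FPI} and hence lie in the weighted space, forming a Jordan chain of length two at $\lambda=0$. The genuinely hard step is to exclude any additional eigenvalue of $\mathcal{A}_c^{(a)}$ with $\mathrm{Re}\,\lambda\ge -b$. This is the core content of \cite{FPIII,FPIV}: they exploit the skew-adjointness of $\mathcal{J}\mathcal{L}_{0,c}$ with respect to $\Omega$ together with the KdV scaling as $c\downarrow 1$ to reduce the eigenvalue problem, through an Evans-function-type comparison, to the (well-understood) spectral problem for the linearization of the KdV soliton, which has no unstable modes.

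Once the spectrum of $\mathcal{A}_c^{(a)}$ is confined to $\{0\}\cup\{\mathrm{Re}\,\lambda\le -b\}$ with a rank-two generalized kernel, the spectral projection $\mathcal{P}_c$ onto that kernel is bounded, and $\mathrm{Ran}(I-\mathcal{P}_c)$ coincides with $\{w:\Omega(\partial_\xi\phi_c,w)=\Omega(\partial_c\phi_c,w)=0\}$ because the two symplectic functionals are dual to the Jordan basis under $\Omega$. The exponential decay bound then follows from the Gearhart--Pr\"uss theorem applied on this complementary subspace, and uniformity in $c\in[c_-,c_+]$ is obtained from continuity of the spectral projections and resolvent estimates combined with compactness of the parameter interval. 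The main obstacle is unquestionably the spectral exclusion in the third paragraph: ensuring that no hidden eigenvalue sits on or just to the left of the imaginary axis, uniformly across $c\in[c_-,c_+]$, is the technical heart of the Friesecke--Pego program and relies critically on the KdV limit structure near $c=1$.
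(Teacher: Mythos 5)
The paper does not prove this proposition at all: it is imported verbatim from Friesecke--Pego (\cite{FPIII}, Theorem 1.2 and \cite{FPIV}, Theorem 2.2) and used as a black box, so there is no internal proof to compare your attempt against. Judged on its own terms, your sketch is a reasonable road map of the strategy in those references --- the supersonic-weight mechanism pushing the essential spectrum into the left half-plane, the Jordan chain $\mathcal{A}_c\partial_\xi\phi_c=0$, $\mathcal{A}_c\partial_c\phi_c=\partial_\xi\phi_c$, the characterization of the stable complement via the symplectic pairings (consistent with $\Omega(\partial_\xi\phi_c,\partial_c\phi_c)=\alpha_0(c)\neq 0$ in~\eqref{eqn:alpha0>0}), and the identification of the eigenvalue-exclusion step near $c=1$ as the technical core. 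But it is a proof \emph{outline} rather than a proof: the decisive step (no spectrum of the weighted operator in $\{\mathrm{Re}\,\lambda\ge-b\}$ beyond the rank-two block at the origin) is exactly the content you defer back to \cite{FPIII,FPIV}, so the proposal does not add anything beyond the citation the paper already gives.

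Two of your intermediate steps also gloss over genuine difficulties. First, on the lattice one cannot literally pass to an autonomous co-moving frame: the linearization $\mathcal{J}\mathcal{L}_{ct,c}$ in~\eqref{eqn:linearized} is genuinely time-dependent and is only periodic in $t$ modulo an integer shift (the paper itself remarks that $c\partial_x$ ``is not directly represented in the linearization due to the discrete setting''). Friesecke and Pego handle this with a Howland-type Floquet theory, relating the decay of the nonautonomous evolution family to the spectrum of an associated operator on functions of a continuous variable; your reduction to ``$\dot v=\mathcal{A}_c v$ with $\mathcal{A}_c=\mathcal{J}\mathcal{L}_{0,c}+c\partial_x$'' silently changes the underlying function space from $\ell^2(\Z;\R^2)$ to a space of functions on $\R$, and the passage back to the stated estimate for the lattice evolution is not automatic. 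Second, the Weyl/relative-compactness argument and the concluding Gearhart--Pr\"uss step need to be formulated for this advance-delay (or Floquet) setting rather than for a standard differential generator; in particular the spectral mapping issue for the monodromy/evolution family is part of what \cite{FPIII} is written to resolve. Neither point invalidates your outline, but both would have to be confronted before it could count as a proof independent of the cited theorems.
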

Henceforth, we fix $c_-\in (1,c_{+})$ and $a>0$ as above. In \cite{FPI}, the authors compute how the symplectic form $\Omega$ acts on the eigenfunctions. They report that:
\begin{align*}
 \Omega(\partial_\xi\phi_{\xi,c}, \partial_\xi\phi_{\xi,c}) =&0\\
\Omega(\partial_c\phi_{\xi,c},\partial_c\phi_{\xi,c}) =& \alpha_1(c)
\end{align*}
and 
\[ \Omega(\partial_\xi\phi_{\xi,c}, \partial_c\phi_{\xi,c}) = -\Omega(\partial_c\phi_{\xi,c}, \partial_\xi\phi_{\xi,c}) =\alpha_0(c),
\]
where
\begin{align}
\alpha_0(c) := {\frac{1}{c}} {\frac{\d}{\d c}} H(\phi_c)>0 \mand \alpha_1(c) := -\left( {\frac{\d} {\d c}} \int_\R r_c(x) \d x\right) \frac{\d}{\d c} \left(c \int_\R r_c(x) \d x \right).
\label{eqn:alpha0>0}  
\end{align}

\paragraph{Approach}
Motivated by the linear stability theory, we study the decomposition
\begin{align}\label{eqn:ansatz}
    \begin{bmatrix}
        r(j,t)\\
        p(j,t)
    \end{bmatrix}=\begin{bmatrix}
r_{c(t)}(j-\xi(t))\\
p_{c(t)}(j-\xi(t))
    \end{bmatrix}+\begin{bmatrix}
        \eta_r(j,t)\\
        \eta_p(j,t)
    \end{bmatrix},\quad j\in \Z
\end{align}
characterized by the orthogonality conditions
\begin{align}\label{eqn:ortho}\Omega\Big(\partial_\xi\phi_{c(t)}(\cdot-\xi(t)),\eta(t)\Big)=\Omega\Big(\partial_c\phi_{c(t)}(\cdot-\xi(t)),\eta(t)\Big)=0.
\end{align} 
Here, $\eta=(
    \eta_r,\ \eta_p)^\top$ denotes the deviation from the wave manifold 
    \[\mathcal{M}=\{\phi_c(\cdot-\xi)\ |\ \xi \in\R,\ c\in(c_-,c_+)\},\] 
    and avoids the neutral modes of the linearized flow. The result is that we can understand~\eqref{eqn:FPU} through the modulation coordinates $(\xi,c)$, where $\xi$ has the interpretation of wave position and $c$ of wave amplitude (see~\eqref{eqn:approx}). It is furthermore convenient to introduce a phase shift parameter $\gamma(t)$ through 
    \begin{align}\gamma(t)=\xi(t)-\int_0^t c(s) \ \d s.\label{eqn:phaseshift}\end{align}
    In these coordinates, the exact wave solution~\eqref{eqn:exactwave} for $\sigma=0$ takes the form 
    \[\big(\gamma(t),c(t),\eta(t)\big)=(0, c_*,0).\]
    For general $\sigma\geq 0$, the dynamics deviate from this exact solution and are governed by a coupled system of the form
\begin{align}
    \dot{\gamma}(t)=&\Gamma_{\sigma\kappa}(\xi(t),c(t),\eta(t)),\label{eqn:postl}\\
    \dot{c}(t)=&C_{\sigma\kappa}(\xi(t),c(t),\eta(t)),\label{eqn:postc}\\
    \dot{\eta}(t)=&\mathcal{J}\mathcal{L}_{\xi(t),c(t)}\eta(t)+T_{\sigma\kappa}(\xi(t),c(t),\eta(t)).\label{eqn:etadot} 
\end{align}
We study the resulting dynamics through a (formal) expansion of the system in the small parameter $\sigma$ around the deterministic solitary wave. In addition, we provide rigorous results to describe the asymptotic behavior of the expansion functions. We validate our findings through numerical simulations. In summary, we find that quadratic terms in the equation for $\dot{c}$---namely bilinear combinations of $\eta$ and $\kappa$---are the primary cause of the attenuation.

\paragraph{Outline} First, we derive the modulation equations that govern the evolution of $(\gamma,c,\eta)$ in \Cref{sec:modulation}, and present an expansion in $\sigma$ and $\eta$. Then, we introduce an expansion of the modulation system and resulting explicit approximations in \Cref{sec:reduced}. Next, we construct explicit solutions for linear approximations to the tail $\eta$ in \Cref{sec:radiative}. We then derive concrete predictions for the amplitude attenuation in \Cref{sec:attenuation}. Finally, we discuss directions for future research in \Cref{sec:outlook}.

\paragraph{Acknowledgments} Hupkes and Westdorp acknowledge support from the Dutch Research Council (NWO) (grant 613.009.137). Westdorp also thanks Drexel University for its hospitality during a research visit that contributed to this work. Wright acknowledges support from The Simons Foundation (MSP-TSM-00007725).
    
\section{Modulation system}\label{sec:modulation}

We first establish some fundamental properties of the decomposition~\eqref{eqn:ansatz} characterized by the orthogonality conditions~\eqref{eqn:ortho}. Our goal in this section is to derive the mappings $\Gamma_{\sigma\kappa}, C_{\sigma\kappa}$ and $T_{\sigma\kappa}$ in the modulation system~\eqref{eqn:postl}--\eqref{eqn:etadot}. We recall that the phase shift $\gamma(t)$ is related to the position $\xi(t)$ via~\eqref{eqn:phaseshift}, which implies that~\eqref{eqn:postl} is equivalent to
\begin{align}\dot{\xi}(t)=c(t)+\Gamma_{\sigma\kappa}(\xi(t),c(t),\eta(t)).\label{eqn:xidot}\end{align}
We will see that the mapping $T_{\sigma\kappa}$ in~\eqref{eqn:etadot} is given by
\begin{align*}
    T_{\sigma \kappa}(\xi,c,\eta)=\mathcal{J}N[\eta_r,\eta_r]-\Gamma_{\sigma \kappa}(\xi,c,\eta)
\partial_\xi\phi_{\xi,c}-C_{\sigma \kappa}(\xi,c,\eta)
\partial_c\phi_{\xi,c}+\sigma\begin{bmatrix}
     0\\
    \delta_-\big(\kappa (r_{\xi,c}+\eta_r)\big)
\end{bmatrix},
\end{align*}
    where $N[\eta_r,{\eta}_r]$ is the quadratic nonlinearity 
    \[N[\eta_r,\tilde{\eta}_r](j)=\begin{bmatrix}\eta_r(j) \tilde{\eta}_r(j)\\ 0\end{bmatrix}.\]
The mappings $\Gamma_{\sigma\kappa}$ and $C_{\sigma\kappa}$ are given by
\begin{align}
     \begin{bmatrix}
        \Gamma_{\sigma\kappa}(\xi,c,\eta)\\
        C_{\sigma\kappa}(\xi,c,\eta)
    \end{bmatrix}=&(A(c)-B(\xi,c)[\eta])^{-1}\left(\begin{bmatrix}
        \Omega(\partial_\xi\phi_{\xi,c},\mathcal{J}N[\eta_r,\eta_r])\\
        \Omega(\partial_c\phi_{\xi,c},\mathcal{J}N[\eta_r,\eta_r])
    \end{bmatrix}-\sigma\begin{bmatrix}
        \left\langle \partial_\xi r_{\xi,c},
     \kappa (r_{\xi,c}+\eta_r)\right\rangle_{\ell^2(\Z;\R)}\\
        \left\langle\partial_cr_{\xi,c},
     \kappa (r_{\xi,c}+\eta_r)\right\rangle_{\ell^2(\Z;\R)}
    \end{bmatrix}\right),\label{eqn:inverse}
\end{align}
    where $A(c)$ and $B(\xi,c)[\eta]$ are the matrices
\begin{align*}
    A(c)=\begin{bmatrix}
        \Omega(\partial_\xi\phi_c,\partial_\xi\phi_c) & \Omega(\partial_\xi\phi_c,\partial_c\phi_c) \\
        \Omega(\partial_c\phi_c,\partial_\xi\phi_c) & \Omega(\partial_c\phi_c,\partial_c\phi_c)
    \end{bmatrix}=\begin{bmatrix}
        0 & \alpha_0(c)\\
        -\alpha_0(c) & \alpha_1(c)
    \end{bmatrix},
\end{align*}
and
\begin{align}
    B(\xi,c)[\eta]=\begin{bmatrix}
        \Omega(\partial^2_{\xi\xi}\phi_{\xi,c},\eta) & \Omega(\partial^2_{c \xi}\phi_{\xi,c},\eta)\\
        \Omega(\partial^2_{\xi c}\phi_{\xi,c},\eta) & \Omega(\partial^2_{cc}\phi_{\xi,c},\eta)
    \end{bmatrix}.\label{eqn:bmatrix}
\end{align}
We remark that for all $c\in(c_-,c_+)$, the coefficient $\alpha_0(c)$ is strictly positive (see~\eqref{eqn:alpha0>0}), so that $A(c)$ is invertible with inverse
\begin{align}A^{-1}(c)=\frac{1}{\alpha_0^2(c)}\begin{bmatrix}
        \alpha_1(c) & -\alpha_0(c)\\
        \alpha_0(c) & 0
    \end{bmatrix}.\label{eqn:invertible}\end{align}
 Near the wave family $\mathcal{M}$, the unique existence and differentiability of our decomposition are guaranteed by the following result.
\begin{proposition}\label{prop:etamod}  
Assuming \Cref{hyp:coeff}, there exists a constant $\delta_*>0$ such that the following holds for all $T>0$. Let $u(t)=(r(t),p(t))$ be the unique solution to~\eqref{eqn:FPU} with initial condition~\eqref{eqn:initial} on $[0,T]$.  If $u(t)$ satisfies
    \[\inf_{\xi_*\in \R,c_*\in(c_-,c_+)}\|e^{a(\cdot-\xi_*)}\big(u(t)-\phi_{c_*}(\cdot-\xi_*)\big)\|_{\ell^2(\Z;\R^2)}\leq \delta_*, \quad t\in[0,T],\]
    then for each $t\in[0,T]$, there exist unique modulation parameters $(\xi(t),c(t),\eta(t))$ that satisfy~\eqref{eqn:ansatz} with~\eqref{eqn:ortho}. Furthermore, the map $t\mapsto (\xi(t),c(t),\eta(t))$ is differentiable on $[0,T]$ and satisfies the modulation equations~\eqref{eqn:postc},~\eqref{eqn:etadot} and~\eqref{eqn:xidot}.
\end{proposition}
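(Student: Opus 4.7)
The plan is to combine an implicit function theorem argument for the static decomposition with time differentiation to derive the evolution equations. First, I would work in an exponentially weighted $\ell^2$ space and introduce the smooth map
$$F(u,\xi,c) := \begin{bmatrix} \Omega(\partial_\xi \phi_{\xi,c}, u - \phi_{\xi,c}) \\ \Omega(\partial_c \phi_{\xi,c}, u - \phi_{\xi,c}) \end{bmatrix},$$
so that the orthogonality conditions~\eqref{eqn:ortho} with $\eta = u - \phi_{\xi,c}$ are equivalent to $F(u,\xi,c) = 0$. At any wave profile $u = \phi_{\xi,c}$ one has $F = 0$, and a direct computation shows that the Jacobian with respect to $(\xi,c)$ reduces to $-A(c)$, which is invertible uniformly for $c \in [c_-,c_+]$ by the positivity of $\alpha_0(c)$ noted in~\eqref{eqn:alpha0>0}. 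The implicit function theorem therefore yields a unique, smooth local inverse $u \mapsto (\xi(u),c(u))$; smooth dependence of $\phi_c$ on $c$, together with translation equivariance of the weighted set-up (shifting the weight center with $\xi$), lets us take the size of this neighborhood uniform in $(\xi,c)$, producing the uniform $\delta_*$.

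Second, on $[0,T]$ the hypothesis keeps $u(t)$ within weighted distance $\delta_*$ of $\mathcal{M}$ throughout, and continuity of $u(t)$ in this weighted norm allows us to glue together the locally-defined decompositions into a single continuous pair $(\xi(t),c(t))$, with the uniqueness clause of the implicit function theorem ensuring this gluing is unambiguous. Smoothness of $F$ and $C^1$ regularity of $u$ in $t$ imply, via implicit differentiation, that $t \mapsto (\xi(t),c(t),\eta(t))$ is itself $C^1$ on $[0,T]$.

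Third, to extract the evolution equations I would differentiate the two orthogonality identities in time. Using the Hamiltonian form~\eqref{eqn:hamiltonianform} and the Taylor expansion
$$\mathcal{J} H_0'(\phi_{\xi,c} + \eta) = c\,\partial_\xi \phi_{\xi,c} + \mathcal{J}\mathcal{L}_{\xi,c}\eta + \mathcal{J} N[\eta_r, \eta_r],$$
which combines the definition~\eqref{eqn:LH} of $\mathcal{L}_{\xi,c}$ with the travelling-wave identity~\eqref{eqn:TWE}, one computes
$$\dot{\eta} = (c - \dot{\xi})\partial_\xi \phi_{\xi,c} - \dot{c}\,\partial_c \phi_{\xi,c} + \mathcal{J}\mathcal{L}_{\xi,c}\eta + \mathcal{J} N[\eta_r,\eta_r] + \sigma \begin{bmatrix} 0 \\ \delta_-(\kappa(r_{\xi,c}+\eta_r)) \end{bmatrix},$$
which reproduces the claimed expression for $T_{\sigma\kappa}$ once $\dot{\xi} = c + \Gamma$ and $\dot{c} = C$ are substituted. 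Inserting this formula for $\dot\eta$ into the time-differentiated orthogonality conditions yields a $2 \times 2$ linear system for $(\Gamma,C)$ whose coefficient matrix is precisely $A(c) - B(\xi,c)[\eta]$, and inverting produces~\eqref{eqn:inverse}. The main technical obstacle is uniformity: showing that $\delta_*$ can be chosen independently of both the base $(\xi,c)$ and of the random sample $\kappa$ (using $\sigma\alpha < 1$ from \Cref{hyp:coeff}), and that $B(\xi,c)[\eta]$ has sufficiently small operator norm when $\eta$ is $\delta_*$-small in the weighted norm so that $A(c) - B(\xi,c)[\eta]$ remains invertible throughout $[0,T]$ and the formula~\eqref{eqn:inverse} is well-defined everywhere.
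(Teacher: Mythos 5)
Your derivation of the modulation equations follows the paper's proof essentially verbatim: the same Taylor expansion of $\mathcal{J}H_0'(\phi_{\xi,c}+\eta)$ using~\eqref{eqn:LH} and~\eqref{eqn:TWE}, the same formula for $\dot{\eta}$, and the same $2\times 2$ system with coefficient matrix $A(c)-B(\xi,c)[\eta]$. The one place you differ is the static decomposition: the paper simply cites \cite[Proposition 2.2]{FPII} for the unique existence and regularity of the tubular-neighborhood coordinates, whereas you sketch the implicit function theorem argument directly (the map $F$, Jacobian $-A(c)$ on the manifold, invertibility from $\alpha_0(c)>0$, translation equivariance of the weighted norm to get a uniform $\delta_*$). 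That sketch is correct and is essentially the content of the cited Friesecke--Pego result, so you are supplying detail the paper outsources; nothing is lost either way. One computation you gloss over and should spell out: after inserting your formula for $\dot{\eta}$ into the differentiated orthogonality conditions, the right-hand side a priori contains the terms $\Omega(\partial_\xi\phi_{\xi,c},\mathcal{J}\mathcal{L}_{\xi,c}\eta)$ and $\Omega(\partial_c\phi_{\xi,c},\mathcal{J}\mathcal{L}_{\xi,c}\eta)$, which do not appear in~\eqref{eqn:inverse}. They disappear only because, using the self-adjointness of $\mathcal{L}_{\xi,c}$, the skew-symmetry of $\mathcal{J}$, and the traveling-wave equation~\eqref{eqn:TWE}, they equal $-c\,(B_{11},B_{21})^\top$ and cancel against the $B(\xi,c)[\eta]\,(c,0)^\top$ contribution coming from $\dot{\xi}=c+\Gamma$. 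Your assertion that the system has coefficient matrix $A(c)-B(\xi,c)[\eta]$ with right-hand side exactly as in~\eqref{eqn:inverse} is true, but only after this cancellation, which is the one genuinely non-obvious step in the paper's computation.
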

\begin{proof}
The unique existence of the orthogonal decomposition~\eqref{eqn:ansatz} with~\eqref{eqn:ortho} near the wave manifold is proved in \cite[Proposition 2.2]{FPII}. We derive~\eqref{eqn:etadot} by differentiating~\eqref{eqn:ansatz} with respect to time:
\begin{align}
   \dot{\eta}(t)=\mathcal{J}H_0'(\phi_{\xi(t),c(t)}+\eta(t))-\dot{\xi}(t)
\partial_\xi\phi_{\xi(t),c(t)}-\dot{c}(t)
\partial_c\phi_{\xi(t),c(t)}+\sigma \mathcal{J} \begin{bmatrix}
     \kappa (r_{\xi(t),c(t)}+\eta_r(t))\\
    0
\end{bmatrix}
\label{eqn:doteta}
\end{align}
where
\begin{align*}\mathcal{J}H_0'(\phi_{\xi,c}+\eta)=&\mathcal{J}\begin{bmatrix}
        r_{\xi,c}+\eta_r + (r_{\xi,c}+\eta_r)^2\\
        p_{\xi,c}+\eta_p 
    \end{bmatrix}\\
    =&\mathcal{J}\begin{bmatrix}
        r_{\xi,c} + r_{\xi,c}^2\\
        p_{\xi,c}
    \end{bmatrix}+\mathcal{J}\begin{bmatrix}
        \eta_r + 2r_{\xi,c}\eta_r\\
        \eta_p 
    \end{bmatrix}+\mathcal{J}\begin{bmatrix}
        \eta_r^2\\0
    \end{bmatrix}\\
    =&c\partial_\xi\phi_{\xi,c}+\mathcal{J}\mathcal{L}_{\xi,c}\eta+\mathcal{J}N[\eta_r,\eta_r].
\end{align*}  
To derive~\eqref{eqn:postl} and~\eqref{eqn:postc}, we differentiate~\eqref{eqn:ortho}:
\begin{align}
    \begin{bmatrix}
        \Omega(\partial_\xi\phi_{\xi(t),c(t)},\dot{\eta}(t))\\
        \Omega(\partial_c\phi_{\xi(t),c(t)},\dot{\eta}(t))
    \end{bmatrix}+B(\xi(t),c(t))[\eta(t)]\begin{bmatrix}
        \Gamma_\kappa(\sigma,\xi(t),c(t),\eta(t))+c(t)\\
        C_{\kappa}(\sigma,\xi(t),c(t),\eta(t))
    \end{bmatrix}=0.\label{eqn:cancel}
\end{align}
We furthermore compute
\begin{align*}
     \begin{bmatrix}
        \Omega\Big(\partial_\xi\phi_{\xi(t),c(t)},\dot{\eta}(t)\Big)\\
        \Omega\Big(\partial_c\phi_{\xi(t),c(t)},\dot{\eta}(t)\Big)
    \end{bmatrix}\stackrel{\eqref{eqn:doteta}}{=}&\begin{bmatrix}
        \Omega\Big(\partial_\xi\phi_{\xi(t),c(t)},\mathcal{J}\mathcal{L}_{\xi(t),c(t)}\eta(t)+\mathcal{J}N[\eta_r(t),\eta_r(t)]\Big)\\
        \Omega\Big(\partial_c\phi_{\xi(t),c(t)},\mathcal{J}\mathcal{L}_{\xi(t),c(t)}\eta(t)+\mathcal{J}N[\eta_r(t),\eta_r(t)]\Big)
    \end{bmatrix}-A(c(t))\begin{bmatrix}
        \dot{\gamma}(t)\\
        \dot{c}(t)
    \end{bmatrix} \\
    &+\sigma \begin{bmatrix}
        \Omega\left(\partial_\xi\phi_{\xi(t),c(t)},\mathcal{J}\begin{bmatrix}
     \kappa (r_{\xi(t),c(t)}+\eta_r(t))\\
    0\end{bmatrix}\right)\\
        \Omega\left(\partial_c\phi_{\xi(t),c(t)},\mathcal{J}\begin{bmatrix}
     \kappa (r_{\xi(t),c(t)}+\eta_r(t))\\
    0\end{bmatrix}\right)
    \end{bmatrix}.
\end{align*}
The term linear in $\eta$ can be rewritten as
\begin{align*}
    \begin{bmatrix}
        \Omega(\partial_\xi\phi_{\xi,c},\mathcal{J}\mathcal{L}_{\xi,c}\eta)\\
        \Omega(\partial_c\phi_{\xi,c},\mathcal{J}\mathcal{L}_{\xi,c}\eta)
    \end{bmatrix}=&-\begin{bmatrix}
        \langle\partial_\xi\phi_{\xi,c},\mathcal{L}_{\xi,c}\eta\rangle_{\ell^2(\Z;\R^2)}\\
        \langle\partial_c\phi_{\xi,c},\mathcal{L}_{\xi,c}\eta\rangle_{\ell^2(\Z;\R^2)}   \end{bmatrix}\stackrel{\eqref{eqn:LH}}{=}-\begin{bmatrix}
            \langle \partial_\xi H_0'(\phi_{\xi,c}),\eta\rangle_{\ell^2(\Z;\R^2)}\\
            \langle \partial_c H_0'(\phi_{\xi,c}),\eta\rangle_{\ell^2(\Z;\R^2)}
        \end{bmatrix}\\=&-\begin{bmatrix}
            \Omega (\mathcal{J}\partial_\xi H_0'(\phi_{\xi,c}),\eta)\\
            \Omega (\mathcal{J}\partial_c H_0'(\phi_{\xi,c}),\eta)
        \end{bmatrix}
        \stackrel{\eqref{eqn:TWE}}{=}-c\begin{bmatrix}
        \Omega(\partial_{\xi\xi}^2\phi_{\xi,c},\eta)\\
            \Omega(\partial_{\xi c}^2\phi_{\xi,c},\eta)
        \end{bmatrix}=-c\begin{bmatrix}
            B_{11}(\xi,c,\eta)\\
            B_{21}(\xi,c,\eta)
        \end{bmatrix},
\end{align*}
and cancels with $B(\xi,c)[\eta](c,\ 0)^\top$ in~\eqref{eqn:cancel}.
The $\mathcal{O}(\sigma)$ term reduces to
\begin{align*}
    \begin{bmatrix}
        \Omega\left(\partial_\xi\phi_{\xi,c},\mathcal{J}\begin{bmatrix}
     \kappa (r_{\xi,c}+\eta_r)\\
    0\end{bmatrix}\right)\\
        \Omega\left(\partial_c\phi_{\xi,c},\mathcal{J}\begin{bmatrix}
     \kappa (r_{\xi,c}+\eta_r)\\
    0\end{bmatrix}\right)
    \end{bmatrix}=&-\begin{bmatrix}
        \Omega\left(\mathcal{J}\partial_\xi\phi_{\xi,c},\begin{bmatrix}
     \kappa (r_{\xi,c}+\eta_r)\\
    0\end{bmatrix}\right)\\
        \Omega\left(\mathcal{J}\partial_c\phi_{\xi,c},\begin{bmatrix}
     \kappa (r_{\xi,c}+\eta_r)\\
    0\end{bmatrix}\right)
    \end{bmatrix}\\
    =&-\begin{bmatrix}
        \left\langle \partial_\xi\phi_{\xi,c},\begin{bmatrix}
     \kappa (r_{\xi,c}+\eta_r)\\
    0\end{bmatrix}\right\rangle_{\ell^2(\Z;\R)}\\
        \left\langle\partial_c\phi_{\xi,c},\begin{bmatrix}
     \kappa (r_{\xi,c}+\eta_r)\\
    0\end{bmatrix}\right\rangle_{\ell^2(\Z;\R)}
    \end{bmatrix}\\
    =&-\begin{bmatrix}
        \left\langle \partial_\xi r_{\xi,c},
     \kappa (r_{\xi,c}+\eta_r)\right\rangle_{\ell^2(\Z;\R)}\\
        \left\langle\partial_cr_{\xi,c},
     \kappa (r_{\xi,c}+\eta_r)\right\rangle_{\ell^2(\Z;\R)}
    \end{bmatrix}.
\end{align*}
Finally, we note
\begin{align*}
\Big(A(c(t))-B(\xi(t),c(t))&[\eta(t)]\Big) \begin{bmatrix}
        \Gamma_\kappa(\sigma,\xi(t),c(t),\eta(t))\\
        C_\kappa(\sigma,\xi(t),c(t),\eta(t))
    \end{bmatrix}\\=&\begin{bmatrix}
        \Omega\Big(\partial_\xi\phi_{\xi(t),c(t)},\mathcal{J}N[\eta_r(t),\eta_r(t)]\Big)\\
        \Omega\Big(\partial_c\phi_{\xi(t),c(t)},\mathcal{J}N[\eta_r(t),\eta_r(t)]\Big)
    \end{bmatrix}\\
    &-\sigma\begin{bmatrix}
        \Big\langle \partial_\xi r_{\xi(t),c(t)},
     \kappa (r_{\xi(t),c(t)}+\eta_r(t))\Big\rangle_{\ell^2(\Z;\R)}\\
        \Big\langle\partial_cr_{\xi(t),c(t)},
     \kappa (r_{\xi(t),c(t)}+\eta_r(t))\Big\rangle_{\ell^2(\Z;\R)}
    \end{bmatrix}.
\end{align*}
Following~\eqref{eqn:invertible}, the invertibility of $A(c)-B(\xi,c)[\eta]$ is guaranteed for $c\in(c_-,c_+)$ and $\eta$ satisfying $\|e^{a\cdot}\eta\|_{\ell^2(\Z;\R^2)}\leq \delta_*$, upon decreasing $\delta_*$ if necessary. 
\end{proof}

\paragraph{Expansion}
The modulation system~\eqref{eqn:postl}--\eqref{eqn:etadot} describes how a small heterogeneity $\sigma>0$ affects the exact traveling wave $\big(\gamma(t),c(t),\eta(t)\big)=(0, c_*,0)$. It introduces $\mathcal{O}(\sigma)$ fluctuations in $c(t)$, and leads to an $\mathcal{O}(\sigma)$ phase shift $\gamma(t)$. The heterogeneity also develops an $\mathcal{O}(\sigma)$ tail $\eta(t)$, which we expect to remain small for a long time, due to the stabilizing effect of the linearized dynamics. In order to expose the structure of the coupled modulation system~\eqref{eqn:postl}--\eqref{eqn:etadot}, we expand the system in the parameters $\sigma$ and $\eta$:
 \begin{align}
\Gamma_{\sigma\kappa}(\xi,c,\eta)=&\sigma\big(\Gamma^{1,0}(\xi,c)[\kappa]+\Gamma^{1,1}(\xi,c)[\kappa,\eta]\Big)+\Gamma^{0,2}(\xi,c)[\eta_r,\eta_r]+\mathcal{O}(\sigma\eta^2)+\mathcal{O}(\eta^3),\nonumber\\
C_{\sigma\kappa}(\xi,c,\eta)=&\sigma\big(C^{1,0}(\xi,c)[\kappa]+C^{1,1}(\xi,c)[\kappa,\eta]\Big)+C^{0,2}(\xi,c)[\eta_r,\eta_r]+\mathcal{O}(\sigma\eta^2)+\mathcal{O}(\eta^3),\label{eqn:expansion}\\
T_{\sigma\kappa}(\xi,c,\eta)=&\sigma\big(T^{1,0}(\xi,c)[\kappa]+T^{1,1}(\xi,c)[\kappa,\eta]\Big)+T^{0,2}(\xi,c)[\eta_r,\eta_r]+\mathcal{O}(\sigma\eta^2)+\mathcal{O}(\eta^3).\nonumber
\end{align}
Here, the first index $k\in\{0,1\}$ in $\Gamma^{k,n}$, $C^{k,n}$ and $T^{k,n}$ refers to the accompanying factor $\sigma^k$ in~\eqref{eqn:expansion}, whereas  the second index $n\geq0$ indicates that the mapping is purely of order $\mathcal{O}(\eta^n)$. To 
compute these expansion terms,
we note that the matrix inverse in~\eqref{eqn:inverse} can be expanded through a Neumann series:
\begin{align*}
    \big(A(c)-B(\xi,c)[\eta]\big)^{-1}=&\big(I-A^{-1}(c)B(\xi,c)[\eta]\big)^{-1}A^{-1}(c)\\
    =&\sum_{k=0}^\infty\big(A^{-1}(c)B(\xi,c)[\eta]\big)^kA^{-1}(c).
\end{align*}
For each $\xi\in\R$ and $c\in(c_-,c_+)$, the maps $\Gamma^{1,0}(\xi,c)$ and $C^{1,0}(\xi,c)$ are linear operators from $\ell^\infty(\Z;\R)$ to $\R$ that act as
\begin{align}\begin{bmatrix}
         \Gamma^{1,0}(\xi,c)\\
         C^{1,0}(\xi,c)
     \end{bmatrix}[f]=&- \tfrac{1}{2}A^{-1}(c)\begin{bmatrix}
       \langle \partial_\xi r^2_{c}(\cdot-\xi),f\rangle_{\ell^2(\Z;\R)}\\
        \langle \partial_cr^2_{c}(\cdot-\xi),f\rangle_{\ell^2(\Z;\R)}
    \end{bmatrix}.\label{eqn:Ls}\end{align}
At order $\mathcal{O}(\eta^2)$, we find that $\Gamma^{0,2}(\xi,c)[\cdot,\cdot]$ and $C^{0,2}(\xi,c)[\cdot,\cdot]:$ are the bilinear maps from $\ell^\infty(\Z;\R)\times \ell^\infty(\Z;\R)$ to $\R$ that act as
\begin{align}
\begin{bmatrix}
         \Gamma^{0,2}(\xi,c)\\
         C^{0,2}(\xi,c)
     \end{bmatrix}[f,g]:=&-A^{-1}(c)\begin{bmatrix}
        \langle\partial_\xi r_{c}(\cdot-\xi),fg\rangle_{\ell^2(\Z;\R)}\\
        \langle \partial_cr_{c}(\cdot-\xi),fg\rangle_{\ell^2(\Z;\R)}
    \end{bmatrix}.
\label{eqn:bilinear1}\end{align}
In particular, 
$\Gamma^{0,2}(\xi,c)[\cdot,\cdot]$ and $C^{0,2}(\xi,c)[\cdot,\cdot]$ 
can be seen as weighted inner products when restricted to 
$\ell^2(\Z;\R) \times \ell^2(\Z;\R)$. Lastly, $\Gamma^{1,1}(\xi,c)[\cdot,\cdot]$ and $C^{1,1}(\xi,c)[\cdot,\cdot]$ are bilinear maps from $\ell^\infty(\Z;\R)\times\ell^\infty(\Z;\R^2)$ to $\R$ which act as 
\begin{align}\begin{bmatrix}
         \Gamma^{1,1}(\xi,c)\\
         C^{1,1}(\xi,c)
     \end{bmatrix}[f,h]=A^{-1}(c)B(\xi,c)[h]\begin{bmatrix}
         \Gamma^{1,0}(\xi,c)\\
         C^{1,0}(\xi,c)
     \end{bmatrix}[f]+\begin{bmatrix}
         \Gamma^{0,2}(\xi,c)\\
         C^{0,2}(\xi,c)
     \end{bmatrix}[f,h_r].\label{eqn:bilinear2}\end{align}
     The expansion of~\eqref{eqn:postl}--\eqref{eqn:etadot} up to second-order is completed by the linear map $T^{1,0}(\xi,c):\ell^\infty(\Z;\R)\to\ell^2(\Z;\R^2)$ that acts as
 \begin{align} T^{1,0}(\xi,c)[f]=& -\Gamma^{1,0}(\xi,c)[f]
\partial_\xi\phi_{\xi,c}-C^{1,0}(\xi,c)[f]
\partial_c\phi_{\xi,c}+ \begin{bmatrix}
     0\\
    \delta_-\big(f r_{\xi,c}\big)
\end{bmatrix},\label{eqn:Tk}
\end{align}
and the bilinear maps 
\[T^{1,1}(\xi,c)[\cdot,\cdot]:\ell^\infty(\Z;\R)\times\ell^\infty(\Z;\R^2)\to\ell^2(\Z;\R^2)\] and
\[T^{0,2}(\xi,c)[\cdot,\cdot]:\ell^\infty(\Z;\R)\times\ell^\infty(\Z;\R)\to\ell^2(\Z;\R^2)\]
that act as
\begin{align*}
 T^{1,1}(\xi,c)[f,h]=&-\Gamma^{1,1}(\xi,c)[f,h]
\partial_\xi\phi_{\xi,c}-C^{1,1}(\xi,c)[f,h]
\partial_c\phi_{\xi,c}+ \begin{bmatrix}
     0\\
    \delta_-\big(f h_r\big)
\end{bmatrix},\\
T^{0,2}(\xi,c)[f,g]=&
 -\Gamma^{0,2}(\xi,c)[f,g]
\partial_\xi\phi_{\xi,c}-C^{0,2}(\xi,c)[f,g]
\partial_c\phi_{\xi,c}+\mathcal{J}N[f,g].\end{align*}
Although higher-order contributions can be identified in the same manner, we will only make use of the $\mathcal{O}(\sigma)$, $\mathcal{O}(\sigma\eta)$ and $\mathcal{O}(\eta^2)$ contributions identified above. This is because they suffice for our purposes of studying the $\mathcal{O}(\sigma^2)$ amplitude attenuation of $c(t)$, given that $\eta$ develops at order $\mathcal{O}(\sigma)$. See also \Cref{fig:sigmas}. In the following section, we construct explicit approximations based on these second-order terms to facilitate further analysis.

\section{System expansion}\label{sec:reduced}
Having derived the modulation system~\eqref{eqn:postl}--\eqref{eqn:etadot}, we turn our attention to the resulting dynamics. We emphasize that the modulation parameters $(\gamma,c,\eta)$ are, through a coordinate transformation, equivalent to the original system $(r,p)$. Hence, solving for $c(t)$ should reveal the observed amplitude attenuation. However, due to its coupled nature, this information is hard to extract directly from~\eqref{eqn:postl}--\eqref{eqn:etadot}.

Inspired by \cite{westdorp}, this section introduces explicit approximations based on the expansion~\eqref{eqn:expansion}. The key issue is that the expansion coefficients $\Gamma^{k,n}$, $C^{k,n}$ and $T^{k,n}$ depend explicitly on the pair $(\xi, c)$. At each step in the expansion procedure one can choose to treat this pair as free functions that must be solved for, but it is also possible to insert an a-priori approximation. The approximations developed in \cite{westdorp} to describe soliton propagation in the KdV equation were based on the former approach. The main reason is that the random forcing that was considered broke the Hamiltonian structure of the system. In particular, 
fluctuations appearing at order $\mathcal{O}(\sigma)$ lead to significant deviations from the original amplitude on short time scales, requiring the use of an adaptive approach to account for the short-time fluctuations.

In this work however, the random coefficients in~\eqref{eqn:FPU} preserve the Hamiltonian structure of the FPUT system. As a consequence, the dynamics of $c(t)$ induced by terms at $\mathcal{O}(\sigma)$ do not accumulate significantly over time---see \Cref{prop:variance} ahead. In the present context, $c(t)$ remains close to its initial value $c_*$ over time scales proportional to $\sigma^{-2}$, leading to a separation of time scales. In particular, transient dynamics such as the development of the tail $\eta(t)$ occur at an exponential rate independent of $\sigma$---see \Cref{cor:limit} ahead. We therefore depart from an adaptive approach, and simply expand the modulation system~\eqref{eqn:postl}--\eqref{eqn:etadot} in the small parameter $\sigma$, around the deterministically  propagating soliton $\phi_{c_*}(\cdot-c_*t)$:
\begin{align}
    \gamma(t)=&\sigma\gamma_1(t)+\sigma^2\gamma_2(t)+\mathcal{O}(\sigma^3),\label{eqn:expgamma}\\
    c(t)=&c_*+\sigma c_1(t)+\sigma^2c_2(t)+\mathcal{O}(\sigma^3),\label{eqn:expc}\\
    \eta(t)=&\sigma \eta_1(t)+\mathcal{O}(\sigma^2).\label{eqn:expeta}
\end{align}
Since the phase shift $\gamma(t)$ is related to the position $\xi(t)$ through~\eqref{eqn:phaseshift}, we have an equivalent expansion
\[\xi(t)=c_*t+\sigma \xi_1(t)+\sigma^2\xi_2(t)+\mathcal{O}(\sigma^3),\]
with
\[\xi_i(t)=\gamma_i(t)+\int_0^tc_i(s) \ \d s, \qquad i=1,2.\]
In principle, the expansion can be continued up to any desired order in $\sigma$, but we will only  consider the explicit terms appearing in~\eqref{eqn:expgamma}--\eqref{eqn:expeta}. In \Cref{sec:attenuation} ahead, we  compute the expected attenuation of the amplitude approximation $c_*+\sigma c_1(t)+\sigma^2c_2(t)$. Although $c(t)$ deviates significantly from its initial value after long times, the expansion captures the attenuation occurring over intermediate timescales. In this sense, the attenuation computed from an expansion around $\phi_{c_*}(\cdot-c_*t)$ can be interpreted as representing tangents to the curves in \Cref{fig:sigmas} at $\mathbf{E}[c(t)]=c_*$. See \Cref{fig:sigmastangents+ODE} in \Cref{sec:attenuation} for further details.

\paragraph{leading-order phase and amplitude}
Collecting $\mathcal{O}(\sigma)$ terms in~\eqref{eqn:postl} and~\eqref{eqn:postc} yields 
\begin{align}\begin{bmatrix}
        \dot{\gamma}_{1}(t)\\
        \dot{c}_{1}(t)
    \end{bmatrix}=\begin{bmatrix}
        \Gamma^{1,0}(c_*t,c_*)\\
        C^{1,0}(c_*t,c_*)
    \end{bmatrix}[\kappa],\quad \text{with}\quad \begin{bmatrix}
        {\gamma}_{1}(0)\\
        {c}_{1}(0)
    \end{bmatrix}=\begin{bmatrix}
        0\\
        0
    \end{bmatrix}.\label{eqn:c1}\end{align}
These leading-order terms ignore $\eta$ altogether, since it forms an $\mathcal{O}(\eta^2)\sim\mathcal{O}(\sigma^2)$ contribution. We note that
\begin{align*}
        \begin{bmatrix}
        \Gamma^{1,0}(c_*t,c_*)\\
        C^{1,0}(c_*t,c_*)
    \end{bmatrix}[\kappa]=-\tfrac{1}{2}A^{-1}(c_*)\sum_{j\in\Z}\kappa(j)\begin{bmatrix}
        (\partial_\xi r^2_{c_*})(j-c_*t)\\
        (\partial_cr^2_{c_*})(j-c_*t)
    \end{bmatrix},
\end{align*}
through which we arrive at the explicit solution 
\begin{align}
c_{1}(t)=&  \tfrac{1}{2}\alpha_0^{-1}(c_*)\sum_{j\in\Z}\kappa(j)\int_0^t (r^2_{c_*})'(j-c_*s)\d s\nonumber \\
=& - \frac{\alpha_0^{-1}(c_*)}{2c_*}\sum_{j\in\Z}\kappa(j)[r^2_{c_*}(j-c_*t)-r^2_{c_*}(j)].\label{eqn:c1explicit}\end{align}
The bracketed terms in~\eqref{eqn:c1explicit} form (shifting) windows that select the lattice sites at which the random coefficients contribute. Similarly, we compute that $\gamma_{1}(t)= \gamma_{1;I}(t)+\gamma_{1;II}(t)$ with 
\begin{align}
\gamma_{1;\RomanI}(t)=& 
- \frac{\alpha^{-2}_0(c_*)\alpha_1(c_*)}{2c_*}\sum_{j\in\Z}\kappa(j)[r^2_{c_*}(j-c_*t)-r^2_{c_*}(j)]\label{eqn:Y1}\end{align}
 and
 \begin{align}
\gamma_{1;\RomanII}(t)=& \frac{1}{2\alpha_0(c_*)}\sum_{j\in\Z}\kappa(j)\int_0^t\partial_c r^2_{c_*}(j-c_*s)\d s.\label{eqn:Y2}\end{align}
We note that
\[\int_{-\infty}^x\partial_c r^2_{c_*}(y)\d y\to \int_\R \partial_c r^2_{c_*}(y)\d y>0,\quad \text{as }x \to \infty.\]
Thus, the integral in~\eqref{eqn:Y2} acts as a widening window, selecting an increasing amount of lattice sites with time. Hence, $\gamma_{1;II}$ resembles a continuous random walk, and its variance grows linearly with time. Below, we collect some elementary properties regarding the statistics of $c_1(t)$ and $\gamma_1(t)$.
\begin{proposition}[See \Cref{app:statistics}]\label{prop:variance}
\par\noindent
Assume \Cref{hyp:coeff} and let $c_*\in(c_-,c_+)$. Then
    \begin{enumerate}
        \item $c_{1}(t)$ is a mean-zero random process, whose variance is uniformly bounded in time as
         \begin{align}
         \mathbb{E}[c_{1}^2(t)]\leq  \frac{\alpha_0^{-2}(c_*)}{c_*^2}\|r_{c_*}\|_{\ell^4(\Z;\R)}^4,\quad t\geq 0.\label{eqn:variancebound}\end{align}
        \item $\gamma_{1}(t)$ is a mean-zero random process, whose variance is bounded linearly in time as
         \[\mathbb{E}[\gamma_{1}^2(t)]\leq \big(C_1 + C_2t\big),\quad t\geq0,\]
         where the constants $C_1, C_2>0$ depend only on $c_*$. 
    \end{enumerate}
\end{proposition}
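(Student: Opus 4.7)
The plan is to exploit the explicit formulas \eqref{eqn:c1explicit}, \eqref{eqn:Y1}, \eqref{eqn:Y2}, each of which expresses one of $c_1(t)$, $\gamma_{1;\RomanI}(t)$ and $\gamma_{1;\RomanII}(t)$ as a linear combination $\sum_{j\in\Z}\kappa(j)w_j(t)$ of the i.i.d.\ sequence against deterministic weights $w_j(t)$. The mean-zero claims are then immediate from $\mathbb{E}[\kappa(j)]=0$ and linearity, while the variance estimates reduce, by independence and $\mathbb{E}[\kappa(j)^2]=1$, to computing $\sum_{j\in\Z}w_j(t)^2$ in each case.

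For the first item, squaring \eqref{eqn:c1explicit} and taking expectation yields
\[
\mathbb{E}[c_1^2(t)] = \frac{\alpha_0^{-2}(c_*)}{4c_*^2}\sum_{j\in\Z}\bigl[r_{c_*}^2(j-c_*t)-r_{c_*}^2(j)\bigr]^2.
\]
Applying $(a-b)^2\leq 2(a^2+b^2)$ and bounding $\sum_j r_{c_*}^4(j-\xi)$ uniformly in $\xi\in\R$ by $\|r_{c_*}\|_{\ell^4(\Z;\R)}^4$, using the exponential decay of $r_{c_*}$ so that the discrete sum is controlled independently of the continuous shift $c_*t$, delivers the stated bound.

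For the second item, I would first compare \eqref{eqn:Y1} with \eqref{eqn:c1explicit} to read off the proportionality $\gamma_{1;\RomanI}(t) = \tfrac{\alpha_1(c_*)}{\alpha_0(c_*)}\, c_1(t)$, so that $\mathbb{E}[\gamma_{1;\RomanI}^2(t)]$ inherits the uniform bound from item 1. For $\gamma_{1;\RomanII}$, the substitution $u=j-c_*s$ in the time integral of \eqref{eqn:Y2} recasts it as the telescoping difference
\[
\int_0^t \partial_c r_{c_*}^2(j-c_*s)\,\d s \;=\; \frac{1}{c_*}\bigl[G(j)-G(j-c_*t)\bigr], \qquad G(x):=\int_{-\infty}^x \partial_c r_{c_*}^2(y)\,\d y,
\]
so that independence gives
\[
\mathbb{E}[\gamma_{1;\RomanII}^2(t)] = \frac{1}{4c_*^2\alpha_0^2(c_*)}\sum_{j\in\Z}\bigl[G(j)-G(j-c_*t)\bigr]^2.
\]

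The main obstacle is bounding this last sum, since $G$ does \emph{not} decay at $+\infty$: it tends to the finite nonzero constant $G_\infty := \int_\R \partial_c r_{c_*}^2(y)\,\d y$, so $G$ itself is not $\ell^2$-summable and a naive shift-invariance or convolution argument fails. The key is that exponential decay of $r_{c_*}$ and $\partial_c r_{c_*}$ (cf.~\cite{FPI}) forces $G(x)\to 0$ as $x\to -\infty$ and $G(x)\to G_\infty$ as $x\to +\infty$ at exponential rates. I would split the sum into three regions: (i) $j\leq 0$, where both $G(j)$ and $G(j-c_*t)$ are exponentially small in $|j|$ and contribute an $\mathcal{O}(1)$ term; (ii) $j\geq c_*t$, where both are exponentially close to $G_\infty$ and their difference decays exponentially in $j-c_*t$, again contributing $\mathcal{O}(1)$; (iii) $0<j<c_*t$, the transition window of length $\mathcal{O}(t)$, where $|G(j)-G(j-c_*t)|$ is uniformly bounded by essentially $|G_\infty|$ and hence contributes $\mathcal{O}(t)$. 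Combining the three regions yields the claimed bound $C_1+C_2\, t$, with constants depending only on $c_*$ through $G_\infty$ and the exponential decay rate $a$.
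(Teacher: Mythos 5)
Your proposal is correct and follows essentially the same route as the paper's proof in \Cref{app:statistics}: exploit the explicit representations as $\kappa$-linear sums, use independence to reduce the variance to a sum of squared deterministic weights, apply $(a-b)^2\le 2(a^2+b^2)$ for the uniform bound on $c_1$ and $\gamma_{1;\RomanI}$, and split the widening-window term for $\gamma_{1;\RomanII}$ into the two exponentially controlled outer regions plus the $\mathcal{O}(t)$ transition window (your antiderivative $G(j)-G(j-c_*t)$ is exactly the integral $\int_{j-c_*t}^{j}\partial_c r_{c_*}^2(y)\,\d y$ that the paper bounds case by case). The only cosmetic difference is that you obtain the $\gamma_{1;\RomanI}$ bound by observing its proportionality to $c_1$ rather than repeating the computation.
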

\Cref{fig:zero order} shows simulations of the first-order parameters $(c_{1}(t), \gamma_{1}(t))$. The approximation $c_*+\sigma c_{1}(t)$ captures the leading-order effect of the random coefficients: the solitary wave experiences an $\mathcal{O}(\sigma)$ `shaking', but the disturbances do not accumulate with time. See the left panel of \Cref{fig:zero order}: The approximation $c_*+\sigma c_{1}(t)$ does not capture the attenuation of the soliton. 
\begin{figure}[htbp]
    \centering
    \begin{subfigure}[t]{0.48\textwidth}
        \centering
        \includegraphics[width=\linewidth]{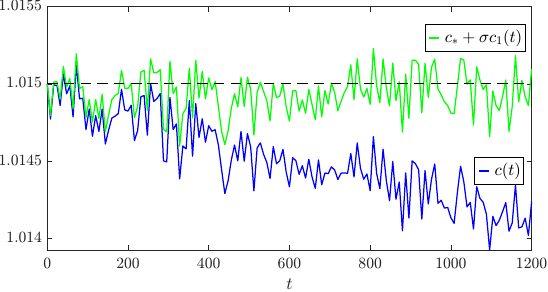}
    \end{subfigure}
    \hfill
    \begin{subfigure}[t]{0.48\textwidth}
        \centering
        \includegraphics[width=\linewidth]{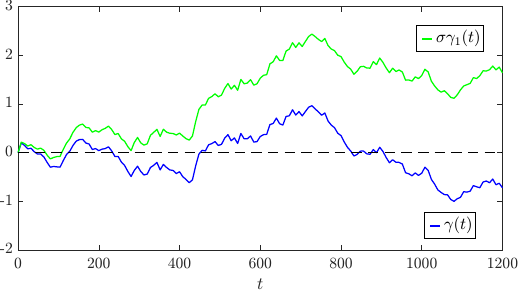}
    \end{subfigure}

    \caption{Comparison of first-order approximations $c_*+\sigma c_1(t)$ and $\sigma \gamma_1(t)$ of amplitude and position, for a particular realization with $\sigma=0.07$ and $c_*=1.015$. The random coefficients $\kappa(i)$ are drawn from a uniform distribution on $[-\sqrt{3},\sqrt{3}]$. The parameters ${c}_{1}(t)$ and $\gamma_{1}(t)$ correspond to a numerical simulation of the approximation~\eqref{eqn:c1}. These processes capture fluctuations, but do not capture the attenuation affecting $c(t)$.}
    \label{fig:zero order}
\end{figure}

\paragraph{Tail approximations}

Analogously, we identify the leading-order expansion term $\sigma \eta_{1}$ of $\eta$ by isolating $\mathcal{O}(\sigma)$ contributions in~\eqref{eqn:etadot}. In particular, we find 
\begin{align}\dot{\eta}_{1}(t)=&\mathcal{J}\mathcal{L}_{c_* t,c_*}\eta_{1}(t)+ T^{1,0}(c_* t,c_*)[\kappa]\label{eqn:etalin}\\
=& \mathcal{J}\mathcal{L}_{c_*t,c_*}\eta_{1}(t)+\begin{bmatrix}
      \partial_\xi\phi_{c_*t,c_*} \\
      \partial_c\phi_{c_*t,c_*}
    \end{bmatrix}^\top  A^{-1}(c_*)\begin{bmatrix}
        \left\langle \partial_\xi r_{c_*t,c_*},
     \kappa r_{c_*t,c_*}\right\rangle_{\ell^2(\Z;\R)}\\
        \left\langle\partial_cr_{c_*t,c_*},
     \kappa r_{c_*t,c_*}\right\rangle_{\ell^2(\Z;\R)}
    \end{bmatrix}
+\sigma \mathcal{J}\begin{bmatrix}
     \kappa r_{c_*t,c_*}\\
    0
\end{bmatrix},\nonumber
\end{align}
supplemented with the initial condition $\eta_{1}(0,j)=0$, $j\in\Z$. This is now a linear equation, forced by the random coefficients $\kappa$. We recall that $\mathcal{L}_{c_*t,c_*}$ is the linearization around the solitary wave $\phi_{c_*}(\cdot-c_*t)$, that acts as
\begin{align}\mathcal{L}_{c_*t,c_*}\eta=\begin{bmatrix}
        \eta_r \\
        \eta_p 
    \end{bmatrix}+ 2\begin{bmatrix}
r_{c_*}(\cdot-c_*t)\eta_r\\
0\end{bmatrix}.\label{eqn:localized}\end{align}
These time-dependent operators generate an exponentially stable evolution in weighted spaces (\Cref{prop:linearstability}), provided the neutral modes are avoided via the orthogonality conditions~\eqref{eqn:orthostability}. The term $ T^{1,0}$ introduces forcing to~\eqref{eqn:etalin} which preserves orthogonality with respect to $\phi_{c_*}(\cdot-c_*t)$. In particular,
\begin{align}\Omega\Big(\partial_\xi\phi_{c_*}(\cdot-c_* t),\eta_{1}(t)\Big)=\Omega\Big(\partial_c\phi_{c_*}(\cdot-c_*t),\eta_{1}(t)\Big)=0,\quad t\geq0. \label{eqn:etalinortho}\end{align}

Additionally, we consider a cruder `homogeneous' linear approximation of the tail $\eta(t)$ that neglects the localized linear term:
\begin{align}\label{eqn:etah}\dot{\eta}_1^{\mathrm{h}}(t)=\mathcal{J}\eta_1^{\mathrm{h}}(t)+\sigma T^{1,0}(c_* t,c_*)[\kappa].\end{align}
In comparison to~\eqref{eqn:etalin}, the linear operator $\mathcal{L}_{c_*t,c_*}$ has been replaced by $I$. This simplifies the analysis:~\eqref{eqn:etah} is a forced discrete wave equation, for which we can derive explicit solution formulas (\Cref{sec:radiative}). Despite the seemingly large difference between the definitions~\eqref{eqn:etalin} and~\eqref{eqn:etah}, the resulting `tails' do not differ much from each other. See \Cref{fig:relative}, which shows the difference $\|\eta_{1}(t)-\eta_1^{\mathrm{h}}(t)\|$ in proportion to $\|\eta_{1}(t)\|+\|\eta_1^{\mathrm{h}}(t)\|$ over time. Although the wave profile in~\eqref{eqn:localized} introduces secular modes, the map $T_\kappa$ avoids these by design. This structural aspect is shared by~\eqref{eqn:etah}, since the discrete wave equation $\dot{w}=\mathcal{J}w$ contains no secular modes.
\begin{figure}[H]
        \centering
        \includegraphics[width=0.6\linewidth]{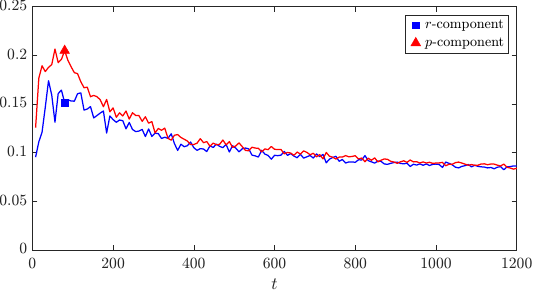}
        \caption{Relative difference $\frac{\|\eta_{1}(t)-\eta_1^{\mathrm{h}}(t)\|}{\|\eta_{1}(t)\|+\|\eta_1^{\mathrm{h}}(t)\|}$, for a particular realization with $c_*=1.015$ and $\kappa(i)\in \text{Unif}([-\sqrt{3},\sqrt{3}])$. The differences remain below 0.25, indicating a strong correspondence.}
        \label{fig:relative}
\end{figure}

\paragraph{Second-order Phase and Amplitude Corrections}
It has become clear that leading-order effects do not capture any attenuation of the amplitude $c(t)$. Next, we consider second-order contributions in the $\sigma$-expansion of the modulation parameters.  We identify $\gamma_2$ and $c_2$ in~\eqref{eqn:expgamma} and~\eqref{eqn:expc} by isolating $\mathcal{O}(\sigma^2)$ contributions in~\eqref{eqn:postl} and~\eqref{eqn:postc}. In particular, by expanding
\begin{align*}
    \begin{bmatrix}
         \Gamma^{1,0}(\xi(t),c(t))\\
         C^{1,0}(\xi(t),c(t))
     \end{bmatrix}
     =&\begin{bmatrix}
         \Gamma^{1,0}(c_*t,c_*)\\
         C^{1,0}(c_*t,c_*)
     \end{bmatrix}+\sigma\xi_1(t)\partial_\xi\begin{bmatrix}
         \Gamma^{1,0}(c_*t,c_*)\\
         C^{1,0}(c_*t,c_*)
     \end{bmatrix}+\sigma c_1(t)\partial_c\begin{bmatrix}
         \Gamma^{1,0}(c_*t,c_*)\\
         C^{1,0}(c_*t,c_*)
     \end{bmatrix}+\mathcal{O}(\sigma^2),
\end{align*}
we find that $\dot{\gamma}_2$ and $\dot{c}_2$ satisfy an equation of the form
\begin{align}
    \begin{bmatrix}
        \dot{\overline{\gamma}}_2(t)\\
        \dot{\overline{c}}_2(t)
    \end{bmatrix}=&(\gamma_1(t)+\int_0^t c_1(s) \ \d s)\partial_\xi\begin{bmatrix}
        \Gamma^{1,0}(c_*t,c_*)\\
        C^{1,0}(c_*t,c_*)
    \end{bmatrix}[\kappa]+ c_1(t)\partial_c\begin{bmatrix}
        \Gamma^{1,0}(c_*t,c_*)\\
        C^{1,0}(c_*t,c_*)
    \end{bmatrix}[\kappa]\nonumber\\
    &+\begin{bmatrix}
        \Gamma^{1,1}(c_*t,c_*)\\
        C^{1,1}(c_*t,c_*)
    \end{bmatrix}[\kappa,\overline{\eta}(t)]
    +\begin{bmatrix}
        \Gamma^{0,2}(c_*t,c_*)\\
        C^{0,2}(c_*t,c_*)
    \end{bmatrix}[\overline{\eta}_r(t),\overline{\eta}_r(t)],\label{eqn:overline}
\end{align}
with initial conditions $\overline{\gamma}_2(0)=0$ and $\overline{c}_2(0)=0$. The system~\eqref{eqn:overline} defines second-order corrections $(\overline{\gamma}_2,\overline{c}_2)$ based on a general linear approximation $\sigma \overline{\eta}$ to $\eta$. In particular, we retrieve the true second-order corrections $(\gamma_2,c_2)$ in~\eqref{eqn:expgamma} and~\eqref{eqn:expc} upon picking $\overline{\eta}=\eta_1$. Additionally, we obtain corrections $(\gamma_2^{\mathrm{h}},c_2^{\mathrm{h}})$ based on the homogeneous tail $\overline{\eta}=\eta_1^{\mathrm{h}}$ introduced in~\eqref{eqn:etah}.

As seen in \Cref{fig:second-order}, the second-order approximation $(\gamma_{2},c_{2})$ effectively captures the amplitude attenuation. The second-order corrections capture to a large extent how much energy radiates away from the soliton as a consequence of its shaking, leading to an $\mathcal{O}(\sigma^2)$ amplitude attenuation.
\begin{figure}[H]
    \centering
    \begin{subfigure}[t]{0.48\textwidth}
        \centering
        \includegraphics[width=\linewidth]{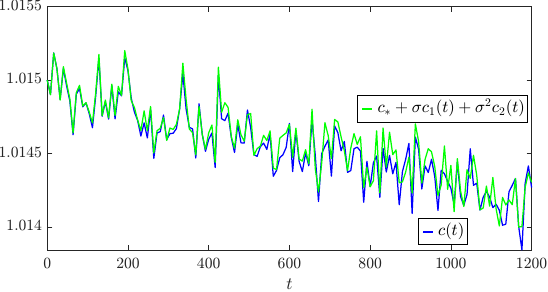}
    \end{subfigure}
    \hfill
    \begin{subfigure}[t]{0.48\textwidth}
        \centering
        \includegraphics[width=\linewidth]{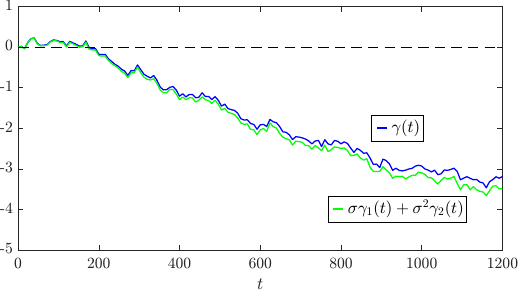}
    \end{subfigure}

    \caption{The second-order approximations $c_*+\sigma c_1(t)+\sigma^2{c}_{2}(t)$ (left) and $\sigma\gamma_1(t)+\sigma^2{\gamma}_{2}(t)$ (right) compared to $c(t)$ (left) and $\gamma(t)$ (right), respectively. ($\sigma=0.07$, $c_*=1.015$ and $\kappa(i)\in \text{Unif}([-\sqrt{3},\sqrt{3}])$.)}
    \label{fig:second-order}
\end{figure}

Although the tail approximation $\eta_1^\mathrm{h}$ defined through~\eqref{eqn:etah} neglects a linear term localized at the soliton location, 
 the second-order corrections $({\gamma}^{\mathrm{h}}_2,{c}^{\mathrm{h}}_2)$ based on $\eta_1^\mathrm{h}$ are 
 remarkably close 
 to the true corrections $({\gamma}_2,{c}_2)$---see \Cref{fig:2vsap}. We have thus arrived at two tractable approximations to the parameters $(\gamma,c,\eta)$ that capture attenuation. In the next sections, we analyze the linear tails $\overline{\eta}$ and resulting second-order corrections $(\overline{\gamma}_2,\overline{c}_2)$ in more detail.
\begin{figure}[H]
    \centering
    \begin{subfigure}[t]{0.48\textwidth}
        \centering
        \includegraphics[width=\linewidth]{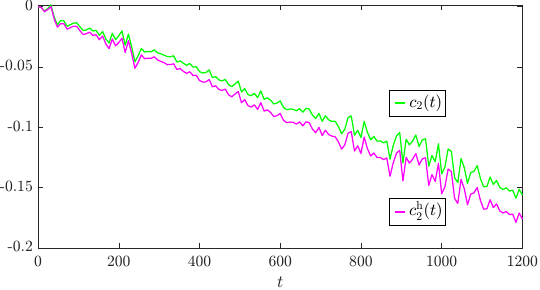}
    \end{subfigure}
    \hfill
    \begin{subfigure}[t]{0.48\textwidth}
        \centering
        \includegraphics[width=\linewidth]{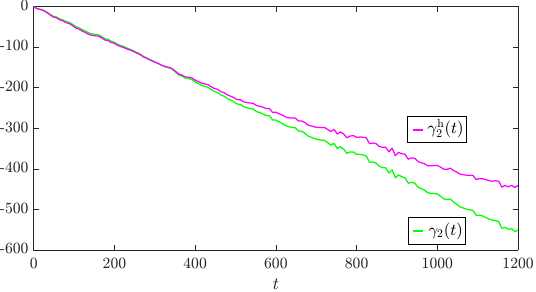}
    \end{subfigure}

    \caption{The second-order amplitude contribution $c_2(t)$ compared to $c^{\mathrm{h}}_2(t)$ (left) and the second-order phase contribution ${\gamma}_{2}(t)$ compared to ${\gamma}^{\mathrm{h}}_{2}(t)$ (right). ($c_*=1.015$ and $\kappa(i)\in \text{Unif}([-\sqrt{3},\sqrt{3}])$.)}
    \label{fig:2vsap}
\end{figure}

\section{Radiative Tail}\label{sec:radiative}
In this section, we examine the radiative tail that forms behind the solitary wave in more detail. In \Cref{sec:reduced}, we have introduced two approximations to the tail $\eta$ of the form
\begin{align}\dot{\overline{\eta}}(j,t)=\mathcal{J}\mathcal{A}_{c_*t,c_*}\ \overline{\eta}(j,t)+ T^{1,0}(c_* t,c_*)[\kappa](j),\quad \text{with}\quad\overline{\eta}(j,0)=0.\label{eqn:etaline}\end{align}
The approximation $\eta_{1}$ defined in~\eqref{eqn:etalin} features the soliton linearization $\mathcal{A}_{c_*t,c_*}=\mathcal{L}_{c_*t,c_*}$, while $\eta_1^{\mathrm{h}}$ is constructed using the constant-coefficient reduction $\mathcal{A}_{c_*t,c_*}=I$; see~\eqref{eqn:etah}.
Our aim here is to characterize the \textit{asymptotic} behavior of such tails $\overline{\eta}(t)$. In particular, we find that $\overline{\eta}(t)$ converges when viewed in a frame that moves along with the solitary wave.  Besides being interesting on its own, this can be linked directly  
to the long-time behavior of $\overline{c}_2(t)$. Indeed, this result allows us to derive a limit for the expected attenuation $\mathbb{E}[\overline{c}_2(t)]$ in \Cref{sec:attenuation}. We rely on the following assumptions on $\mathcal{A}_{c_*t,c_*}$, which are met by both $I$ and $\mathcal{L}_{c_*t,c_*}$:
\begin{hyp}\label{hyp:At}
We have $c_*\in(c_-,c_+)$ and 
$\{\mathcal{A}_{c_*t,c_*}\}_{t\in\R}$ is a family of bounded linear operators on $\ell^2(\Z;\R^2)$ that satisfies the spatio-temporal shift 
invariance
\begin{align}[\mathcal{A}_{c_*t,c_*}\eta](j+d)=[\mathcal{A}_{c_*t-d,c_*}\eta(\cdot+d)](j),\quad j,d\in\Z,\ t\in\R.\label{eqn:invariance}\end{align}
The operators $\{\mathcal{J}\mathcal{A}_{c_*t,c_*}\}_{t\in\R}$ generate an evolution family $\{U_{c_*}(t,s)\}_{t\geq s }$ on $\ell^2(\Z;\R^2)$. There exist constants $K,b>0$  such that
    \begin{align}\|e^{a(\cdot-c_*t)}U_{c_*}(t,s)w\|_{\ell^2(\Z;\R^2)}\leq Ke^{-b(t-s)}\|e^{a(\cdot-c_*s)}w\|_{\ell^2(\Z;\R^2)}, \quad t\geq s,\label{eqn:exp}\end{align}
    provided
\begin{align}\Omega(\partial_\xi\phi_{c_*}(\cdot-c_*s),w)=\Omega(\partial_c\phi_{c_*}(\cdot-c_*s),w)=0.\end{align}
\end{hyp}
We observe numerically (\Cref{subfig:t=40}) that the tail $\eta(t)$ starts out as zero, and, roughly speaking, develops in the interval $[-c_*t,c_*t]$ as the soliton passes through and excites lattice sites. In particular, when viewed in a co-moving frame traveling rightward with velocity $c_*$, the tail is roughly active in the interval $[-2c_*t,0]$. This leads to a limiting situation where all lattice sites behind the soliton are excited, while all lattice sites ahead remain almost unperturbed. Our tail approximations $\overline{\eta}$ allow us to characterize this limiting situation in more detail. In terms of the evolution family,~\eqref{eqn:etaline} reads
\[\overline{\eta}(t)=\int_0^t U_{c_*}(t,s) T^{1,0}(c_* s,c_*)[\kappa]\ \d s,\]
in which the forcing term $ T^{1,0}(c_* s,c_*)$ is localized around the soliton site $j\approx c_*s$ as it propagates from $0$ to $c_*t$. At times
\[t_n=(n+p)/c_*, \quad n\in\N,\ p\in [0,1),\]
we shift the tail by an integer $\lfloor c_*t_n\rfloor=n$, which leads to the identity 
\begin{align*}\overline{\eta}(\cdot+n,t_n)=&\int_{0}^{t_n} U_{c_*}(p/c_*,p/c_*-\tau)T^{1,0}(p-c_*\tau,c_*)[\kappa(\cdot+n)]\ \d \tau.
\end{align*}
We derive such representations using the Green's function  associated to $U_{c_*}$, which we analyze in \Cref{subsec:linear} ahead. The only dependence of the integrand on $n$ is through the shift of the random coefficients $\kappa(\cdot +n)$,
which does not affect their distribution.
Combined with the exponential stability of the evolution family $U_{c_*}$, this allows us to take the limit $n \to \infty$ and show that this shifted tail approaches an equilibrium distribution.
\begin{proposition}\label{prop:stationary}
Assuming \Cref{hyp:coeff} and \Cref{hyp:At}, there exists a (random) function $\overline{\eta}^\infty:\R\to\R$ such that for each $j\in\Z$ and $p\in[0,1)$, the sequence of random variables \[\Big(\overline{\eta}(j+n,(n+p)/c_*)\Big)_{n\geq 1}\] 
converges in distribution to $\overline{\eta}^\infty(j-p)$. 
\end{proposition}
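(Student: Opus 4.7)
The plan is to combine Duhamel's formula with the spatio-temporal shift invariance from \Cref{hyp:At} and the stationarity of the i.i.d. coefficients $\kappa$, in such a way that the shifted tail $\overline{\eta}(j+n,t_n)$ is represented by an integral whose only dependence on $n$ is through a spatial translation of $\kappa$ and through the lower limit of integration. Stationarity then handles the former at the distributional level, while the exponential stability of the evolution family $U_{c_*}$ handles the latter via dominated convergence, producing almost sure---and hence distributional---convergence.

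The first step is to write $\overline{\eta}(t)=\int_0^t U_{c_*}(t,s)\,T^{1,0}(c_* s,c_*)[\kappa]\,ds$ and to derive the shift identity
\begin{equation*}
\tau_n U_{c_*}(t,s)=U_{c_*}(t-n/c_*,\,s-n/c_*)\,\tau_n,\qquad n\in\Z,
\end{equation*}
where $\tau_n f(\cdot)=f(\cdot+n)$. This is obtained from \eqref{eqn:invariance} by verifying that both sides solve the same initial value problem at time $s$. Inspection of \eqref{eqn:Ls}--\eqref{eqn:Tk} yields the analogous identity $\tau_n T^{1,0}(c_* s,c_*)[\kappa]=T^{1,0}(c_*(s-n/c_*),c_*)[\tau_n\kappa]$. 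Applying $\tau_n$ to the Duhamel integral at $t=t_n=(n+p)/c_*$ and substituting $\sigma=s-n/c_*$ then produces
\begin{equation*}
\overline{\eta}(j+n,t_n)=\int_{-n/c_*}^{\,p/c_*}\bigl[U_{c_*}(p/c_*,\sigma)\,T^{1,0}(c_*\sigma,c_*)[\tau_n\kappa]\bigr](j)\,d\sigma.
\end{equation*}
Because $\tau_n\kappa\stackrel{d}{=}\kappa$ under \Cref{hyp:coeff}, I would then define
\begin{equation*}
\overline{\eta}^\infty(j-p):=\int_{-\infty}^{\,p/c_*}\bigl[U_{c_*}(p/c_*,\sigma)\,T^{1,0}(c_*\sigma,c_*)[\kappa]\bigr](j)\,d\sigma
\end{equation*}
---a bona fide function on $\R$, since each $x\in\R$ admits a unique representation $x=j-p$ with $j\in\Z$ and $p\in[0,1)$---and show that the finite integrals converge to this limit almost surely.

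The main obstacle is producing a uniform integrable majorant for the integrand. By the construction of $\Gamma^{1,0}$ and $C^{1,0}$, the forcing $T^{1,0}(c_*\sigma,c_*)[\kappa]$ lies in the symplectic orthogonal complement of $\partial_\xi\phi_{c_*}(\cdot-c_*\sigma)$ and $\partial_c\phi_{c_*}(\cdot-c_*\sigma)$, so \eqref{eqn:exp} applies with $s=\sigma$ and $t=p/c_*$. The almost sure bound $|\kappa(j)|\leq\alpha$ from \Cref{hyp:coeff}, combined with the exponential localization of $r_{c_*}$, $\partial_\xi r_{c_*}^2$, and $\partial_c r_{c_*}^2$, furnishes a deterministic constant $M>0$ with $\|e^{a(\cdot-c_*\sigma)}T^{1,0}(c_*\sigma,c_*)[\kappa]\|_{\ell^2}\leq M$ uniformly in $\sigma$. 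Extracting the pointwise value at the fixed site $j$ from the weighted $\ell^2$ estimate then bounds the integrand by a constant multiple of $e^{-b(p/c_*-\sigma)}$ on $(-\infty,p/c_*]$. Dominated convergence closes the argument, and distributional convergence of the original sequence follows since $\overline{\eta}(j+n,t_n)$ shares its distribution with the integral obtained by replacing $\tau_n\kappa$ by $\kappa$.
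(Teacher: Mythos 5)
Your argument is correct, and it reaches the same destination as the paper by the same underlying mechanism (pass to the co-moving frame via the shift identity for $U_{c_*}$, use that $\tau_n\kappa\stackrel{d}{=}\kappa$ to trade the shifted coefficients for unshifted ones at the level of laws, and use the exponential stability bound~\eqref{eqn:exp} to control the growing time interval). The execution, however, differs from the paper's in a genuine way. The paper first factors the randomness out entirely: it writes $\overline{\eta}(j,t)=\sum_m\kappa(m)\overline{R}(j,m,t)$ in terms of the deterministic response function~\eqref{eqn:response}, proves that $\overline{R}(\cdot+n,\cdot+n,t_n)\to\overline{R}^\infty(\cdot,\cdot,p)$ at an exponential rate in the weighted space $\ell^2_a(\Z^2;\R^2)$ (\Cref{prop:Rinf}), and then concludes by an $L^2$ computation with an auxiliary i.i.d.\ copy $\zeta$, which only uses that $\kappa$ has finite variance. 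You instead keep the randomness inside the Duhamel integral and obtain a deterministic integrable majorant from the almost-sure bound $|\kappa(j)|\le\alpha$ of \Cref{hyp:coeff} together with~\eqref{eqn:asymptotics}, giving almost-sure convergence of the distributionally equivalent surrogate $F_n(\kappa)$. Your route is more self-contained and elementary for this particular proposition (no weighted $\ell^2(\Z^2)$ machinery, no auxiliary sequence), at the cost of genuinely using the bounded support of $\kappa$ where the paper needs only second moments; conversely, the paper's detour through $\overline{R}^\infty$ is not wasted effort, since the quantitative $\ell^2_a(\Z^2;\R^2)$ convergence of \Cref{prop:Rinf} is exactly what feeds into \Cref{cor:limit} and the attenuation rate~\eqref{eqn:rate}, which your pointwise argument would not deliver directly. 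Two small points to keep in mind: the orthogonality of $T^{1,0}(c_*\sigma,c_*)[\kappa]$ to the neutral modes at time $\sigma$, which you invoke to apply~\eqref{eqn:exp}, does hold by the construction of $\Gamma^{1,0}$ and $C^{1,0}$ but deserves the one-line verification; and since the integrand of your surrogate does not depend on $n$, the final limit is simply the convergence of the tail of an absolutely convergent improper integral rather than an application of dominated convergence.
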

\begin{remark}\label{rem:continuity}
A representation of $\overline{\eta}(x)$ is given in~\eqref{eqn:etainf} ahead, which we suspect is (almost surely) continuous in $x$; see \Cref{fig:tail} ahead. We do not attempt to prove this here.
\end{remark}
The proof of \Cref{prop:stationary} is given at the end of \Cref{subsec:asymptotics}. In the limit, the tail is still $1/c_*$-periodic in time in some sense, reflecting that the lattice periodicity is traversed with velocity $c_*$. Here and below, $p\in[0,1)$ denotes the phase of the soliton (for instance, $p=0$ means centered at a lattice point and $p=1/2$ means exactly between lattice points). The convergence in \Cref{prop:stationary} holds only in distribution because, roughly speaking, the solitary wave continually encounters different random coefficients. Nevertheless, the distribution of the random coefficients is translation-invariant.

\subsection{Linear theory}\label{subsec:linear}
Our strategy will be to work with a decomposition of $\overline{\eta}$ in terms of deterministic functions. More precisely, we analyze the asymptotic behavior of tails $\overline{\eta}(t)$  based on a `response' function $R:\Z\times \Z \times \R^+\to\R^2$, defined through
\begin{align}\dot{\overline{R}}(j,m,t)=\mathcal{J}\mathcal{A}_{c_*t,c_*}\overline{R}(j,m,t)+ T^{1,0}(c_* t,c_*)[\delta_m](j),\quad \text{with}\quad \overline{R}(j,m,0)=0.\label{eqn:response}\end{align} 
Here, $\delta_m\in\ell^2(\Z;\R)$ denotes the sequence with value $1$ at $j=m$ and 0 elsewhere. Hence, the response function expresses the effect of a spring heterogeneity at site $m$ on the random tail $\overline{\eta}(t)$. The advantage of this decomposition is that we can analyze the asymptotics of the deterministic components $\overline{R}$ in a co-moving frame, without directly involving the constantly shifting random coefficients $\kappa$. We recall that $T^{1,0}$ is introduced in~\eqref{eqn:Tk}, and remark that
\begin{align}
T^{1,0}(c_* t,c_*)[\delta_m](j)=& \frac{1}{2}\begin{bmatrix}
      \partial_\xi\phi_{c_*}(j-c_*t) \\
      \partial_c\phi_{c_*}(j-c_*t)
    \end{bmatrix}^\top  A^{-1}(c_*)\begin{bmatrix}
         \partial_\xi r^2_{c_*}(m-c_*t)\\
         \partial_cr^2_{c_*}(m-c_*t)
    \end{bmatrix}\nonumber
\\&+ r_{c_*}(m-c_*t)\begin{bmatrix}0\\
     (\delta_m-\delta_{m+1})(j) 
\end{bmatrix}.\label{eqn:Tdelta}
\end{align}
Since the wave-profiles and their derivatives are exponentially localized \cite[Proposition 5.5]{FPI}, this forcing term satisfies the bound
\begin{align}\Big|T^{1,0}(c_*t,c_*)[\delta_m](j)\Big|\leq  Ce^{-\beta|j-c_*t|} 
     e^{-\beta|m-c_*t|}+Ce^{-\beta|m-c_*t|}
     (\delta_0+\delta_{1})(j-m)\label{eqn:asymptotics}\end{align} 
for some $C>0$ and $\beta>a$, where $a$ is the weight of \Cref{prop:linearstability}. 
Below, we derive the representation of $\overline{\eta}(t)$ as a random series. To this end, we note that the solution to~\eqref{eqn:response} can be represented by the Duhamel formula
\begin{align}\overline{R}(\cdot,m,t)=\int_0^t U_{c_*}(t,s) T^{1,0}(c_*s,c_*)[\delta_m]\ \d s. \label{eqn:duhamel}\end{align}
\begin{proposition}\label{prop:randomsum}
 Assume \Cref{hyp:coeff} and \Cref{hyp:At}. For each $t\geq0$, the random sequence
\begin{align}
    \overline{\eta}(j,t)=&\sum_{m\in\Z}\kappa(m)\overline{R}(j,m,t),\quad j\in\Z,\label{eqn:sum}
\end{align}lies almost surely in $\ell^2(\Z;\R^2)$ and solves the linear system~\eqref{eqn:etaline}.\end{proposition}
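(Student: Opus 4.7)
The plan is to first show that the random series converges in $L^2(\Omega;\ell^2(\Z;\R^2))$, and then exploit the linearity of \eqref{eqn:etaline} to verify that the limit solves the equation.

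First, I would estimate $\|\overline{R}(\cdot,m,t)\|_{\ell^2}$ uniformly in $m$. Squaring and summing the pointwise bound \eqref{eqn:asymptotics} over $j$ yields, for some $C>0$,
\[\|T^{1,0}(c_*s,c_*)[\delta_m]\|_{\ell^2(\Z;\R^2)} \leq C e^{-\beta|m-c_*s|}.\]
By \Cref{hyp:At} the operators $\mathcal{J}\mathcal{A}_{c_*t,c_*}$ generate an evolution family $U_{c_*}(t,s)$; for both choices $\mathcal{A}=I$ and $\mathcal{A}=\mathcal{L}_{c_*t,c_*}$ the generators are uniformly bounded on $\ell^2$, which via Gronwall gives $\|U_{c_*}(t,s)\|_{\ell^2\to\ell^2}\leq M(t)<\infty$. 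Combining with the Duhamel formula \eqref{eqn:duhamel},
\[\|\overline{R}(\cdot,m,t)\|_{\ell^2} \leq CM(t)\int_0^t e^{-\beta|m-c_*s|}\,ds,\]
which is $O(1)$ for $m/c_*\in[0,t]$ and exponentially small in the distance from that interval otherwise. Summing the square over $m\in\Z$ thus produces a finite quantity of order $t$.

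Second, from \Cref{hyp:coeff} the $\kappa(m)$ are i.i.d.\ with mean zero and unit variance, so the partial sums $S_N(t):=\sum_{|m|\leq N}\kappa(m)\overline{R}(\cdot,m,t)$ satisfy
\[\mathbb{E}\|S_{N'}(t)-S_N(t)\|_{\ell^2}^2 \;=\; \sum_{N<|m|\leq N'}\|\overline{R}(\cdot,m,t)\|_{\ell^2}^2,\]
which vanishes as $N\to\infty$. Hence $(S_N(t))_N$ is Cauchy in $L^2(\Omega;\ell^2(\Z;\R^2))$; by Chebyshev and Borel--Cantelli one obtains a.s.\ convergence of a subsequence, and a standard argument promotes this to a.s.\ convergence of the full series. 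This identifies $\overline{\eta}(\cdot,t)$ as a well-defined element of $\ell^2(\Z;\R^2)$ almost surely.

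Third, to verify that $\overline{\eta}$ solves \eqref{eqn:etaline}, I would work with the integrated (Duhamel) form. Since $T^{1,0}(c_*s,c_*)$ is linear, $T^{1,0}(c_*s,c_*)[\kappa]=\sum_m \kappa(m)T^{1,0}(c_*s,c_*)[\delta_m]$ with uniform-in-$s$ convergence in $\ell^2$ on $[0,t]$ (again by the exponential localization in $|m-c_*s|$). Fubini then justifies interchanging the sum with the Duhamel integral to conclude
\[\int_0^t U_{c_*}(t,s)T^{1,0}(c_*s,c_*)[\kappa]\,ds \;=\; \sum_m \kappa(m)\int_0^t U_{c_*}(t,s)T^{1,0}(c_*s,c_*)[\delta_m]\,ds \;=\; \sum_m\kappa(m)\overline{R}(\cdot,m,t),\]
identifying the left-hand side (which solves \eqref{eqn:etaline} by definition) with $\overline{\eta}(\cdot,t)$.

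The main technical obstacle is the summability $\sum_m\|\overline{R}(\cdot,m,t)\|_{\ell^2}^2<\infty$: this requires both the space-time localization of the source in \eqref{eqn:asymptotics} and the boundedness of the evolution family. The key point is that the source $T^{1,0}(c_*s,c_*)[\delta_m]$ is essentially active only in the narrow window when the soliton passes over site $m$, so that for fixed $t$ only $O(c_*t)$ indices contribute appreciably and the remainder is controlled by exponential tails on either side of $[0,c_*t]$.
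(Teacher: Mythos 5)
Your proof is correct, and the Duhamel/Fubini identification in your third step is essentially identical to the paper's. Where you genuinely diverge is in establishing that the random series lies in $\ell^2(\Z;\R^2)$. The paper exploits the bounded support of the coefficients from \Cref{hyp:coeff} ($|\kappa(m)|\le\alpha$ almost surely) to get deterministic absolute convergence in one line, via the \emph{summability} $\sum_{m}\|\overline{R}(\cdot,m,t)\|_{\ell^2(\Z;\R^2)}<\infty$, which follows from the same bound $\|T^{1,0}(c_*s,c_*)[\delta_m]\|_{\ell^2(\Z;\R^2)}\le \tilde C e^{-\beta|m-c_*s|}$ that you derive. You instead run a Khinchin--Kolmogorov-type argument: orthogonality of the mean-zero i.i.d.\ $\kappa(m)$ reduces the problem to \emph{square}-summability of $\|\overline{R}(\cdot,m,t)\|_{\ell^2}$ over $m$, giving convergence in $L^2(\Omega;\ell^2)$ and then almost sure convergence. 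Your route is strictly more general---it would survive dropping the bounded-support assumption in favor of finite variance alone---at the cost of more probabilistic machinery; in particular, the step promoting subsequential a.s.\ convergence to a.s.\ convergence of the full series silently relies on L\'evy's maximal inequality (or the It\^o--Nisio theorem) for sums of independent Hilbert-space-valued summands, which you should cite rather than call ``standard.'' One further cosmetic point: your Gronwall justification of $\|U_{c_*}(t,s)\|_{\ell^2\to\ell^2}\le M(t)$ is argued only for the two concrete choices $\mathcal{A}=I$ and $\mathcal{A}=\mathcal{L}_{c_*t,c_*}$, whereas the proposition is stated under the general \Cref{hyp:At}; the paper simply invokes local boundedness of the evolution family, which is part of its definition, and you could do the same.
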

\begin{proof}
We first show that the series in~\eqref{eqn:sum} lies in $\ell^2(\Z;\R^2)$. For each $m\in\Z$:
\begin{align*}
    \|\overline{R}(\cdot,m,t)\|_{\ell^2(\Z;\R^2)}=&\Big\|\int_0^t U_{c_*}(t,s) T^{1,0}(c_*s,c_*)[\delta_m]\ \d s\Big\|_{\ell^2(\Z;\R^2)}\\
    \leq & t \sup_{s\in[0,t]}\|U_{c_*}(t,s)\|_{\mathcal{L}(\ell^2(\Z;\R^2))}\|T^{1,0}(c_*s,c_*)[\delta_m]\|_{\ell^2(\Z;\R^2)}.
\end{align*}
From~\eqref{eqn:asymptotics} we then get
\begin{align*}
    \|T^1(c_*s,c_*)[\delta_m]\|_{\ell^2(\Z;\R^2)}\leq  {C}e^{-\beta|m-c_*s|}\sum_{j\in\Z} e^{-\beta|j-c_*s|} 
     +{C}e^{-\beta|m-c_*s|}\leq {\tilde{C}}e^{-\beta|m-c_*s|},
\end{align*}
which is summable in $m$:
\[\Big\|\sum_{m\in\Z}\kappa(m)\overline{R}(\cdot,m,t)\Big\|_{\ell^2(\Z;\R^2)}\leq\alpha\sum_{m\in\Z}\|\overline{R}(\cdot,m,t)\|_{\ell^2(\Z;\R^2)}<\infty.\]
The variation of constants formula then shows that~\eqref{eqn:etaline} is solved by 
\begin{align*}
    \overline{\eta}(j,t)=&\int_0^t U_{c_*}(t,s)T^{1,0}(c_*s,c_*)[\kappa]\ \d s.\end{align*}
    The result follows via~\eqref{eqn:duhamel} and the identity
    \[T^{1,0}(c_*s,c_*)[\kappa]=\sum_{m\in\Z}\kappa(m)T^{1,0}(c_*s,c_*)[\delta_m].\qedhere\]
    \end{proof}
We proceed with a result regarding the Green's function associated to $\mathcal{J}\mathcal{A}_{c_*t,c_*}$, which will be key to characterizing the asymptotics of the response function $\overline{R}(j,m,t)$. 
\begin{lemma}[See \Cref{app:greens}]\label{lem:greens}
   Assuming \Cref{hyp:At}, there exists a Green's function $\mathcal{G}_{c_*}:\R^+\times\R\times\R\to \R^{2\times2}$ such that the evolution family associated to $\mathcal{J}\mathcal{A}_{c_*t,c_*}$ admits the representation 
    \begin{align}U_{c_*}(t,s)\begin{bmatrix}
        u \\
        v
    \end{bmatrix}(j)=\sum_{k\in\Z}\mathcal{G}_{c_*}(t-s,j-c_*t,k-c_*s)\begin{bmatrix}
        u(k)\\
        v(k)
    \end{bmatrix},\quad \begin{bmatrix}
        u \\ v
    \end{bmatrix}\in\ell^2(\Z;\R^2).\label{eqn:form}\end{align}
\end{lemma}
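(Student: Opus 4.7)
The plan is to define $\mathcal{G}_{c_*}$ through the action of $U_{c_*}(t,s)$ on the standard basis $\{\delta_k e_i\}_{k\in\Z,\ i\in\{1,2\}}$ of $\ell^2(\Z;\R^2)$, where $e_1,e_2\in\R^2$ denote the standard unit vectors, and then to extract the claimed dependence on the co-moving variables $(t-s,\ j-c_*t,\ k-c_*s)$ from the space-time symmetry inherited from~\eqref{eqn:invariance}.

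First I would establish the key identity
\[U_{c_*}(t+d/c_*,\ s+d/c_*)=S_{-d}\,U_{c_*}(t,s)\,S_{d},\qquad d\in\Z,\]
where $(S_d f)(j):=f(j+d)$. Because $\mathcal{J}$ commutes with spatial shifts and $\mathcal{A}_{c_*t,c_*}$ satisfies~\eqref{eqn:invariance}, a direct calculation shows that whenever $\eta(\cdot,t)$ solves $\dot\eta=\mathcal{J}\mathcal{A}_{c_*t,c_*}\eta$, the shifted function $\tilde\eta(j,t):=\eta(j+d,t+d/c_*)$ solves the same equation. Specializing to $\eta(t)=U_{c_*}(t,s+d/c_*)u_0$ and using uniqueness of the linear evolution (the generator is bounded on $\ell^2(\Z;\R^2)$) yields the displayed identity.

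Next I would define, for $s\in\R$, $k\in\Z$ and $i\in\{1,2\}$, the $2\times 2$ matrix $G_{s,k}(t,j)$ column-wise via
\[G_{s,k}(t,j)e_i:=[U_{c_*}(t,s)(\delta_k e_i)](j).\]
The symmetry above translates into $G_{s+d/c_*,\,k+d}(t+d/c_*,\,j+d)=G_{s,k}(t,j)$ for all $d\in\Z$, and under this joint shift the triple $(t-s,\ j-c_*t,\ k-c_*s)$ is invariant. The main bookkeeping step is the converse: given two quadruples $(t,s,j,k)$ and $(t',s',j',k')$ in $\R\times\R\times\Z\times\Z$ producing the same triple, subtraction gives $t-t'=s-s'$ and $j-j'=c_*(t-t')=c_*(s-s')=k-k'$. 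Since $j-j'\in\Z$, the common value $d:=j-j'=k-k'$ lies in $\Z$, and $t-t'=s-s'=d/c_*$, so the two quadruples are related by the shift above. Consequently, the prescription
\[\mathcal{G}_{c_*}(\tau,x,y):=G_{s,k}(t,j)\quad\text{whenever }(t-s,\ j-c_*t,\ k-c_*s)=(\tau,x,y)\]
unambiguously defines a $\R^{2\times 2}$-valued function on $\R^+\times\R\times\R$, noting that $\{j-c_*t:j\in\Z,\ t\in\R\}=\R$ so every $(\tau,x,y)$ is realized.

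Finally, the representation~\eqref{eqn:form} follows by linearity: for $u\in\ell^2(\Z;\R^2)$, the decomposition $u=\sum_{k\in\Z}\delta_k u(k)$ converges in $\ell^2$, boundedness of $U_{c_*}(t,s)$ on $\ell^2(\Z;\R^2)$ (part of \Cref{hyp:At}) permits interchanging the sum with the operator, and the column-wise definition gives $[U_{c_*}(t,s)(\delta_k u(k))](j)=G_{s,k}(t,j)u(k)=\mathcal{G}_{c_*}(t-s,j-c_*t,k-c_*s)u(k)$, which sums to~\eqref{eqn:form}. The hard part will be the well-definedness verification in the second paragraph, where the interplay between continuous $t,s$ and discrete $j,k$ must be tracked carefully; once the space-time symmetry is in place, the rest is a structural consequence.
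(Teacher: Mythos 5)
Your proposal is correct and follows essentially the same route as the paper's proof in \Cref{app:greens}: both establish the shift-commutation identity for the evolution family from~\eqref{eqn:invariance} via uniqueness of the operator-valued ODE, introduce the kernel through the action on basis vectors, and conclude that the kernel depends only on the triple $(t-s,\,j-c_*t,\,k-c_*s)$ (your explicit well-definedness check is a slightly more careful rendering of the paper's ``basis transformation of the arguments''). The only slip is the parenthetical claim that every $(\tau,x,y)\in\R^+\times\R\times\R$ is realized by some admissible quadruple --- in fact only triples with $y-x-c_*\tau\in\Z$ occur, so $\mathcal{G}_{c_*}$ must be extended arbitrarily off that set --- but this is immaterial to the representation~\eqref{eqn:form}.
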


\begin{remark}[See \Cref{lem:discwave}]
In the constant-coefficient case $\mathcal{J}\mathcal{A}_{c_*t,c_*}=\mathcal{J}$, the evolution family $U_{c_*}$ reduces to the unitary $C_0$-group $\{e^{\mathcal{J}t}\}_{t\in \R}$. In particular, these solution operators admit an explicit kernel representation 
\begin{align}
   U_{c_*}(t,s)\begin{bmatrix}
        u\\
        v
    \end{bmatrix}(j)=e^{\mathcal{J}(t-s)} \begin{bmatrix}
        u\\
        v
    \end{bmatrix}(j)=\sum_{k\in\Z}\Phi(j-k,t-s)\begin{bmatrix}
        u(k)
\\
v(k)\end{bmatrix},\quad j\in\Z,\ t\in \R,\label{eqn:linearwave}
\end{align}
with
\begin{align}\label{eqn:kernel}
    \Phi(j,t):=\begin{bmatrix}
        J_{2j}(2t) & -J_{2j+1}(2t)\\
        -J_{2j-1}(2t) & J_{2j}(2t)
    \end{bmatrix}.
\end{align}
Here, $J_n:\R\to \R$ are Bessel functions of the first kind, and for $n\in \Z^-$ we use the convention that $J_n(x)=(-1)^nJ_{-n}(x)$. Hence, the Green's function associated to the discrete wave operator $\mathcal{J}$ satisfies~\eqref{eqn:form}, with
\[\mathcal{G}_{c_*}(\alpha,\beta,\gamma)=\Phi(\beta-\gamma+c_*\alpha,\alpha).\]
\end{remark}

\subsection{Asymptotic response}\label{subsec:asymptotics}

We proceed by analyzing the asymptotic behavior of~\eqref{eqn:response} in the co-moving frame, based on the Green's function representation~\eqref{eqn:form}. We will see that at times
\[t_n=(n+p)/c_*, \quad n\in\N,\ p\in [0,1),\]
we have a representation of the (shifted) response function $\overline{R}(j+\lfloor c_*t\rfloor,m+\lfloor c_* t\rfloor,t)$ as
\begin{align*}\overline{R}(j+n,m+n,t_n)=&\int_{0}^{t_n} \Big(U_{c_*}(p/c_*,p/c_*-\tau)T^{1,0}(p-c_*\tau,c_*)[\delta_m]\Big)(j)\ \d \tau\\
=&\int_{0}^{t_n} \sum_{k\in\Z}\mathcal{G}_{c_*}(\tau,j-p,k-p+c_*\tau)T^{1,0}(p-c_*\tau,c_*)[\delta_m](k)\ \d \tau.
\end{align*}
Hence, this collects the effect of a heterogeneity at site $m$ on site $j$ as the solitary wave travels from $-n$ to $p$.  In this `co-moving frame', stationary forcing competes with an exponentially stable linear flow.
This allows us to take $n \to \infty$ and arrive
at the limiting response function
$\overline{R}^\infty:\Z\times\Z\times[0,1]\to\R^2$, defined as
\begin{align}\overline{R}^\infty(j,m,p)=\int_{0}^\infty \sum_{k\in\Z}\mathcal{G}_{c_*}(\tau,j-p,k-p+c_*\tau)T^{1,0}(p-c_*\tau,c_*)[\delta_m](k)\ \d \tau.\label{eqn:Rinf}\end{align}
Note that in the constant-coefficient case (\Cref{lem:discwave}), the kernel in~\eqref{eqn:Rinf} is explicitly given by
\[\mathcal{G}_{c_*}(\tau,j-p,k-p+c_*\tau)=\Phi(j-k,\tau).\]
\begin{lemma}
Assume \Cref{hyp:At}. For each $j,m\in\Z$ and $p\in[0,1)$,
we have the pointwise limit
    \begin{align}\lim_{n\to\infty}\overline{R}(j+n,m+n,(n+p)/c_*) =\overline{R}^\infty(j,m,p).\label{eqn:pointwise}\end{align}
\end{lemma}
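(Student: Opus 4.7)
The plan is to first rewrite $\overline{R}(j+n, m+n, t_n)$ as an integral whose integrand does not depend on $n$, and then to bound that integrand by an integrable function so that passing $n \to \infty$ amounts to extending the integration range from $[0, t_n]$ to $[0, \infty)$. Starting from the Duhamel formula \eqref{eqn:duhamel}, I will apply the Green's function representation of \Cref{lem:greens} and use $c_* t_n = n + p$ to obtain
\begin{align*}
\overline{R}(j+n, m+n, t_n) = \int_0^{t_n} \sum_{k \in \Z} \mathcal{G}_{c_*}(t_n - s,\, j - p,\, k - c_* s)\, T^{1,0}(c_* s, c_*)[\delta_{m+n}](k)\, ds.
\end{align*}
Substituting $\tau = t_n - s$ and relabeling $k \mapsto k + n$, then exploiting the shift invariance encoded in \eqref{eqn:Tdelta} --- which makes it explicit that $T^{1,0}(c_* t, c_*)[\delta_m](j)$ depends on $(j, m, c_* t)$ only through the pair $(j - c_* t,\ m - c_* t)$ --- I arrive at
\[
\overline{R}(j+n, m+n, t_n) = \int_0^{t_n} F(\tau)\, d\tau,
\]
where $F(\tau) := \sum_{k \in \Z} \mathcal{G}_{c_*}(\tau,\, j - p,\, k - p + c_* \tau)\, T^{1,0}(p - c_* \tau, c_*)[\delta_m](k)$ is precisely the integrand in the definition \eqref{eqn:Rinf} of $\overline{R}^\infty(j, m, p)$ and no longer depends on $n$.

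The next step will be to control $|F(\tau)|$ uniformly in $\tau$. Reading the sum back through \Cref{lem:greens}, $F(\tau)$ equals the $j$-th $\R^2$-component of $U_{c_*}(p/c_*,\, p/c_* - \tau)\, T^{1,0}(p - c_* \tau, c_*)[\delta_m]$. Combining the pointwise inequality $|g(j)| \leq e^{-a(j-p)} \|e^{a(\cdot - p)} g\|_{\ell^2(\Z;\R^2)}$ with the exponential stability estimate \eqref{eqn:exp} of \Cref{hyp:At} yields
\[
|F(\tau)| \leq C_j\, e^{-b \tau}\, \|e^{a(\cdot - (p - c_* \tau))} T^{1,0}(p - c_* \tau, c_*)[\delta_m]\|_{\ell^2(\Z;\R^2)}.
\]
Feeding the localization bound \eqref{eqn:asymptotics} into the weighted $\ell^2$-norm, together with $a < \beta$, produces an estimate of the form $|F(\tau)| \leq C\, e^{-b \tau}\, e^{-(\beta - a)|m - p + c_* \tau|}$, which is integrable on $[0, \infty)$. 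Hence $\int_0^{t_n} F(\tau)\, d\tau \to \int_0^\infty F(\tau)\, d\tau = \overline{R}^\infty(j, m, p)$ as $n \to \infty$, finishing the argument.

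The main obstacle will be justifying that \eqref{eqn:exp} may be applied with $w = T^{1,0}(p - c_*\tau, c_*)[\delta_m]$, which requires the symplectic orthogonality conditions against the neutral modes $\partial_\xi \phi_{c_*}(\cdot - (p - c_*\tau))$ and $\partial_c \phi_{c_*}(\cdot - (p - c_*\tau))$. I plan to verify these by a direct computation from \eqref{eqn:Tk} together with the definitions \eqref{eqn:Ls} of $\Gamma^{1,0}$ and $C^{1,0}$, using the adjoint relation $\delta_+^* = -\delta_-$ and the tabulated values of $\Omega$ on the neutral modes; the three contributions should cancel precisely, reflecting that $T^{1,0}$ was engineered in \Cref{sec:modulation} to preserve the orthogonal decomposition of $\eta$. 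Once this orthogonality check is in place, the remaining steps are routine bookkeeping with the exponential decay estimates.
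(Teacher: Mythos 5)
Your proposal is correct and follows essentially the same route as the paper: Duhamel's formula, the Green's function representation of \Cref{lem:greens}, the relabeling $k\mapsto k+n$, the substitution $\tau=t_n-s$, and the shift invariance of $T^{1,0}$ to produce the $n$-independent integrand of~\eqref{eqn:Rinf}. The extra work you do---bounding the integrand via~\eqref{eqn:exp} and~\eqref{eqn:asymptotics} to justify passing to the improper integral, and checking the symplectic orthogonality of $T^{1,0}[\delta_m]$---is exactly the content the paper defers to \Cref{lem:Tbound}, \Cref{lem:contained} and \Cref{prop:Rinf} (and to the orthogonality remark surrounding~\eqref{eqn:etalinortho}), so it is a welcome tightening rather than a different argument.
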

\begin{proof}
Let $p\in [0,1)$ and \[t_n=(n+p)/c_*, \quad n\in\N.\] Then~\eqref{eqn:duhamel} gives
\begin{align*}\overline{R}(j+n,m+n,t_n)=&\int_0^{t_n} \sum_{k\in\Z}\mathcal{G}_{c_*}(t_n-s,j+n-c_*t_n,k-c_*s)T^{1,0}_{\delta_{m+n}}(c_*s,c_*)(k)\ \d s\\
=&\int_0^{t_n} \sum_{k'\in\Z}\mathcal{G}_{c_*}(t_n-s,j-p,k'+n-c_*s)T^{1,0}(c_*s,c_*)[\delta_{m+n}](k'+n)\ \d s.
\end{align*}
Using
\begin{align*}T^{1,0}(c_*s,c_*)[\delta_{m+n}](k'+n)=&T^{1,0}(c_*s-n,c_*)[\delta_m](k')
=T^{1,0}(p-c_*(t_n-s),c_*)[\delta_m](k'),\
\end{align*}
and substituting $\tau=t_n-s$, we obtain that
\begin{align}\overline{R}(j+n,m+n,t_n)=&\int_0^{t_n} \sum_{k'\in\Z}\mathcal{G}_{c_*}(\tau,j-p,k'-p+c_*\tau)T^{1,0}(p-c_*\tau,c_*)[\delta_m](k')\ \d \tau\label{eqn:shiftgreen}\\
\to& \overline{R}^\infty(j,m,p) \quad \text{as}\quad n\to \infty.\nonumber\end{align}
Thus, we have established~\eqref{eqn:pointwise}: pointwise convergence in $j$ and $m$. 
\end{proof}
 We will in fact require a stronger form of convergence for our purposes of analyzing the asymptotics of $\mathbb{E}[\overline{c}_2(t)]$ in \Cref{sec:attenuation}. Recall that the weight $a>0$ is introduced in \Cref{prop:linearstability}. We then define the exponentially weighted spaces
\[
\ell^2_a(\mathbb{Z}^2;\mathbb{R}^2)
:= \left\{ f:\mathbb{Z}^2\to\mathbb{R}^2 \;\middle|\; \|f\|_{\ell^2_a(\mathbb{Z}^2;\mathbb{R}^2)}<\infty \right\},\]
with norm
\[
\|f\|_{\ell^2_a(\mathbb{Z}^2;\mathbb{R}^2)}^2
= \sum_{m\in\mathbb{Z}} \sum_{j\in\Z}e^{2aj} \big(f_r^2(j,m)+f^2_p(j,m)\big) .
\]
\begin{proposition}
\label{prop:Rinf}
Assume \Cref{hyp:At}. The response function $\overline{R}(\cdot,\cdot,(n+p)/c_*)$ converges to $\overline{R}^\infty(\cdot,\cdot,p)$ in $\ell^2_a(\mathbb{Z}^2;\mathbb{R}^2)$ at an exponential rate: there exist constants $C,q>0$ such that
    \begin{align}\|\overline{R}(\cdot+n,\cdot+n,(n+p)/c_*)- \overline{R}^\infty(\cdot,\cdot,p)\|_{\ell^2_a(\mathbb{Z}^2;\mathbb{R}^2)}\leq Ce^{-qn},\quad n\in\N.\label{eqn:exprate}\end{align}
\end{proposition}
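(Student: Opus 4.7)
My plan is to write the difference $\overline{R}^\infty(j,m,p)-\overline{R}(j+n,m+n,(n+p)/c_*)$ as a tail integral, then apply the weighted exponential stability of $U_{c_*}$ together with the localization bound \eqref{eqn:asymptotics} for the forcing. Concretely, subtracting \eqref{eqn:shiftgreen} from the definition \eqref{eqn:Rinf} of $\overline{R}^\infty$, and using the Green's-function identification $\sum_{k}\mathcal{G}_{c_*}(\tau,j-p,k-p+c_*\tau)v(k)=\bigl[U_{c_*}(p/c_*,p/c_*-\tau)v\bigr](j)$ from \Cref{lem:greens}, I would write
\[
\overline{R}^\infty(j,m,p)-\overline{R}(j+n,m+n,t_n)
=\int_{t_n}^\infty \bigl[U_{c_*}(p/c_*,p/c_*-\tau)\,T^{1,0}(p-c_*\tau,c_*)[\delta_m]\bigr](j)\,\d\tau.
\]
Since the maps $\Gamma^{1,0}$ and $C^{1,0}$ in \eqref{eqn:Tk} are chosen precisely so that $T^{1,0}(\xi,c_*)[f]$ is $\Omega$-orthogonal to $\partial_\xi\phi_{\xi,c_*}$ and $\partial_c\phi_{\xi,c_*}$ (this mirrors the cancellation exploited in the proof of \Cref{prop:etamod}), the initial datum $T^{1,0}(p-c_*\tau,c_*)[\delta_m]$ lies in the subspace where \eqref{eqn:exp} applies at time $s=p/c_*-\tau$.

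With $(t,s)=(p/c_*,p/c_*-\tau)$ and the corresponding weights $e^{a(\cdot-p)}$ and $e^{a(\cdot-(p-c_*\tau))}$, the stability bound \eqref{eqn:exp} gives the pointwise-in-$\tau$ estimate
\[
\bigl\|e^{a(\cdot-p)}U_{c_*}(p/c_*,p/c_*-\tau)T^{1,0}(p-c_*\tau,c_*)[\delta_m]\bigr\|_{\ell^2}\le K e^{-b\tau}\bigl\|e^{a(\cdot-(p-c_*\tau))}T^{1,0}(p-c_*\tau,c_*)[\delta_m]\bigr\|_{\ell^2}.
\]
The weighted norm on the right is then controlled using \eqref{eqn:asymptotics}: because $\beta>a$, a routine split of $\sum_k e^{2a(k-(p-c_*\tau))}$ against $e^{-2\beta|k-(p-c_*\tau)|}$ together with the two delta contributions yields a bound of the form $\tilde C e^{-\mu |m-(p-c_*\tau)|}$ with $\mu:=\beta-a>0$. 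Applying Minkowski's integral inequality to move the $\ell^2$-norm through $\int_{t_n}^\infty$ then produces
\[
\bigl\|e^{a(\cdot-p)}\bigl[\overline{R}^\infty(\cdot,m,p)-\overline{R}(\cdot+n,m+n,t_n)\bigr]\bigr\|_{\ell^2}\le K\tilde C\int_{t_n}^\infty e^{-b\tau}e^{-\mu|m-(p-c_*\tau)|}\,\d\tau.
\]

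The final step is to change variables $\tau'=\tau-t_n$ (so that $p-c_*\tau = -n-c_*\tau'$) to factor out $e^{-bt_n}$ and rewrite the remaining integral as $G(m+n)$, where
\[
G(y):=\int_0^\infty e^{-b\tau'}e^{-\mu|y+c_*\tau'|}\,\d\tau'.
\]
A short case analysis (splitting $y\ge 0$ and $y<0$, with the latter further split at $\tau'=-y/c_*$) shows $G$ is bounded and decays exponentially as $|y|\to\infty$ at rate $\min(\mu,b/c_*)$, so $G\in\ell^2(\Z)$. Squaring, summing over $m\in\Z$, and multiplying by $e^{2ap}$ (uniformly bounded for $p\in[0,1)$) then yields
\[
\|\overline{R}(\cdot+n,\cdot+n,t_n)-\overline{R}^\infty(\cdot,\cdot,p)\|_{\ell^2_a(\Z^2;\R^2)}^2 \le C e^{-2bt_n}\|G\|_{\ell^2(\Z)}^2,
\]
which gives the claim with $q=b/c_*$. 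The main obstacle I anticipate is \emph{orchestrating the two exponential scales}: the weight $e^{aj}$ and the localization $e^{-\beta|k-(p-c_*\tau)|}$ around the traveling soliton must be balanced (requiring $\beta>a$), while simultaneously keeping enough decay in the variable $m+n$ for the unweighted sum over $m$ to converge. A minor technical nuisance is that the formula for $G(y)$ develops a resonance at $\mu c_*=b$; this is easily sidestepped by taking $\mu$ smaller than $b/c_*$, which is permissible since \eqref{eqn:asymptotics} holds with any $\mu\in(0,\beta-a]$.
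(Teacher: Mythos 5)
Your proposal is correct and follows essentially the same route as the paper: both write the difference as the tail integral $\int_{t_n}^\infty U_{c_*}(p/c_*,p/c_*-\tau)\,T^{1,0}(p-c_*\tau,c_*)[\delta_m]\ \d\tau$, apply the weighted stability bound \eqref{eqn:exp} together with the localization estimate of \Cref{lem:Tbound}, and then evaluate the resulting integral--sum to extract the factor $e^{-bt_n}$ (yielding $q=b/c_*$). Your explicit check of the $\Omega$-orthogonality of the forcing needed to invoke \eqref{eqn:exp}, and your repackaging of the final computation via the function $G$ in place of the case formula \eqref{eqn:cases}, are only cosmetic refinements of the paper's argument.
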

Before giving the proof, we prepare two intermediate results. The first regards the weighted norm of the forcing term in~\eqref{eqn:Rinf}.
\begin{lemma}\label{lem:Tbound}Assuming \Cref{hyp:At}, there exist constants $C,\gamma>0$, such that for each $p\in[0,1)$, $m\in\Z$ and $\tau\geq0$, we have
    \begin{align*}
    \|e^{a(\cdot+c_*\tau) }T^{1,0}(p-c_*\tau,c_*)[\delta_m]\|_{\ell^2(\Z;\R^2)}\leq \tilde{C}e^{-\gamma|m+c_*\tau|}.
\end{align*}
\end{lemma}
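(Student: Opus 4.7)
The plan is to insert the explicit formula~\eqref{eqn:Tdelta} into the left-hand side and then use the exponential decay bound~\eqref{eqn:asymptotics} to control the sum. Writing $t = p/c_* - \tau$ so that $c_*t = p - c_*\tau$, the arguments $j-c_*t$ and $m-c_*t$ become $j-p+c_*\tau$ and $m-p+c_*\tau$ respectively, and the weight factors as $e^{a(j+c_*\tau)} = e^{ap}\,e^{a(j-p+c_*\tau)}$, which is the key algebraic identity that lets us pit the $a$-weight against the $\beta$-decay of $T^{1,0}$.

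Starting from~\eqref{eqn:asymptotics}, I would split the upper bound on $T^{1,0}$ into the bilinear piece $C e^{-\beta|j-p+c_*\tau|}e^{-\beta|m-p+c_*\tau|}$ and the localized piece $C e^{-\beta|m-p+c_*\tau|}(\delta_0+\delta_1)(j-m)$, and estimate each separately. For the bilinear piece, after squaring, applying the shift identity and summing in $j$, one obtains a contribution of the form
\begin{equation*}
e^{-2\beta|m-p+c_*\tau|}\cdot e^{2ap}\sum_{y\in\Z+c_*\tau-p}e^{2ay}e^{-2\beta|y|}.
\end{equation*}
Since $\beta>a$, the inner sum is bounded uniformly in $\tau$ and $p$. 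Since $p\in[0,1)$ forces $|m-p+c_*\tau|\geq |m+c_*\tau|-1$, the prefactor is controlled by a constant multiple of $e^{-2\beta|m+c_*\tau|}$, giving the desired decay at rate $\beta$ (better than claimed).

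For the localized piece only $j\in\{m,m+1\}$ contribute, producing
\begin{equation*}
C\,e^{-2\beta|m-p+c_*\tau|}\bigl(e^{2a(m+c_*\tau)}+e^{2a(m+1+c_*\tau)}\bigr).
\end{equation*}
I would split according to the sign of $m+c_*\tau$. When $m+c_*\tau<0$, the factor $e^{2a(m+c_*\tau)}\leq 1$ is harmless and the $\beta$-decay in $|m+c_*\tau|$ is retained (after the same $p\in[0,1)$ shift as above). When $m+c_*\tau\geq 0$, the exponential $e^{2a(m+c_*\tau)}$ combines with $e^{-2\beta|m-p+c_*\tau|}$ to yield decay at the reduced rate $\beta-a>0$. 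Choosing $\gamma:=\beta-a$ and summing the two contributions gives the stated bound.

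The whole argument is essentially careful bookkeeping of weights and shifts; the only substantive point is that the strict inequality $\beta>a$ built into~\eqref{eqn:asymptotics} must survive the absorption of the exponential weight $e^{a(\cdot+c_*\tau)}$ on the delta-supported piece. This is exactly where one could worry that the $a$-weight destroys the decay, and it is why the degraded exponent $\gamma=\beta-a$ (rather than $\beta$) appears in the final bound.
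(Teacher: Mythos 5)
Your proposal is correct and follows essentially the same route as the paper: apply the decay bound~\eqref{eqn:asymptotics}, absorb the weight $e^{a(\cdot+c_*\tau)}$ into the exponentially localized piece using $\beta>a$, and observe that the delta-supported piece forces the degraded rate $\gamma=\beta-a$. Your treatment is slightly more explicit about the harmless $p$-shift and the sign split in $m+c_*\tau$, but the substance is identical to the paper's argument.
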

\begin{proof}
    The estimate~\eqref{eqn:asymptotics} provides
\begin{align*}
    \|e^{a(\cdot+c_*\tau) }T^{1,0}(p-c_*\tau,c_*)[\delta_m]\|_{\ell^2(\Z;\R^2)}\leq& Ce^{-\beta|m+c_*\tau|}\|e^{a(\cdot+c_*\tau) }e^{-\beta|\cdot+c_*\tau|}\|_{\ell^2(\Z;\R)} 
     \\
     &+Ce^{-\beta|m+c_*\tau|}
     \|e^{a(\cdot+c_*\tau) }(\delta_0+\delta_{1})(\cdot-m)\|_{\ell^2(\Z;\R)},\end{align*}
where we note that
\[e^{a(\cdot+c_*\tau) }e^{-\beta|\cdot+c_*\tau|}\in\ell^2(\Z;\R),\]
since $\gamma:=\beta-a>0$. We furthermore compute
\[\|e^{a(\cdot+c_*\tau) }(\delta_0+\delta_{1})(\cdot-m)\|_{\ell^2(\Z;\R)}=\Big(\sum_{j\in\Z}e^{2a(\cdot+c_*\tau) }(\delta_0+\delta_{1})(j-m)\Big)^{1/2}=(1+e^{2a})^{1/2}e^{a(m+c_*\tau)}, \]
and the result follows.
\end{proof}
We proceed by asserting that both $\overline{R}(\cdot,\cdot,t)$ and $\overline{R}^\infty(\cdot,\cdot,p)$ are well-defined in $\ell^2_a(\Z^2;\R^2)$.
\begin{lemma}\label{lem:contained}
Assume \Cref{hyp:At}. For each $t\geq 0$ and $p\in[0,1)$, the response function $\overline{R}(\cdot,\cdot,t)$ and its limit $\overline{R}^\infty(\cdot,\cdot,p)$ are contained in  $\ell^2_a(\mathbb{Z}^2;\mathbb{R}^2)$.
\end{lemma}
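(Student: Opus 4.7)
The plan is to exploit Duhamel representations of $\overline{R}(\cdot,m,t)$ and $\overline{R}^\infty(\cdot,m,p)$, combine them with the exponentially decaying evolution bound from \Cref{hyp:At}, and control the forcing $T^{1,0}$ using its spatial localization. A preliminary observation is that $T^{1,0}(c_*s,c_*)[\delta_m]$ is symplectically orthogonal to both neutral modes $\partial_\xi\phi_{c_*}(\cdot-c_*s)$ and $\partial_c\phi_{c_*}(\cdot-c_*s)$ for every $s\in\R$; this orthogonality is built into the definition~\eqref{eqn:Ls} of $\Gamma^{1,0}$ and $C^{1,0}$, and can be checked by pairing through $\Omega$ and integrating by parts via $\delta_+^*=-\delta_-$. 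It is precisely this property that permits~\eqref{eqn:exp} to be applied directly to the integrand below.

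For $\overline{R}(\cdot,\cdot,t)$, I would start from the Duhamel identity
\[
\overline{R}(\cdot,m,t) = \int_0^t U_{c_*}(t,s) T^{1,0}(c_*s,c_*)[\delta_m]\,\d s,
\]
apply~\eqref{eqn:exp}, and bound the integrand via the lab-frame analog of \Cref{lem:Tbound},
\[
\|e^{a(\cdot-c_*s)} T^{1,0}(c_*s,c_*)[\delta_m]\|_{\ell^2(\Z;\R^2)} \leq C e^{-\gamma|m-c_*s|},
\]
which follows directly from~\eqref{eqn:asymptotics} by the same calculation as in the proof of \Cref{lem:Tbound}. Squaring, summing over $m$, applying Cauchy--Schwarz in the time integral, and using the uniform-in-$s$ bound $\sum_{m\in\Z} e^{-2\gamma|m-c_*s|} \leq C_\gamma$ yields a uniform-in-$t$ bound on $\sum_{m\in\Z} \|e^{a(\cdot-c_*t)}\overline{R}(\cdot,m,t)\|_{\ell^2}^2$. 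Multiplying by the constant $e^{2ac_*t}$ then converts this into the desired $\ell^2_a(\Z^2;\R^2)$ bound on $\overline{R}(\cdot,\cdot,t)$.

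For $\overline{R}^\infty(\cdot,\cdot,p)$, the strategy mirrors the previous one after using the Green's function identity~\eqref{eqn:form} to rewrite~\eqref{eqn:Rinf} as
\[
\overline{R}^\infty(\cdot,m,p) = \int_0^\infty U_{c_*}(p/c_*, p/c_*-\tau) T^{1,0}(p-c_*\tau,c_*)[\delta_m]\,\d\tau.
\]
Applying~\eqref{eqn:exp} with $t=p/c_*$ and $s=p/c_*-\tau$, then invoking \Cref{lem:Tbound} directly, leads to
\[
\|e^{a(\cdot-p)}\overline{R}^\infty(\cdot,m,p)\|_{\ell^2(\Z;\R^2)} \leq K\tilde{C}\int_0^\infty e^{-b\tau} e^{-\gamma|m+c_*\tau|}\,\d\tau.
\]
A short case analysis in the sign of $m$ (with a split at $\tau=-m/c_*$ when $m<0$) shows the right-hand side decays like $e^{-\mu|m|}$ for some $\mu>0$, giving the square-summability in $m$ needed for $\overline{R}^\infty(\cdot,\cdot,p)\in\ell^2_a(\Z^2;\R^2)$.

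The hard part will be the $m<0$ analysis for $\overline{R}^\infty$: the spatial support of $T^{1,0}(p-c_*\tau,c_*)[\delta_m]$ centers near $j\approx c_*\tau$, which rides out into the high-weight region $e^{aj}\gg 1$ precisely when $\tau\approx -m/c_*$, while the stability factor $e^{-b\tau}$ only tempers this growth at rate $b$. The effective decay rate in $|m|$ ends up being $\min(b/c_*,\gamma)$, and one must verify that both contributions remain positive throughout. Beyond this, the proof is essentially careful bookkeeping of weights across the lab and co-moving frames, together with the algebraic verification of the orthogonality of the forcing.
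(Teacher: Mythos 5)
Your proof is correct and follows essentially the same route as the paper: the Duhamel representation combined with the stability bound~\eqref{eqn:exp} and the weighted forcing estimate of \Cref{lem:Tbound}, followed by the case analysis of $\int e^{-b\tau}e^{-\gamma|m+c_*\tau|}\,\d\tau$ in the sign of $m$, which is exactly the computation recorded in~\eqref{eqn:cases}. The only (harmless) deviations are that you treat the finite-time case directly in the lab frame via Cauchy--Schwarz rather than shifting by $\lfloor c_*t\rfloor$ as the paper does, and that you make explicit the symplectic orthogonality of $T^{1,0}(c_*s,c_*)[\delta_m]$ to the neutral modes---a point the paper uses implicitly when invoking~\eqref{eqn:exp}.
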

\begin{proof}
 From~\eqref{eqn:Rinf}, we compute
\begin{align*}
    \| \overline{R}^\infty(\cdot,m,p)\|^2_{\ell^2_a(\Z^2;\R^2)}
    =&\sum_{m\in \Z}\Big\|\int_{0}^{\infty} \sum_{k\in\Z}e^{a\cdot}\mathcal{G}_{c_*}(\tau,j-p,k-p+c_*\tau)T^{1,0}(p-c_*\tau,c_*)[\delta_m](k)\ \d \tau\Big\|^2_{\ell^2(\Z;\R^2)}\\
    =&\sum_{m\in \Z}\Big\|\int_{0}^{\infty} e^{a\cdot}U_{c_*}(p/c_*,p/c_*-\tau)T^{1,0}(p-c_*\tau,c_*)[\delta_m]\ \d \tau\Big\|_{\ell^2(\Z;\R^2)}^2.
    \end{align*}
    We then apply the exponential stability bound~\eqref{eqn:exp}: 
    \begin{align*}
    \| \overline{R}^\infty(\cdot,m,p)\|^2_{\ell^2_a(\Z^2;\R^2)}
    \leq & \sum_{m\in \Z}\Big(\int_{0}^{\infty} \|e^{a\cdot}U_{c_*}(p/c_*,p/c_*-\tau)T^{1,0}(-c_*\tau,c_*)[\delta_m]\|_{\ell^2(\Z;\R^2)}\ \d \tau\Big)^2\\
    \leq &K^2\sum_{m\in \Z}\Big(\int_{0}^{\infty}e^{-b\tau} \|e^{a(\cdot+c_*\tau)}T^{1,0}(p-c_*\tau,c_*)[\delta_m]\|_{\ell^2(\Z;\R^2)}\ \d \tau\Big)^2.
\end{align*}
Via \Cref{lem:Tbound}, we then obtain
\begin{align*}
 \| \overline{R}^\infty(\cdot,m,p)\|^2_{\ell^2_a(\Z^2;\R^2)}
  \leq  K^2\tilde{C}^2 \sum_{m\in\Z}\Big(\int_{0}^\infty e^{-b\tau}e^{-\gamma|m+c_*\tau|}\d \tau\Big)^2.
\end{align*}
A straightforward computation shows that for each $t\geq 0$
\begin{align}
\int_{t}^{\infty} e^{-b\tau}\,e^{-\gamma|m+c_*\tau|}\d\tau =
\begin{cases}
\dfrac{e^{-\gamma(c_*t+m)}\,e^{-bt}}{b/c_*+\gamma}, & c_*t+m \ge 0, \\[6pt]
\dfrac{e^{bm/c_*}-e^{\gamma(c_*t+m)}e^{-bt}}{\gamma-b/c_*}+\dfrac{e^{bm/c_*}}{b/c_*+\gamma}, & c_*t+m \le 0.
\end{cases}\label{eqn:cases}
\end{align}
In particular,
\begin{align*}
    \sum_{m\in\Z}\Big(\int_{0}^\infty e^{-b\tau}e^{-\gamma|m+c_*\tau|}\d \tau\Big)^2 \leq \tilde{C}\Big(\sum_{m=0}^\infty e^{-2m}+e^{-2bm}\Big)<\infty.
\end{align*}
We have thus shown that $\overline{R}^\infty(\cdot,\cdot,p)$ lies in $\ell^2_a(\Z^2;\R^2)$. An analogous computation based on~\eqref{eqn:shiftgreen} shows that $\overline{R}(\cdot+n,\cdot+n,t_n)$---and consequently its translate $\overline{R}(\cdot,\cdot,t_n)$---is also contained in $\ell^2_a(\Z^2;\R^2)$.
\end{proof}
We are then ready to prove \Cref{prop:Rinf}.
\begin{proof}[Proof of \Cref{prop:Rinf}]
We apply the same estimates as in \Cref{lem:contained} to the difference
\begin{align*}
    \mathcal{R}_n:=&\sum_{m\in \Z}\|e^{a\cdot}\big(\overline{R}(\cdot+n,m+n,t_n)- \overline{R}^\infty(\cdot,m,p)\big)\|^2_{\ell^2(\Z;\R^2)}\\
    =&\sum_{m\in \Z}\Big\|\int_{t_n}^{\infty} e^{a\cdot}U_{c_*}(p/c_*,p/c_*-\tau)T^{1,0}(p-c_*\tau,c_*)[\delta_m]\ \d \tau\Big\|_{\ell^2(\Z;\R^2)}^2.
    \end{align*}
Applying the stability bound~\eqref{eqn:exp} gives
\begin{align*}
    \mathcal{R}_n\leq &K^2\sum_{m\in \Z}\Big(\int_{t_n}^{\infty}e^{-b\tau} \|e^{a(\cdot+c_*\tau)}T^{1,0}(p-c_*\tau,c_*)[\delta_m]\|_{\ell^2(\Z;\R^2)}\ \d \tau\Big)^2\\
\leq & K^2\tilde{C}^2 \sum_{m\in\Z}\Big(\int_{t_n}^\infty e^{-b\tau}e^{-\gamma|m+c_*\tau|}\d \tau\Big)^2.
\end{align*}
Inspecting~\eqref{eqn:cases}, we now arrive at
\begin{align*}
    \sum_{m\in\Z}\Big(\int_{t}^\infty e^{-b\tau}e^{-\gamma|m+c_*\tau|}\d \tau\Big)^2 \leq& \tilde{C}\Big(e^{-2bt}\sum_{m\geq-t} e^{-2(t+m)}+\sum_{m\leq-t}e^{2bm}+e^{-2bt}\sum_{m\leq-t}e^{2(t+m)}\Big)\\
    \leq &3\tilde{C}\frac{e^{2b}}{e^{2b}-1}e^{-2bt},
\end{align*}
which shows~\eqref{eqn:exprate}.
\end{proof}

\subsection{Asymptotic tail}
Following the convergence of $\overline{R}$ to $\overline{R}^\infty$, we now turn to  \Cref{prop:stationary}, concerning the convergence of the shifted tail to $\overline{\eta}^\infty$. We will see that $\overline{\eta}^\infty$ can be represented by the random series
\begin{align}\overline{\eta}^\infty(x)=\sum_{m\in\Z}\zeta(m- \lceil x\rceil)\overline{R}^\infty(\lceil x \rceil,m,\lceil x \rceil-x),\quad x\in\R,\label{eqn:etainf}\end{align}
where $\zeta(j)$, $j\in\Z$ is an i.i.d. sequence with the same distribution as the random coefficients $\kappa(j)$. Regarding the continuity of this representation (\Cref{rem:continuity}), we note that
\[\overline{R}^\infty(j,m,1)=\overline{R}^\infty(j-1,m-1,0),\quad j,m\in\Z,\]
which shows that the specific representation~\eqref{eqn:etainf} satisfies
\[\lim_{x\downarrow j}\overline{\eta}^\infty(x)=\overline{\eta}^\infty(j),\quad j\in\Z.\]
In between lattice points $x\notin \Z$, the continuity of $\overline{\eta}^\infty$ is determined by that of $\overline{R}^\infty(j,m,p)$ in the $p$-variable, for which we do not pursue a proof here. The covariance of $\overline{\eta}^\infty=(\overline{\eta}_r^\infty,\ \overline{\eta}_p^\infty)^\top$ is given by
\[\mathbb{E}\big(\overline{\eta}_{r}^\infty(x)\overline{\eta}_{r}^\infty(y)\big)=\Big\langle \overline{R}_{r}^\infty(\lfloor x\rfloor,\cdot,x-\lfloor x\rfloor),\overline{R}_{r}^\infty(\lfloor y\rfloor,\cdot,y-\lfloor y\rfloor)\Big\rangle_{\ell^2(\Z;\R)},\]
with an analogous expression for $\overline{\eta}_{p}^\infty$. See \Cref{fig:tail} for a numerical evaluation.
\begin{figure}[h]
    \centering
    \begin{subfigure}[t]{0.48\textwidth}
        \centering
        \includegraphics[width=\linewidth]{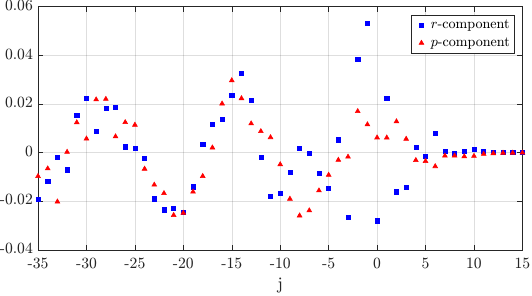}
    \end{subfigure}
    \hfill
    \begin{subfigure}[t]{0.48\textwidth}
        \centering
        \includegraphics[width=\linewidth]{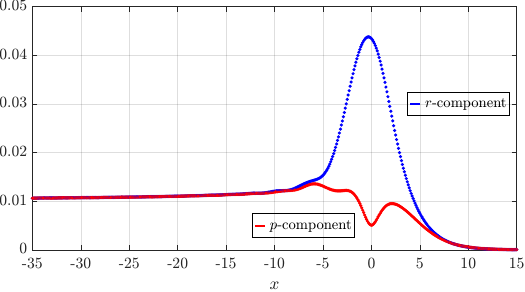}
    \end{subfigure}

    \caption{Realization (left) and standard deviation (right) of the limiting tail $\overline{\eta}^\infty$, based on $\overline{\eta}=\eta_1^\mathrm{h}$. ( $c_*=1.015$ and $\zeta(i)\in \text{Unif}([-\sqrt{3},\sqrt{3}])$.) }
    \label{fig:tail}
\end{figure}
We are then ready to prove the main result of this section.
\begin{proof}[Proof of \Cref{prop:stationary}]
Once again, we pick $p\in[0,1)$ and write $t_n=(n+p)/c_*$, so that at $x=j-p$, the representation~\eqref{eqn:etainf} reads
\[\overline{\eta}^\infty(j-p)
=\sum_{m\in\Z}\zeta(m- j)\overline{R}^\infty(j,m,p),\]
where $\zeta(j)$, $j\in\Z$ is an i.i.d. sequence with the same distribution as the random coefficients $\kappa(j)$.
We introduce
\[Y_n(j)= \sum_{m\in\Z}\zeta(m-j)\overline{R}(j+n,m+n,t_n),\]
which, for each $n\in\N$ and $j\in\Z$, has the same law as
\[\overline{\eta}(j+n,t_n)=\sum_{m\in\Z}\kappa(m+n)\overline{R}(j+n,m+n,t_n).\] 
Indeed, the i.i.d. sequences $\kappa$ and $\zeta$ follow the same distribution. 
We furthermore have
    \begin{align*}
    &Y_n(j)- \overline{\eta}^\infty(j-p)
  =\sum_{m\in\Z}\zeta(m-j)\Big(\overline{R}(j+n,m+n,t_n)-\overline{R}^\infty(j,m,p)\Big)
\end{align*}
and as a consequence of \Cref{prop:Rinf}:
\begin{align*}
    &\mathbb{E}[(Y_n(j)- \overline{\eta}^\infty(j-p))^2]
  =\sum_{m\in\Z}\Big(\overline{R}(j+n,m+n,t_n)-\overline{R}^\infty(j,m,p)\Big)^2\to 0
\end{align*}
as $n\to \infty$. In particular, $Y_n(j)\stackrel{d}{\longrightarrow} \overline{\eta}^\infty(j-p)$. It follows that also \[ \overline{\eta}(j+n,t_n)\stackrel{d}{\longrightarrow}\overline{\eta}^\infty(j-p).\qedhere\]
\end{proof}

\section{Amplitude Attenuation}\label{sec:attenuation}
We finally return to the main goal of this work: capturing the amplitude attenuation that affects the propagation of solitary waves in~\eqref{eqn:FPU}. We build on the results of \Cref{sec:radiative}, where we analyzed linear tail approximations $\overline{\eta}$ based on a general time-dependent linearization $\mathcal{J}\mathcal{A}_{c_*t,c_*}$; see~\eqref{eqn:etaline} and \Cref{hyp:At}. The representation of $\overline{\eta}(t)$ as a random series (\Cref{prop:randomsum}) allows us to compute expectations of the second-order corrections $\overline{\gamma}_2(t)$ and $\overline{c}_2(t)$, defined via~\eqref{eqn:overline}. Although the random coefficients $\kappa$---and by extension the linear tail $\overline{\eta}(t)$---are mean-zero, the quadratic terms in~\eqref{eqn:overline} lead to a non-zero expectation of $\dot{\overline{\gamma}}_2(t)$ and  $\dot{\overline{c}}_2(t)$.

In \Cref{prop:expectation} below, we give an explicit representation for the amplitude attenuation via the bilinear maps $\Gamma^{1,1}, \Gamma^{0,2}, C^{1,1}$ and $C^{0,2}$ introduced in~\eqref{eqn:bilinear1}--\eqref{eqn:bilinear2}, and the response function $\overline{R}$ introduced in~\eqref{eqn:response}. Recall that $\ell^2_a(\mathbb{Z}^2;\mathbb{R}^2)$ denotes the exponentially weighted spaces
\[
\ell^2_a(\mathbb{Z}^2;\mathbb{R}^2)
:= \left\{ f:\mathbb{Z}^2\to\mathbb{R}^2 \;\middle|\; \|f\|_{\ell^2_a(\mathbb{Z}^2;\mathbb{R}^2)}<\infty \right\},\]
with norm
\[
\|f\|_{\ell^2_a(\mathbb{Z}^2;\mathbb{R}^2)}^2
= \sum_{m\in\mathbb{Z}} \|e^{a\cdot} f(\cdot,m)\|_{\ell^2(\Z;\R^2)}^2 .
\]
For $\xi\in\R$, $c\in(c_-,c_+)$, we define the linear map $M^{1,1}(\xi,c):\ell^2_a(\Z^2;\R^2)\to\R^2$ via
\begin{align}M^{1,1}(\xi,c)[\alpha]=\sum_{m\in\Z}\begin{bmatrix}
        \Gamma^{1,1}(\xi,c)\\
        C^{1,1}(\xi,c)
    \end{bmatrix}[\delta_m,\alpha(\cdot,m)],\label{eqn:MLdef}
\end{align}
and the bilinear map $M^{0,2}(\xi,c):\ell^2_a(\Z^2;\R^2)\times \ell^2_a(\Z^2;\R^2)\to\R^2$ by
\begin{align}M^{0,2}(\xi,c)[\alpha,\beta]=\sum_{m\in\Z}\begin{bmatrix}
        \Gamma^{0,2}(\xi,c)\\
        C^{0,2}(\xi,c)
    \end{bmatrix}[\alpha_{r}(\cdot,m),\beta_{r}(\cdot,m)].\label{eqn:MQdef}
\end{align}
Both $M^{1,1}$ and $M^{0,2}$ are bounded (see \Cref{app:continuity}) and we recall that $\overline{R}(\cdot,\cdot,t)$ is contained in $\ell^2_a(\Z^2;\R^2)$ for each $t\geq0$ on account of \Cref{lem:contained}. Finally, we define $M^{1,0}_{\RomanI}(\xi,c),\ldots,M^{1,0}_{\RomanIV}(\xi,c)$ as the correlations
\begin{align}M^{1,0}_{\RomanI}(\xi,c)=&-  \frac{\alpha^{-2}_0(c)\alpha_1(c)}{2c}\mathbb{E}\ \sum_{j\in\Z}\kappa(j)[r^2_{c}(j-\xi)-r^2_{c}(j)]\partial_\xi\begin{bmatrix}
        \Gamma^{1,0}(\xi,c)\\
        C^{1,0}(\xi,c)
    \end{bmatrix}[\kappa],\label{eqn:corM1}\\
    M^{1,0}_{\RomanII}(\xi,c)=&\frac{1}{2c\alpha_0(c)}\mathbb{E}\ \sum_{j\in\Z}\kappa(j)\int_0^{\xi}\partial_c r^2_{c}(j-s)\d s \ \partial_\xi\begin{bmatrix}
        \Gamma^{1,0}(\xi,c)\\
        C^{1,0}(\xi,c)
    \end{bmatrix}[\kappa],\label{eqn:corM2}\\
    M^{1,0}_{\RomanIII}(\xi,c)=&- \frac{\alpha_0^{-1}(c)}{2c^2}\mathbb{E}\  \sum_{j\in\Z}\kappa(j)\Big(\int_0^{\xi} r^2_{c}(j-s) \ \d s -\xi r^2_{c}(j)\Big)\ \partial_\xi\begin{bmatrix}
        \Gamma^{1,0}(\xi,c)\\
        C^{1,0}(\xi,c)
    \end{bmatrix}[\kappa]\label{eqn:corM3},\end{align}
    and
    \begin{align}\label{eqn:corM4}
    M^{1,0}_{\RomanIV}(\xi,c)=&- \frac{\alpha_0^{-1}(c)}{2c}\mathbb{E} \ \sum_{j\in\Z}\kappa(j)[r^2_{c}(j-\xi)-r^2_{c}(j)]\partial_c\begin{bmatrix}
        \Gamma^{1,0}(\xi,c)\\
        C^{1,0}(\xi,c)
    \end{bmatrix}[\kappa].
    \end{align}
We refer to \Cref{app:correlations} for a fully deterministic representation. As a preparation, we make the following observation regarding these correlations.
\begin{lemma}[See \Cref{app:correlations}]\label{lem:correlations}Assume \Cref{hyp:coeff} and \Cref{hyp:At}. For each $\xi\in\R, c\in(c_-,c_+)$ and $i\in\{I,\ldots,IV\}$, the correlation $M^{1,0}_i(\xi,c)$ can be decomposed as
    \[M_i^{1,0}(\xi,c)=M^{1,0}_{i,\mathrm{per}}(\xi,c)+M^{1,0}_{i,\mathrm{trans}}(\xi,c),\]
    with $M^{1,0}_{i,\mathrm{per}}(\xi,c)$ periodic in $\xi$:
    \[M^{1,0}_{i,\mathrm{per}}(\xi,c)=M^{1,0}_{i,\mathrm{per}}(\xi+d,c),\quad d\in\Z,\]
    and $M^{1,0}_{i,\mathrm{trans}}(\xi,c)$ exponentially decaying in $|\xi|$: there exist constants $C,q>0$ such that
    \[|M^{1,0}_{i,\mathrm{trans}}(\xi,c)|\leq Ce^{-q|\xi|}.\]
\end{lemma}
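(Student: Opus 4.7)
The plan is to evaluate the expectation defining each $M^{1,0}_i$ explicitly via the i.i.d.\ structure of $\kappa$, reducing it to a deterministic lattice sum, and then extract the periodic+decaying decomposition from the resulting kernel. In each of~\eqref{eqn:corM1}--\eqref{eqn:corM4} both random factors inside the expectation are linear in $\kappa$: the first is an explicit sum $\sum_j \kappa(j)\, F_i(j,\xi,c)$, while the second, by~\eqref{eqn:Ls}, takes the form $\sum_k \kappa(k)\, D(k-\xi,c)$ with $D$ an exponentially-localized profile built from second derivatives and $c$-derivatives of $r_c^2$. Invoking $\mathbb{E}[\kappa(j)\kappa(k)]=\delta_{jk}$ then collapses each double sum to
\[
M^{1,0}_i(\xi,c) \;=\; \mathrm{const}_i(c)\sum_{j\in\mathbb{Z}} F_i(j,\xi,c)\, D(j-\xi,c).
\]

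For the ``easy'' cases $M^{1,0}_I$ and $M^{1,0}_{IV}$, the kernel $F_i(j,\xi,c) = r_c^2(j-\xi)-r_c^2(j)$ cleanly splits the sum in two. The shift-invariant part $\sum_j r_c^2(j-\xi)\, D(j-\xi)$ depends on $\xi$ only through its fractional part, as seen via the integer substitution $j' = j-\lfloor\xi\rfloor$; I take this as the sought $1$-periodic piece $M^{1,0}_{i,\mathrm{per}}$. The remainder $-\sum_j r_c^2(j)\, D(j-\xi)$ is estimated using the exponential localizations $|r_c^2(j)|\lesssim e^{-\beta|j|}$ and $|D(j-\xi)|\lesssim e^{-\beta|j-\xi|}$ granted by \cite[Proposition 5.5]{FPI}, combined with the triangle inequality $|j|+|j-\xi|\geq|\xi|$ to extract a decaying prefactor $e^{-q|\xi|}$.

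For $M^{1,0}_{II}$ and $M^{1,0}_{III}$ I would rewrite the integrals in the first factor as differences of primitives: $\int_0^\xi \partial_c r_c^2(j-s)\,\d s = \Psi(j)-\Psi(j-\xi)$, with $\Psi$ the antiderivative of $\partial_c r_c^2$ vanishing at $-\infty$, and analogously a primitive $R$ of $r_c^2$ for $M^{1,0}_{III}$. The $\Psi(j-\xi)D(j-\xi)$ and $R(j-\xi)D(j-\xi)$ contributions are again $1$-periodic in $\xi$ by integer shift. The delicate terms are $\sum_j \Psi(j)D(j-\xi)$ and $\sum_j R(j)D(j-\xi)$, because these primitives do not decay at infinity. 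Here I would write $\Psi(j)=L+(\Psi(j)-L)$ with $L=\Psi(+\infty)$, absorb the $L\sum_j D(j-\xi)$ piece (itself a $1$-periodic function of $\xi$) into $M^{1,0}_{i,\mathrm{per}}$, and exploit the exponential decay of $\Psi-L$ for $j\to+\infty$, where the localization of $D(j-\xi)$ is effective when $\xi\gg 0$. The additional $\xi\, r_c^2(j)$ term in $M^{1,0}_{III}$ yields at worst a bound $|\xi|e^{-\beta|\xi|}\lesssim e^{-q|\xi|}$ for any $q<\beta$, which is absorbed into the transient decay rate.

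The main obstacle will be handling the non-decaying primitives $\Psi$ and $R$ consistently at both $\xi\to+\infty$ and $\xi\to-\infty$, since they have different asymptotic limits at the two ends. This forces a sign-based case split on $\xi$, using $\Psi(j)-L$ for $\xi\gg 0$ and $\Psi(j)$ itself for $\xi\ll 0$ as the exponentially-decaying piece, together with a careful verification that the periodic contributions extracted in the two regimes agree on the overlap so that $M^{1,0}_{i,\mathrm{per}}$ defines a single globally $1$-periodic function of $\xi$. Once these bookkeeping steps are in place, all remaining estimates reduce to weighted $\ell^1$-bounds on products of exponentially localized profiles.
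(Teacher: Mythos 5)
Your proposal follows essentially the same route as the paper's proof in \Cref{app:correlations}: evaluate the expectation via $\mathbb{E}[\kappa(j)\kappa(k)]=\delta_{jk}$ to reduce each $M^{1,0}_i$ to a deterministic lattice sum, peel off the part depending only on $j-\xi$ (equivalently, on the fractional part of $\xi$) as the periodic piece, and control the remainder through the exponential localization of the wave profiles; your primitive-based splitting $\Psi(j)-\Psi(j-\xi)$ with the constant $L=\Psi(+\infty)$ absorbed into the periodic part is, after a change of variables, exactly the paper's decomposition $\int_0^{\xi}=\int_0^{\infty}-\int_{\xi}^{\infty}$ for cases \RomanII\ and \RomanIII. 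The ``main obstacle'' you flag at $\xi\to-\infty$ is a genuine subtlety---the paper's own transient parts for \RomanII\ and \RomanIII\ decay only as $\xi\to+\infty$---but it is harmless here, since the lemma is only invoked along $\xi=c_*t\geq 0$.
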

Upon introducing
\begin{align*}
    M^{1,0}(\xi,c)=&M_{\mathrm{per}}^{1,0}(\xi,c)+M_{\mathrm{trans}}^{1,0}(\xi,c)\\
    =&M^{1,0}_{\RomanI}(\xi,c)+M^{1,0}_{\RomanII}(\xi,c)+M^{1,0}_{\RomanIII}(\xi,c)+M^{1,0}_{\RomanIV}(\xi,c),
\end{align*}
we then obtain the following.
\begin{proposition}\label{prop:expectation}
Assume \Cref{hyp:coeff} and \Cref{hyp:At}. For each $t\geq 0$, we have

\begin{align}\label{eqn:shift}
    \mathbb{E}\begin{bmatrix}
       \dot{\overline{\gamma}}_2(t)\\
        \dot{\overline{c}}_2(t)
    \end{bmatrix}
    =&M^{1,0}(c_*t,c_*)+M^{0,2}(c_*t,c_*)[\overline{R}(\cdot,\cdot,t),\overline{R}(\cdot,\cdot,t)]+M^{1,1}(c_*t,c_*)[\overline{R}(\cdot,\cdot,t)].\end{align}
\end{proposition}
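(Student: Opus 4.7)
The plan is to take the expectation of~\eqref{eqn:overline} term by term, and match each resulting piece with the corresponding $M^{\cdot,\cdot}$ map. The four bracketed groups on the right-hand side of~\eqref{eqn:overline} are linear or bilinear in the random coefficients $\kappa$ (once we use the random series representation of $\overline{\eta}$), so after taking expectations only the diagonal contributions $\mathbb{E}[\kappa(m)\kappa(m')]=\delta_{mm'}$ survive. This is what turns the formally random expressions into the deterministic quantities $M^{1,0}, M^{1,1}, M^{0,2}$.

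First I would substitute the explicit formulas~\eqref{eqn:c1explicit},~\eqref{eqn:Y1},~\eqref{eqn:Y2} for $c_1$ and $\gamma_1=\gamma_{1;\RomanI}+\gamma_{1;\RomanII}$ together with the identity
\[\int_0^t c_1(s)\,\d s = -\frac{\alpha_0^{-1}(c_*)}{2c_*^2}\sum_{j\in\Z}\kappa(j)\Bigl(\int_0^{c_*t}r^2_{c_*}(j-s)\,\d s - c_*t\,r^2_{c_*}(j)\Bigr),\]
obtained via the change of variables $s\mapsto c_* s$. Comparing with~\eqref{eqn:corM1}--\eqref{eqn:corM4} then shows, after taking expectations, that $\mathbb{E}[(\gamma_{1;\RomanI}(t)+\gamma_{1;\RomanII}(t)+\int_0^tc_1(s)\d s)\,\partial_\xi(\Gamma^{1,0},C^{1,0})(c_*t,c_*)[\kappa]]$ contributes exactly $M^{1,0}_{\RomanI}+M^{1,0}_{\RomanII}+M^{1,0}_{\RomanIII}$ evaluated at $(c_*t,c_*)$, while $\mathbb{E}[c_1(t)\,\partial_c(\Gamma^{1,0},C^{1,0})(c_*t,c_*)[\kappa]]$ produces $M^{1,0}_{\RomanIV}(c_*t,c_*)$. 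Summing these four pieces gives $M^{1,0}(c_*t,c_*)$.

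For the last two groups I would invoke \Cref{prop:randomsum} to write $\overline{\eta}(\cdot,t)=\sum_{m\in\Z}\kappa(m)\overline{R}(\cdot,m,t)$ and $\kappa=\sum_{m\in\Z}\kappa(m)\delta_m$, and then exploit the bilinearity of $\Gamma^{1,1},C^{1,1},\Gamma^{0,2},C^{0,2}$ to expand
\[\begin{bmatrix}\Gamma^{1,1}\\ C^{1,1}\end{bmatrix}(c_*t,c_*)[\kappa,\overline{\eta}(t)] =\sum_{m,m'\in\Z}\kappa(m)\kappa(m')\begin{bmatrix}\Gamma^{1,1}\\ C^{1,1}\end{bmatrix}(c_*t,c_*)[\delta_m,\overline{R}(\cdot,m',t)]\]
and the analogous double series for the $(\Gamma^{0,2},C^{0,2})$ term. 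Taking expectations and using $\mathbb{E}[\kappa(m)\kappa(m')]=\delta_{m,m'}$ collapses each double sum to a single one matching the definitions~\eqref{eqn:MLdef}--\eqref{eqn:MQdef} of $M^{1,1}[\overline{R}]$ and $M^{0,2}[\overline{R},\overline{R}]$.

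The main obstacle is the rigorous interchange of expectation with these (doubly) infinite sums, together with the extension of the bilinear maps $\Gamma^{1,1},C^{1,1},\Gamma^{0,2},C^{0,2}$ from the dense subspace of finitely supported pairs to the $\ell^2_a$ inputs we actually have. Both difficulties are handled by the uniform exponential localization of the wave profile and of $\overline{R}$: \Cref{lem:contained} places $\overline{R}(\cdot,\cdot,t)$ in $\ell^2_a(\Z^2;\R^2)$, the boundedness of $M^{1,1}$ and $M^{0,2}$ (\Cref{app:continuity}) gives absolute summability, and \Cref{hyp:coeff} (bounded $\kappa$) then permits Fubini. A continuity/density argument upgrades the bilinear identities to $\ell^2_a$-valued arguments, after which summing the four expectations yields~\eqref{eqn:shift}.
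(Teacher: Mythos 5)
Your proposal is correct and follows essentially the same route as the paper: expand the gradient terms using the explicit formulas for $c_1$ and $\gamma_{1;\RomanI},\gamma_{1;\RomanII}$ to identify $M^{1,0}_{\RomanI},\ldots,M^{1,0}_{\RomanIV}$, then use the random-series representation $\overline{\eta}=\sum_m\kappa(m)\overline{R}(\cdot,m,t)$ together with bilinearity and $\mathbb{E}[\kappa(m)\kappa(m')]=\delta_{mm'}$ to collapse the double sums into $M^{0,2}[\overline{R},\overline{R}]$ and $M^{1,1}[\overline{R}]$. Your additional remarks on justifying the interchange of expectation and summation via the boundedness of $M^{1,1}$, $M^{0,2}$ on $\ell^2_a(\Z^2;\R^2)$ and the boundedness of $\kappa$ are a welcome (if slightly more careful) supplement to what the paper does implicitly.
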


\begin{proof}

By \Cref{hyp:coeff}, the random coefficients satisfy 
\[\mathbb{E}[\kappa(j)]=0 \mand \mathbb{E}[\kappa(i)\kappa(j)]=\delta_{ij},\quad i,j\in\Z.\]
We proceed by computing the expectation of~\eqref{eqn:overline}
using the representation
\[\overline{\eta}(j,t)=\sum_{m\in\Z}\kappa(m)\overline{R}(j,m,t).\]
Via the definitions~\eqref{eqn:corM1}--\eqref{eqn:corM4}, the gradient terms in~\eqref{eqn:overline} satisfy
\begin{align*}\mathbb{E}\ \gamma_{1,\RomanI}(t)\partial_\xi\begin{bmatrix}
        \Gamma^{1,0}(c_*t,c_*)\\
        C^{1,0}(c_*t,c_*)
    \end{bmatrix}[\kappa]=M^{1,0}_{\RomanI}(c_*t,c_*),\\
    \mathbb{E}\ \gamma_{1,\RomanII}(t)\partial_\xi\begin{bmatrix}
        \Gamma^{1,0}(c_*t,c_*)\\
        C^{1,0}(c_*t,c_*)
    \end{bmatrix}[\kappa]=M^{1,0}_{\RomanII}(c_*t,c_*),\\
    \mathbb{E}\ \int_0^t c_1(s) \ \d s \partial_\xi\begin{bmatrix}
        \Gamma^{1,0}(c_*t,c_*)\\
        C^{1,0}(c_*t,c_*)
    \end{bmatrix}[\kappa]=M^{1,0}_{\RomanIII}(c_*t,c_*),\end{align*}
    and
    \begin{align*}
    \mathbb{E} \ c_1(t)\partial_c\begin{bmatrix}
        \Gamma^{1,0}(c_*t,c_*)\\
        C^{1,0}(c_*t,c_*)
    \end{bmatrix}[\kappa]=M^{1,0}_{\RomanIV}(c_*t,c_*) .
    \end{align*}
Here, we recall that the components in $\gamma_{1,\RomanI}(t)$ and $\gamma_{1,\RomanII}(t)$ sum to $\gamma_1(t)$, and are defined in~\eqref{eqn:Y1} and~\eqref{eqn:Y2}, respectively. We proceed with the $\mathcal{O}(\overline{\eta}_r^2)$ term in~\eqref{eqn:overline}. The bilinearity of $\Gamma^{0,2}(c_*t,c_*)$ and $C^{0,2}(c_*t,c_*)$ implies
\begin{align*}
     \mathbb{E}\begin{bmatrix}
        \Gamma^{0,2}(c_*t,c_*)\\
        C^{0,2}(c_*t,c_*)
    \end{bmatrix}[\overline{\eta}_{r}(t),\overline{\eta}_{r}(t)]
     =&\mathbb{E}\begin{bmatrix}
        \Gamma^{0,2}(c_*t,c_*)\\
        C^{0,2}(c_*t,c_*)
    \end{bmatrix}\Big[\sum_{m\in\Z}\kappa(m)\overline{R}_{r}(\cdot,m,t),\sum_{m'\in\Z}\kappa(m')\overline{R}_{r}(\cdot,m',t)\Big]\\
     =&\sum_{m\in\Z}\sum_{m'\in\Z}\mathbb{E}[\kappa(m)\kappa(m')]\begin{bmatrix}
        \Gamma^{0,2}(c_*t,c_*)\\
        C^{0,2}(c_*t,c_*)
    \end{bmatrix}[\overline{R}_{r}(\cdot,m,t),\overline{R}_{r}(\cdot,m',t)]\\
     =&M^{0,2}(c_*t,c_*)[\overline{R}(\cdot,\cdot,t),\overline{R}(\cdot,\cdot,t)].
     \end{align*}
Similarly, 
\begin{align*}\mathbb{E}\begin{bmatrix}
         \Gamma^{1,1}(c_*t,c_*)\\
         C^{1,1}(c_*t,c_*)
     \end{bmatrix}[\kappa,\overline{\eta}(t)]=&\mathbb{E}\begin{bmatrix}
         \Gamma^{1,1}(c_*t,c_*)\\
         C^{1,1}(c_*t,c_*)
     \end{bmatrix}\Big[\sum_{m\in\Z}\kappa(m)\delta(\cdot-m),\sum_{m'\in\Z}\kappa(m')\overline{R}(\cdot,m',t)\Big]\\
     =&\sum_{m\in\Z}\sum_{m'\in\Z}\mathbb{E}[\kappa(m)\kappa(m')]\begin{bmatrix}
         \Gamma^{1,1}(c_*t,c_*)\\
         C^{1,1}(c_*t,c_*)
     \end{bmatrix}\Big[\delta_m,\overline{R}(\cdot,m',t)\Big]\\
     =& M^{1,1}(c_*t,c_*)[\overline{R}(\cdot,\cdot,t)]. \qedhere\end{align*}
\end{proof}
As a consequence of \Cref{prop:Rinf}---the convergence of $\overline{R}$ to $\overline{R}^\infty$ (defined in~\eqref{eqn:Rinf}) in the weighted space $\ell^2_a(\Z^2;\R^2)$---implies that the amplitude attenuation $\mathbb{E}[\dot{\overline{c}}_2(t)]$ also converges in a periodic sense.
\begin{cor}\label{cor:limit}Assume \Cref{hyp:coeff} and \Cref{hyp:At}. For each $p\in[0,1)$, we have
    \begin{align*}
    \lim_{n\to \infty}\mathbb{E}\begin{bmatrix}
        \dot{\overline{\gamma}}_2\big((n+p)/c_*\big)\\
        \dot{\overline{c}}_2\big((n+p)/c_*\big)
    \end{bmatrix}=&M_{\mathrm{per}}^{1,0}(p,c_*)+M^{0,2}(p,c_*)[\overline{R}^\infty(\cdot,\cdot,p),\overline{R}^\infty(\cdot,\cdot,p)]+M^{1,1}(p,c_*)[\overline{R}^\infty(\cdot,\cdot,p)].\end{align*}
    The convergence holds with the same exponential rates as \Cref{prop:Rinf} and \Cref{lem:correlations}, which is independent of $\sigma$.
    \end{cor}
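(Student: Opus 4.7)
The plan is to start from the identity in \Cref{prop:expectation} evaluated at $t_n=(n+p)/c_*$, so that $c_*t_n=n+p$, and then pass to the limit $n\to\infty$ in each of the three summands on the right-hand side separately. Throughout, the fact that $\overline{R}(\cdot,\cdot,t_n)$ and $\overline{R}^\infty(\cdot,\cdot,p)$ lie in $\ell^2_a(\Z^2;\R^2)$ (by \Cref{lem:contained}) together with the boundedness of $M^{0,2}$ and $M^{1,1}$ on this space (asserted ahead of the corollary) is what makes the passage to the limit legitimate.

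For the first summand, I would apply \Cref{lem:correlations} directly to write
\[
M^{1,0}(n+p,c_*)=M^{1,0}_{\mathrm{per}}(n+p,c_*)+M^{1,0}_{\mathrm{trans}}(n+p,c_*)=M^{1,0}_{\mathrm{per}}(p,c_*)+M^{1,0}_{\mathrm{trans}}(n+p,c_*),
\]
where the first equality uses periodicity with integer period and the second observes that $|M^{1,0}_{\mathrm{trans}}(n+p,c_*)|\leq C e^{-qn}$. This already produces the exponential rate claimed in the corollary for this piece.

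The main step is the treatment of the two terms involving $\overline{R}$. The key observation is a spatial translation equivariance: because $\phi_{\xi,c}(j)=\phi_{c}(j-\xi)$, a joint shift $\xi\mapsto\xi+n$, $j\mapsto j+n$, $m\mapsto m+n$ leaves the inner products defining $\Gamma^{k,n}$, $C^{k,n}$ and $B$ invariant. I would substitute the summation indices in~\eqref{eqn:MLdef}--\eqref{eqn:MQdef} to derive the identities
\begin{align*}
M^{0,2}(n+p,c_*)[\overline{R}(\cdot,\cdot,t_n),\overline{R}(\cdot,\cdot,t_n)]&=M^{0,2}(p,c_*)\big[\overline{R}(\cdot+n,\cdot+n,t_n),\overline{R}(\cdot+n,\cdot+n,t_n)\big],\\
M^{1,1}(n+p,c_*)[\overline{R}(\cdot,\cdot,t_n)]&=M^{1,1}(p,c_*)\big[\overline{R}(\cdot+n,\cdot+n,t_n)\big],
\end{align*}
where for the second identity one also relabels the $\delta_m$ as $\delta_{m-n}$ shifted by $n$ and then re-indexes the outer sum. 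With these identities in hand, \Cref{prop:Rinf} gives $\overline{R}(\cdot+n,\cdot+n,t_n)\to\overline{R}^\infty(\cdot,\cdot,p)$ in $\ell^2_a(\Z^2;\R^2)$ at rate $Ce^{-qn}$, and the boundedness of $M^{0,2}$ (bilinear) and $M^{1,1}$ (linear) upgrades this to convergence of the respective terms at the same rate. For the quadratic term this uses the standard polarization-type estimate
\[
\big|M^{0,2}(p,c_*)[\alpha_n,\alpha_n]-M^{0,2}(p,c_*)[\alpha,\alpha]\big|\leq \|M^{0,2}(p,c_*)\|\big(\|\alpha_n\|+\|\alpha\|\big)\|\alpha_n-\alpha\|,
\]
and the boundedness of $\|\overline{R}(\cdot+n,\cdot+n,t_n)\|_{\ell^2_a}$ uniformly in $n$, which is an immediate byproduct of \Cref{prop:Rinf} combined with \Cref{lem:contained}. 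Summing the contributions then yields the stated limit with the advertised uniform-in-$\sigma$ exponential rate.

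The main obstacle I anticipate is the spatial translation equivariance step: it is conceptually clear but requires keeping careful track of the shifted arguments inside the definitions~\eqref{eqn:bilinear1}, \eqref{eqn:bilinear2}, \eqref{eqn:MLdef} and \eqref{eqn:MQdef}, in particular for $M^{1,1}$ where the $\delta_m$ argument must be shifted consistently with the re-indexing of the outer sum. Everything else is a routine combination of the exponential decay of the transient part of $M^{1,0}$, the convergence of $\overline{R}$ to $\overline{R}^\infty$ in $\ell^2_a$, and the boundedness of the multilinear maps involved.
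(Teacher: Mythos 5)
Your proposal is correct and follows essentially the same route as the paper: it invokes the shift invariance of $M^{1,1}$ and $M^{0,2}$ to transfer the translation onto $\overline{R}$, splits $M^{1,0}(n+p,c_*)$ into its periodic and exponentially decaying transient parts via \Cref{lem:correlations}, and then concludes from the $\ell^2_a$-convergence of \Cref{prop:Rinf} together with the boundedness of the (bi)linear maps. The only difference is that you spell out the polarization-type estimate and the uniform boundedness of $\|\overline{R}(\cdot+n,\cdot+n,t_n)\|_{\ell^2_a}$, which the paper leaves implicit.
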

\begin{proof}
    Using the shift invariance
    \[M^{1,1}(c_*t,c_*)[\alpha]=M^{1,1}(c_*t-j,c_*)[\alpha(\cdot+j,\cdot+j)], \quad j\in\Z,\]
    and
    \[M^{0,2}(c_*t,c_*)[\alpha,\beta]=M^{0,2}(c_*t-j,c_*)[\alpha(\cdot+j,\cdot+j),\beta(\cdot+j,\cdot+j)], \quad j\in\Z,\]
    we rewrite~\eqref{eqn:shift} as
    \begin{align*}
    \mathbb{E}\begin{bmatrix}
        \dot{\overline{\gamma}}_2(t)\\
        \dot{\overline{c}}_2(t)
    \end{bmatrix}=&M^{1,0}(c_*t,c_*)+M^{0,2}\big(c_*t-\lfloor c_*t\rfloor,c_*\big)\big[\overline{R}(\cdot+\lfloor c_*t\rfloor,\cdot+\lfloor c_*t\rfloor,t),\overline{R}(\cdot+\lfloor c_*t\rfloor,\cdot+\lfloor c_*t\rfloor,t)\big]\\
    &+M^{1,1}\big(c_*t-\lfloor c_*t\rfloor,c_*\big)\big[\overline{R}(\cdot+\lfloor c_*t\rfloor,\cdot+\lfloor c_*t\rfloor,t)\big].\end{align*}
   Thus, for $p\in [0,1)$ and \[t_n=(n+p)/c_*, \quad n\in\N,\]
we have
 \begin{align*}
    \mathbb{E}\begin{bmatrix}
        \dot{\overline{\gamma}}_2(t_n)\\
        \dot{\overline{c}}_2(t_n) \end{bmatrix}=&
M^{1,0}(n+p,c_*)+M^{0,2}(p,c_*)\big[\overline{R}(\cdot+n,\cdot+n,t_n),\overline{R}(\cdot+n,\cdot+n,t_n)\big]\\&+M^{1,1}(p,c_*)\big[\overline{R}(\cdot+n,\cdot+n,t_n)\big].\end{align*}
    The result now follows from the convergence 
    \[M^{1,0}(n+p,c_*)=M_{\mathrm{per}}^{1,0}(p,c_*)+M_{\mathrm{trans}}^{1,0}(n+p,c_*)\to M_{\mathrm{per}}^{1,0}(p,c_*),\]
    through \Cref{lem:correlations}, and the convergence
    of \[\overline{R}(\cdot+n,\cdot+n,t_n)\to \overline{R}^\infty(\cdot,\cdot,p)\] 
    in $\ell^2_a(\Z^2;\R^2)$ (\Cref{prop:Rinf}). Indeed, the linear operator $ M^{1,1}(p,c_*)$ and bilinear map $ M^{0,2}(p,c_*)$ are bounded on $\ell^2_a(\Z^2;\R^2)$ (\Cref{app:continuity}).
   
\end{proof}
Since $\mathbb{E}[\dot{\overline{c}}_2(t)]$ converges to a $1/c_*$-periodic function, the quantity which effectively captures the decay-rate of the solitary wave amplitude is the second component of
\begin{align}\begin{bmatrix}
    \overline{\mathcal{Q}}_{\gamma}(c_*)\\
    \overline{\mathcal{Q}}_{c}(c_*)
\end{bmatrix}:=c_*^{-1}\int_0^{1}M_{\mathrm{per}}^{1,0}(p,c_*)+M^{0,2}(p,c_*)[\overline{R}^\infty(\cdot,\cdot,p),\overline{R}^\infty(\cdot,\cdot,p)]+M^{1,1}(p,c_*)[\overline{R}^\infty(\cdot,\cdot,p)]\ \d p.\label{eqn:rate}\end{align}
In particular, we write $\mathcal{Q}_{c}(c_*)$ for the version of $\overline{\mathcal{Q}}_{c}(c_*)$ that arises by taking $\mathcal{A}_{c_*t,c_*}=\mathcal{L}_{c_*t,c_*}$ in~\eqref{eqn:etaline}. Similarly, we write ${\mathcal{Q}}^{\mathrm{h}}_{c}(c_*)$ for the version of $\overline{\mathcal{Q}}_{c}(c_*)$ corresponding to the constant coefficient reduction $\mathcal{A}_{c_*t,c_*}=I$ in~\eqref{eqn:etaline}. Associated to these quantities, we can define the limiting ODEs
\begin{align}\dot{c}_{\mathrm{lim}}(\tau)=\mathcal{Q}_c({c}_{\mathrm{lim}}(\tau)),\quad \ \mathrm{with} \quad c_{\mathrm{lim}}(0)=c_*,\label{eqn:clim}\end{align}
and
\begin{align}\dot{c}^{\mathrm{h}}_{\mathrm{lim}}(\tau)=\mathcal{Q}^{\mathrm{h}}_c({c}^{\mathrm{h}}_{\mathrm{lim}}(\tau)),\quad \ \mathrm{with} \quad c^{\mathrm{h}}_{\mathrm{lim}}(0)=c_*,\label{eqn:limh}\end{align}
which capture the substantial deviations of $c$ from $c_*$ in the slow time variable $\tau=\sigma^2t$.

\Cref{fig:pdependence} shows that the dependence of the integrand in~\eqref{eqn:rate} on $p$ is very minimal.
\begin{figure}[h]
    \centering
    \includegraphics[width=0.5\linewidth]{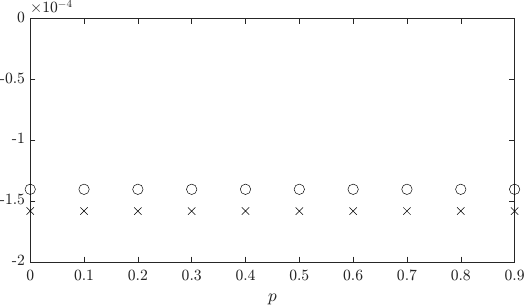}
    \caption{Asymptotic mean attenuation $\lim_{n\to\infty}\mathbb{E}[\dot{c}_2((n+p)/c_*)]$ (marked with $\circ$) and $\lim_{n\to\infty}\mathbb{E}[\dot{c}_2^{\mathrm{h}}((n+p)/c_*)]$ (marked with $\times$), as given by \Cref{cor:limit}, for $p\in[0,1)$.}
    \label{fig:pdependence}
\end{figure}
The right panel of \Cref{fig:rate_combined} displays the nonlinear dependence on the decay rates $\mathcal{Q}_{c}(c)$ and $\mathcal{Q}^{\mathrm{h}}_{c}(c)$ on the amplitude $c$. This sheds light on the nature of the amplitude decay through~\eqref{eqn:clim}. A polynomial fit on the datapoints in \Cref{fig:rate_combined} suggests a relation of the form $\mathcal{Q}_{c}(c)\sim -(c-1)^{2}$, corresponding to a polynomial decay $c(t)-1\sim \frac{1}{\sigma^2 t}$. \Cref{fig:sigmastangents+ODE} confirms that the limiting ODEs~\eqref{eqn:clim} and~\eqref{eqn:limh} track the amplitude decay remarkably well over long timescales.
\begin{figure}[H]
    \centering
    \begin{subfigure}{0.48\linewidth}
        \centering
        \includegraphics[width=\linewidth]{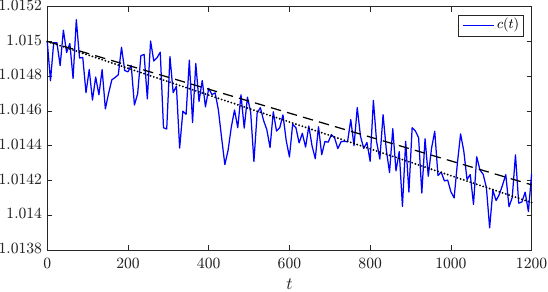}
    \end{subfigure}\hfill
    \begin{subfigure}{0.48\linewidth}
        \centering
        \includegraphics[width=\linewidth]{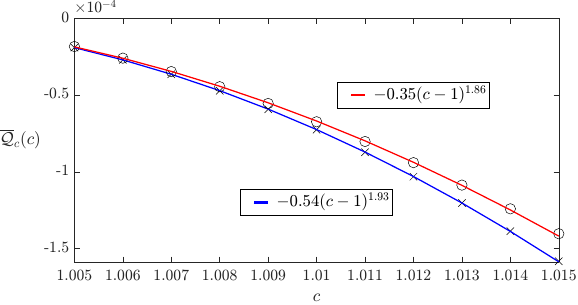}
    \end{subfigure}
    \caption{The left panel shows a realization of $c(t)$ and the asymptotic means $c_*+\mathcal{Q}_c(c_*)\sigma^2t$ (dashed) and $c_*+\mathcal{Q}^{\mathrm{h}}_c(c_*)\sigma^2t$ (dotted), via a numerical implementation of~\eqref{eqn:rate}. For this realization, $\sigma=0.07$, $c_*=1.015$ and $\kappa(i)\in \text{Unif}([-\sqrt{3},\sqrt{3}])$. The right panel displays numerical evaluations of $\mathcal{Q}_{c}(c)$  (marked with $\circ$) and $\mathcal{Q}^{\mathrm{h}}_{c}(c)$ (marked with $\times$), for various values of $c$; see \Cref{app:scheme}. The solid lines represent fitted power laws. } 
    \label{fig:rate_combined}
\end{figure}
\begin{figure}[h]
    \centering
    \begin{subfigure}{0.48\linewidth}
        \includegraphics[width=\linewidth]{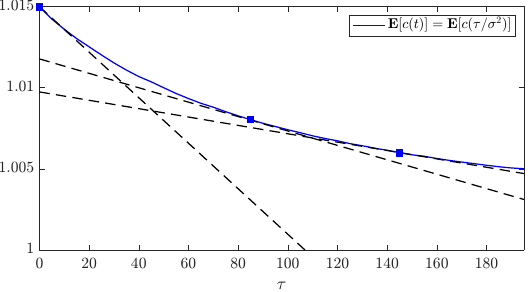}
        
    \end{subfigure}
    \hfill
    \begin{subfigure}{0.48\linewidth}
        \includegraphics[width=\linewidth]{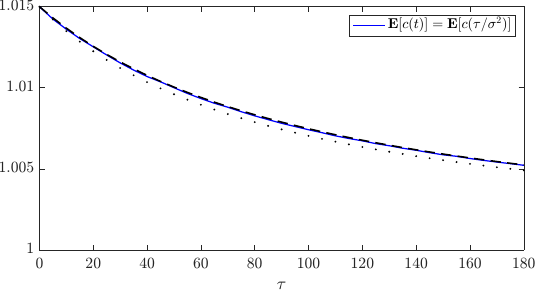}
        
    \end{subfigure}
    \caption{\Cref{fig:sigmas} revisited. The solid curves represent the sample mean $\mathbf{E}[c(t)]$, with $t=\tau/\sigma^2$, computed over 100 realizations, for $\sigma=0.1$, $c_*=1.015$, and $\kappa(i)$ drawn from a uniform distribution on $[-\sqrt{3},\sqrt{3}]$. The dashed lines in the left panel have slope $\mathcal{Q}_c(c)$ for $c \in \{1.015, 1.008, 1.006\}$, and are vertically shifted to intersect the $\mathbf{E}[c(t)]$ curve at the point where $\mathbf{E}[c(t)] = c$. The dashed and dotted curves in the right panel represent $c_{\mathrm{lim}}(\tau)$ and $c^{\mathrm{h}}_{\mathrm{lim}}(\tau)$, respectively, with $c_*=1.015$. Observe that the dashed curve coincides with the solid curve. }
    \label{fig:sigmastangents+ODE}
\end{figure}

\section{Outlook}\label{sec:outlook}
While the second-order expansion terms identified in \Cref{sec:reduced} reveal the $\mathcal{O}(\sigma^2)$ attenuation affecting the propagation of solitary waves in~\eqref{eqn:FPU}, our future goal is to rigorously justify its effect on the exact amplitude $c(t)$. This requires, presumably among other things, asserting that the solitary waves remain coherent as they slowly decay in amplitude. That is, $\eta(t)$ must stay small in some (weighted) norm, at least for times proportional to $\sigma^{-2}$. The main difficulty towards establishing this via the linear stability tool \Cref{prop:linearstability} is to control the nonlinearity $\eta_r^2$ present in~\eqref{eqn:etadot}. Classically, this is controlled by estimating the weighted norm
\[\|e^{a(\cdot-\xi(t))}\eta^2_r(t)\|_{\ell^2(\Z;\R)}\leq \|\eta_r(t)\|_{\ell^\infty(\Z;\R)}\|e^{a(\cdot-\xi(t))}\eta_r(t)\|_{\ell^2(\Z;\R)}\leq \|\eta_r(t)\|_{\ell^2(\Z;\R)}\|e^{a(\cdot-\xi(t))}\eta_r(t)\|_{\ell^2(\Z;\R)},\]
where the unweighted norm $\|\eta_r(t)\|_{\ell^2(\Z;\R)}$ can in turn be understood via energy methods. However, since the tail $\eta_r(t)$ expands behind the soliton, its squared $\ell^2(\Z;\R)$-norm grows linearly with time. Indeed, the amplitude attenuation goes hand-in-hand with the formation of a tail. We anticipate that future work will focus on controlling the supremum norm $\|\eta_r(t)\|_{\ell^\infty(\Z;\R)}$. The right panel of \Cref{fig:tail} suggests that the linear tail $\overline{\eta}(t)=\eta_1^{\mathrm{h}}(t)$ satisfies uniform bounds in space and time. The challenge will be to develop a pointwise analysis for the nonlinear tail $\eta(t)$.

We also briefly discussed the near-sonic behavior of the amplitude attenuation, i.e. the nature of the decay as $c\downarrow 1$. Future work can analyze the derivatives of $\mathcal{Q}_{c}(c)$ (\Cref{sec:attenuation}) at $c=1$ in more detail, to identify the lowest non-zero coefficient in an expansion
\[\mathcal{Q}_{c}(c)=k_1(c-1)+k_2(c-1)^2+\ldots \ .\]
This could confirm if the decay is of the form \[c(t)-1\sim\frac{1}{\sigma^2 t}.\]

\appendix

\section{Statistics of leading-order phase and amplitude}\label{app:statistics}
Below, we prove elementary statistical properties of the leading-order phase and amplitude contributions $(\gamma_{1}(t),c_{1}(t))$ in~\eqref{eqn:expgamma} and~\eqref{eqn:expc}.
\begin{proof}[Proof of \Cref{prop:variance}]
    From the explicit solutions~\eqref{eqn:c1explicit},~\eqref{eqn:Y1},~\eqref{eqn:Y2} and the assumption 
    \[\mathbb{E}[\kappa(i)]=0,\quad \mathbb{E}[\kappa^2(i)]=1,\quad i\in\Z\]
    we immediately obtain
    \[\mathbb{E}[c_1(t)]=c_*,\quad \mathbb{E}[\gamma_1(t)]=0,\quad t\geq 0.\]
    Turning to the covariance, we compute
   \begin{align*}\mathbb{E}[c_1(t)c_1(s)]=&\frac{\alpha_0^{-2}(c_*)}{4c_*^2}\sum_{j\in\Z}\sum_{j'\in\Z}\mathbb{E}[\kappa(j)\kappa(j')](r^2_{c_*}(j-c_*t)-r^2_{c_*}(j))(r^2_{c_*}(j'-c_*s)-r^2_{c_*}(j')).
   \end{align*}
   Since the $\kappa(j)$'s are independent, this reduces to    \begin{align*}\mathbb{E}[c_1(t)c_1(s)]=&\frac{\alpha_0^{-2}(c_*)}{4c_*^2}\sum_{j\in \Z}(r_{c_*}^2(j)-r^2_{c_*}(j-c_*t))(r_{c_*}^2(j)-r^2_{c_*}(j-c_*s))\end{align*}
   and the variance bound~\eqref{eqn:variancebound} then follows directly. Moving on to the variance of $\gamma_1(t)= \gamma_{1,\RomanI}(t)+ \gamma_{1,\RomanII}(t)$, we obtain in an analogous way from~\eqref{eqn:Y1} that
   \[\mathbb{E}[\gamma^2_{1,\RomanI}(t)]\leq \frac{\alpha^{-4}_0(c_*)\alpha^2_1(c_*)}{c^2_*}\|r_{c_*}\|_{\ell^4(\Z;\R)}^4.\]
   Furthermore, we obtain from~\eqref{eqn:Y2} that
     \begin{align*}\mathbb{E}[\gamma^2_{1,\RomanII}(t)]=& \frac{\alpha^{-2}_0(c_*)}{4}\sum_{j\in\Z}\Big(\int_0^t\partial_c r^2_{c_*}(j-c_*s)\d s\Big)^2\\
     =&\frac{\alpha^{-2}_0(c_*)}{4}\sum_{j\in\Z}\Big(\int_{j-c_*t}^j\partial_c r^2_{c_*}(y)\d y\Big)^2\\
     \leq&C\frac{\alpha^{-2}_0(c_*)}{4}\sum_{j\in\Z}\Big(\int_{j-c_*t}^je^{-\beta|y|}\d y\Big)^2.
     \end{align*}
     Here, we used that $\partial_cr_{c_*}^2$ is exponentially localized \cite[Proposition 5.5]{FPI}:
     \[\partial_cr_{c_*}^2(x)\leq Ce^{-\beta|x|}\]
     for some $C,\beta>0$. Note that
    \[
\int_{j - c_* t}^j e^{-\beta |y|} \, \d y =
\begin{cases}
\beta^{-1} \left( e^{\beta j} - e^{\beta(j - c_* t)} \right), & \text{if } j \leq 0, \\[1ex]
\beta^{-1} \left( 2 - e^{\beta(j - c_* t)} - e^{-\beta j} \right), & \text{if } 0 < j \leq c_* t, \\[1ex]
\beta^{-1} \left( e^{-\beta(j - c_* t)} - e^{-\beta j} \right), & \text{if } j > c_* t.
\end{cases}
\]
Hence,
     \begin{align*}\sum_{j\in\Z}\Big(\int_{j-c_*t}^je^{-\beta|y|}\d y\Big)^2\leq& 2\beta^{-2}\sum_{j\leq 0}e^{2\beta j}+4c_*t+2\beta^{-2}\sum_{j>ct}e^{-2\beta j}\\
     \leq& \frac{4\beta^{-2}}{1-e^{-2\beta}}+4c_*t,
     \end{align*}
     and
     \[\mathbb{E}[\gamma^2_{1,\RomanII}(t)]\leq C\frac{\alpha^{-2}_0(c_*)}{4}\Big(\frac{4\beta^{-2}}{1-e^{-2\beta}}+4c_*t\Big).\]
     The result now follows by estimating \[\mathbb{E}[\gamma_1^2(t)]=\mathbb{E}[\gamma^2_{1,\RomanI}(t)]+2\mathbb{E}[\gamma_{1,\RomanI}(t)\gamma_{1,\RomanII}(t)]+\mathbb{E}[\gamma^2_{1,\RomanII}(t)]\leq 2\mathbb{E}[\gamma^2_{1,\RomanI}(t)]+\mathbb{E}[\gamma^2_{1,\RomanII}(t)].\qedhere\]
\end{proof}

\section{Kernel representations}\label{app:greens}
Here, we derive the explicit kernel formula~\eqref{eqn:kernel} and prove \Cref{lem:greens} regarding the Green's function associated to general time/shift invariant linear operators. 
\begin{lemma}\label{lem:discwave}
Let $u_0,v_0\in\ell^2(\Z;\R)$. The linear system
\begin{align*}
   \begin{bmatrix}
        \dot{u}(t,j)\\
        \dot{v}(t,j)
    \end{bmatrix}=\mathcal{J}\begin{bmatrix}
        {u}(t,j)\\
        {v}(t,j)
    \end{bmatrix},\quad \text{with} \quad  \begin{bmatrix}
        {u}(0,j)\\
        {v}(0,j)
    \end{bmatrix}=\begin{bmatrix}
        {u}_0(j)\\
        {v}_0(j)
    \end{bmatrix}
\end{align*}
is solved by 
\begin{align*}
   \begin{bmatrix}
        u(t,j)\\
        v(t,j)
    \end{bmatrix}=\sum_{k\in\Z}\Phi(j-k,t)\begin{bmatrix}
        u_0(k)
\\
v_0(k)\end{bmatrix},\quad j\in\Z,\ t\in \R.
\end{align*}
\end{lemma}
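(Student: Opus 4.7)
\textbf{Proof plan for \Cref{lem:discwave}.} Since $\mathcal{J}$ is bounded and skew-adjoint on $\ell^2(\Z;\R^2)$, the abstract theory already guarantees a unique solution $(u(t),v(t)) = e^{\mathcal{J}t}(u_0,v_0)$ that depends continuously on the data. The content of the lemma is therefore the \emph{explicit kernel identity}, and my plan is to verify that the right-hand side has the claimed form by checking that the matrix-valued sequence $\Phi(j,t)$ satisfies the ODE and the correct initial condition, and then appeal to uniqueness.

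\textbf{Step 1: Verify the initial condition.} Using $J_0(0)=1$ and $J_n(0)=0$ for $n\neq 0$, I read off
\[
\Phi(j,0) = \begin{bmatrix} J_{2j}(0) & -J_{2j+1}(0) \\ -J_{2j-1}(0) & J_{2j}(0) \end{bmatrix} = \delta_{j,0}\,\mathrm{Id}_{2\times 2},
\]
so that the convolution reproduces $(u_0(j),v_0(j))^\top$ at $t=0$.

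\textbf{Step 2: Verify the evolution equation entrywise.} The key tool is the classical Bessel recurrence $\frac{d}{dx}J_n(x) = \tfrac{1}{2}(J_{n-1}(x)-J_{n+1}(x))$, which after the chain rule gives $\frac{d}{dt} J_n(2t) = J_{n-1}(2t) - J_{n+1}(2t)$. Writing $\Phi$ in terms of its columns $\Phi^{(1)}(\cdot,t),\Phi^{(2)}(\cdot,t)$, I want to show
\[
\partial_t \Phi(j,t) = \mathcal{J}\Phi(j,t) \coloneqq \begin{bmatrix} \Phi_{21}(j+1,t)-\Phi_{21}(j,t) & \Phi_{22}(j+1,t)-\Phi_{22}(j,t) \\ \Phi_{11}(j,t)-\Phi_{11}(j-1,t) & \Phi_{12}(j,t)-\Phi_{12}(j-1,t) \end{bmatrix}.
\]
Each of the four entries reduces to the Bessel recurrence applied with a shift of index $2j$, $2j\pm1$; for instance the $(1,1)$-entry becomes $\partial_t J_{2j}(2t) = J_{2j-1}(2t) - J_{2j+1}(2t) = -J_{2j+1}(2t) - (-J_{2j-1}(2t)) = \Phi_{21}(j+1,t) - \Phi_{21}(j,t)$, and the remaining three entries are analogous (the sign convention $J_{-n}=(-1)^n J_n$ is only needed cosmetically to match indices at $j=0$).

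\textbf{Step 3: Justify convergence of the convolution and pass the derivative inside the sum.} The uniform bound $|J_n(2t)|\le 1$ is too crude on its own; however, for $u_0,v_0\in\ell^2(\Z;\R)$ the convolution $w(j,t)=\sum_k \Phi(j-k,t)(u_0,v_0)^\top(k)$ is well-defined in $\ell^2$ because $\Phi(\cdot,t)$ represents the bounded (unitary) operator $e^{\mathcal{J}t}$; this is most transparent on the Fourier side, where $\widehat{\mathcal{J}}(\theta)$ is skew-Hermitian with spectrum $\pm 2i\sin(\theta/2)$, and the Jacobi--Anger expansion identifies the Fourier coefficients of $e^{\pm 2it\sin(\theta/2)}$ with shifted Bessel functions $J_n(2t)$. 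To differentiate the series termwise on a compact time interval, I control $\sum_k |\partial_t \Phi(j-k,t)|\,|(u_0,v_0)(k)|$ using Cauchy--Schwarz together with the $\ell^2$-in-$j$ estimate $\sum_j (J_{n}(2t))^2 \le 1$ (Bessel completeness), which gives a uniform bound.

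\textbf{Step 4: Conclude by uniqueness.} Having checked that $w(t)=\Phi(\cdot,t)*(u_0,v_0)$ is a $C^1([0,\infty);\ell^2)$-solution of the Cauchy problem with the stated initial data, uniqueness for the linear ODE driven by the bounded operator $\mathcal{J}$ forces it to coincide with $e^{\mathcal{J}t}(u_0,v_0)$.

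\textbf{Anticipated main obstacle.} The algebraic verification in Step~2 is routine once the Bessel recurrence is written down; the only subtlety is bookkeeping the minus signs and the distinction between even and odd indices $2j,\ 2j\pm 1$. The more technical point is Step~3 (convergence and termwise differentiation of an infinite series of Bessel functions), which I would dispatch by pairing the pointwise bound $|J_n(2t)|\le 1$ with either the Fourier representation or the Bessel $\ell^2$-orthogonality, rather than attempting sharp decay estimates in $n$.
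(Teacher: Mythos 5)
Your proposal is correct and follows essentially the same route as the paper: one verifies via the Bessel recurrence $2J_n'(x)=J_{n-1}(x)-J_{n+1}(x)$ that the columns of $\Phi$ are the fundamental solutions with initial data $(\delta_0,\,0)^\top$ and $(0,\,\delta_0)^\top$, and then superposes by convolution. Your Steps 3--4 (the $\ell^2$ convergence of the convolution, termwise differentiation, and uniqueness for the bounded generator $\mathcal{J}$) supply analytic details that the paper leaves implicit, but the core argument is identical.
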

\begin{proof}
We note that the Bessel functions satisfy $J_n(0)=0$ for all $n\in \Z\setminus\{0\}$ and meet the recurrence relation
\begin{align}2J_n'(x)=J_{n-1}(x)-J_{n+1}(x),\label{eqn:rec}\end{align}
see for instance \cite[\S 2.12]{bessel}. Consider initial conditions given by the basis vectors $(u_0,\ v_0)^\top=(\delta_0,\ 0)^\top$ and $(u_0,\ v_0)^\top=(0,\ \delta_0)^\top$. These give rise to the fundamental solutions \begin{align*}
    \phi(j,t)=&\begin{bmatrix}
        J_{2j}(2t)\\
        -J_{2j-1}(2t)
    \end{bmatrix}\quad \text{and} \quad 
    \psi(j,t)=\begin{bmatrix}
        -J_{2j+1}(2t)\\
        J_{2j}(2t)
    \end{bmatrix}
\end{align*}
for the discrete wave operator $\mathcal{J}$. Indeed, through the recurrence relation~\eqref{eqn:rec} one verifies that 
\begin{align*}\dot{\phi}(j,t)=\mathcal{J}\phi(j,t),\quad \text{with}\quad \phi(j,0)=\begin{bmatrix}\delta_0(j)\\0\end{bmatrix}\end{align*}
and
\begin{align*}\dot{\psi}(j,t)=\mathcal{J}\psi(j,t),\quad \text{with}\quad \psi(j,0)=\begin{bmatrix}0\\\delta_0(j)\end{bmatrix}.\end{align*}
For arbitrary initial conditions $(u_0,\ v_0)^\top\in \ell^2(\Z;\R^2)$, we obtain the solution~\eqref{eqn:linearwave} through a convolution with the fundamental solutions.
\end{proof}
We then move on to the proof of \Cref{lem:greens}. We refer to \cite[Theorem 4.2]{benzoni} for a constructive proof in the context of the discretized conservation laws. 
\begin{proof}[Proof of \Cref{lem:greens}]
    In terms of the shift-operation $S\eta=\eta(\cdot+1)$,~\eqref{eqn:invariance} reads
\[S^d\mathcal{A}_{c_*t,c_*}=\mathcal{A}_{c_*t-d,c_*}S^d,\quad d\in\Z.\]
We claim that this translates to the following property of the associated evolution family:
\begin{align}S^dU_{c_*}(t,s)=U_{c_*}(t-d/c_*,s-d/c_*)S^d,\quad d\in\Z.\label{eqn:evolutionshift}\end{align}
We prove the case $d=1$ by first differentiating the left-hand side:
\[\partial_t (SU_{c_*}(t,s))=S\mathcal{A}_{c_*t,c_*}U_{c_*}(t,s)=\mathcal{A}_{c_*t-1,c_*}SU_{c_*}(t,s).\]
On the other hand:
\[\partial_t(U_{c_*}(t-1/c_*,s-1/c_*)S)=\mathcal{A}_{c_*t-1,c_*}U_{c_*}(t-1/c_*,s-1/c_*)S.\]
Hence, both terms satisfy the (operator-valued) ODE
\[\dot{x}(t)=\mathcal{A}_{c_*t-1,c_*}x(t) \quad \text{with} \quad x(s)=S.\]
Next, we observe that for each $t\geq s$, the bounded operator $U_{c_*}(t,s)$ admits a kernel representation
\[U_{c_*}(t,s)\begin{bmatrix}
    \alpha \\ \beta
\end{bmatrix}(j)=\sum_k \Phi_{c_*}(t,s;j,k)\begin{bmatrix}
    \alpha(k) \\ \beta(k)
\end{bmatrix}.\]
Identity~\eqref{eqn:evolutionshift} has immediate consequences for the structure of the kernel $\Phi_{c_*}$:
\begin{align*}
    \sum_{k}\Phi_{c_*}(t,s;j+d,k)\begin{bmatrix}
        \alpha(k)
\\
\beta(k)\end{bmatrix}=&\Big(U_{c_*}(t-d/c_*,s-d/c_*)\begin{bmatrix}
        \alpha(\cdot+d)
\\
\beta(\cdot+d)\end{bmatrix}\Big)(j)\\
=&\sum_{k}\Phi_{c_*}(t-d/c_*,s-d/c_*;j,k)\begin{bmatrix}
        \alpha(k+d)
\\
\beta(k+d)\end{bmatrix}\\
=&\sum_{k'}\Phi_{c_*}(t-d/c_*,s-d/c_*;j,k'-d)\begin{bmatrix}
        \alpha(k')
\\
\beta(k')\end{bmatrix}
\end{align*}
and consequently 
\begin{align}\Phi_{c_*}(t,s;j,k)=\Phi_{c_*}(t-d/c_*,s-d/c_*;j-d,k-d), \quad d\in\Z.\label{eqn:greenshift}\end{align}
Through a basis transformation of the arguments $(t,s;j,k)$, we obtain a kernel $\tilde{\mathcal{G}}_{c_*}$ which satisfies
\[\Phi_{c_*}(t,s;j,k)=\tilde{\mathcal{G}}_{c_*}(t-s,j-c_*t,k-c_*s,k).\]
In view of~\eqref{eqn:greenshift}, $\tilde{\mathcal{G}}$ is constant in its last variable. It follows that the kernel only depends on three effective variables:
\[\Phi_{c_*}(t,s;j,k)=\mathcal{G}_{c_*}(t-s,j-c_*t,k-c_*s). \qedhere\]
\end{proof}

\section{Continuity of (bi)linear maps}\label{app:continuity}
Here, we prove that the linear map $M^{1,1}(\xi,c):\ell^2_a(\Z^2;\R^2)\to\R^2$ defined via
\begin{align*}M^{1,1}(\xi,c)[\alpha]=\sum_{m\in\Z}\begin{bmatrix}
        \Gamma^{1,1}(\xi,c)\\
        C^{1,1}(\xi,c)
    \end{bmatrix}[\delta_m,\alpha(\cdot,m)],
\end{align*}
and the bilinear map $M^{0,2}(\xi,c):\ell^2_a(\Z^2;\R^2)\times \ell^2_a(\Z^2;\R^2)\to\R^2$ defined by
\begin{align*}M^{0,2}(\xi,c)[\alpha,\beta]=\sum_{m\in\Z}\begin{bmatrix}
        \Gamma^{0,2}(\xi,c)\\
        C^{0,2}(\xi,c)
    \end{bmatrix}[\alpha_{r}(\cdot,m),\beta_{r}(\cdot,m)],
\end{align*}
used in \Cref{sec:attenuation}, are continuous on exponentially weighted spaces
\[
\ell^2_a(\mathbb{Z}^2;\mathbb{R}^2)
:= \left\{ f:\mathbb{Z}^2\to\mathbb{R}^2 \;\middle|\; \|f\|_{\ell^2_a(\mathbb{Z}^2;\mathbb{R}^2)}<\infty \right\},\]
with norm
\[
\|f\|_{\ell^2_a(\mathbb{Z}^2;\mathbb{R}^2)}^2
= \sum_{m\in\mathbb{Z}} \|e^{a\cdot} f(\cdot,m)\|_{\ell^2(\Z;\R^2)}^2 .
\]
We recall that the weight $a>0$ was introduced in \Cref{prop:linearstability}, and that $\Gamma^{1,1}, \Gamma^{0,2}, C^{1,1}$ and $C^{0,2}$ are expansion terms of the modulation system, as introduced in \Cref{sec:modulation}.
\begin{lemma}\label{lem:continuityML}
   For each $p\in[0,1)$ and $c\in(c_-,c_+)$, the linear operator $M^{1,1}(p,c)$ is bounded from $\ell^2_a(\mathbb{Z}^2;\mathbb{R}^2)$ to $\R^2$.
\end{lemma}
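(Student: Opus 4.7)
The plan is to unpack $M^{1,1}(p,c)[\alpha]$ using the explicit formula~\eqref{eqn:bilinear2} for $\Gamma^{1,1}$ and $C^{1,1}$, which splits the sum over $m\in\Z$ into two branches: a product branch, carrying $A^{-1}(c)B(p,c)[\alpha(\cdot,m)]$ multiplied by the scalar pair $\bigl(\Gamma^{1,0}(p,c),C^{1,0}(p,c)\bigr)^\top[\delta_m]$, and a bilinear branch, carrying $\bigl(\Gamma^{0,2}(p,c),C^{0,2}(p,c)\bigr)^\top[\delta_m,\alpha_r(\cdot,m)]$. Each branch will be estimated separately in terms of $\|\alpha\|_{\ell^2_a(\Z^2;\R^2)}$ via Cauchy--Schwarz in $m$, driven by the exponential localization of $\phi_c$ and its derivatives~\cite[Proposition 5.5]{FPI}.

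For the product branch I would first invoke~\eqref{eqn:Ls} to obtain the pointwise bound $\bigl|(\Gamma^{1,0}(p,c),C^{1,0}(p,c))^\top[\delta_m]\bigr|\leq Ce^{-\beta|m-p|}$ for some rate $\beta>a$, inherited from the decay of $\partial_\xi r_c^2$ and $\partial_c r_c^2$. Each entry of $B(p,c)[\alpha(\cdot,m)]$ is a symplectic pairing of the form $\Omega(f,\alpha(\cdot,m))=\langle \mathcal{J}^{-1}f,\alpha(\cdot,m)\rangle$ with $f$ a (mixed) derivative of $\phi_{p,c}$; since such $f$ are exponentially localized with zero total integral, $\mathcal{J}^{-1}f$ retains exponential decay, and a weighted Cauchy--Schwarz inequality yields $|B(p,c)[\alpha(\cdot,m)]|\leq C\|e^{a\cdot}\alpha(\cdot,m)\|_{\ell^2(\Z;\R^2)}$ uniformly in $m$. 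Summing over $m$ with Cauchy--Schwarz then gives the desired bound, as $\sum_m e^{-2\beta|m-p|}<\infty$.

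For the bilinear branch, the delta in the first slot collapses the inner products in~\eqref{eqn:bilinear1} to pointwise evaluations of the form $\partial_\xi r_c(m-p)\,\alpha_r(m,m)$ and $\partial_c r_c(m-p)\,\alpha_r(m,m)$. The discrete embedding $\ell^2\hookrightarrow\ell^\infty$ provides $|\alpha_r(m,m)|\leq e^{-am}\|e^{a\cdot}\alpha(\cdot,m)\|_{\ell^2(\Z;\R^2)}$, so summation in $m$ combined with Cauchy--Schwarz produces an estimate of the form
\[
C\Bigl(\sum_{m\in\Z} e^{-2\beta|m-p|}e^{-2am}\Bigr)^{1/2}\|\alpha\|_{\ell^2_a(\Z^2;\R^2)}.
\]
The doubly weighted sum is finite by a routine case split: on $m\leq p$ the integrand is proportional to $e^{2(\beta-a)m}$, summable since $\beta>a$, whereas on $m\geq p$ it is proportional to $e^{-2(\beta+a)m}$, also summable.

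The main subtlety is precisely this bilinear branch, where the delta forces us to control the diagonal value $\alpha_r(m,m)$ rather than a full smoothed pairing against $\alpha(\cdot,m)$. The $\ell^2$--$\ell^\infty$ embedding absorbs the single coordinate only at the cost of an $e^{-am}$ prefactor that blows up as $m\to-\infty$, and convergence of the resulting sum hinges on the strict inequality $\beta>a$ between the wave-profile decay rate and the stability weight of \Cref{prop:linearstability}; the product branch, by contrast, sees $\alpha(\cdot,m)$ only through smooth symplectic pairings and is comparatively harmless.
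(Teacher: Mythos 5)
Your proposal is correct and follows essentially the same route as the paper: the same split of $M^{1,1}$ via~\eqref{eqn:bilinear2} into the $A^{-1}B\cdot(\Gamma^{1,0},C^{1,0})[\delta_m]$ branch and the $(\Gamma^{0,2},C^{0,2})[\delta_m,\alpha_r(\cdot,m)]$ branch, the same diagonal bound $e^{am}|\alpha_r(m,m)|\le\|e^{a\cdot}\alpha_r(\cdot,m)\|_{\ell^2(\Z;\R)}$, and the same Cauchy--Schwarz summation in $m$ using the exponential localization of the profiles with rate $\beta>a$. The only cosmetic difference is your appeal to a ``zero total integral'' to get decay of $\mathcal{J}^{-1}f$; this is unnecessary (and not obviously true for $\partial^2_{cc}\phi_{p,c}$), since the paper only needs $\|e^{-a\cdot}\mathcal{J}^{-1}\partial^2\phi_c(\cdot-p)\|_{\ell^2(\Z;\R^2)}<\infty$, which the weight $e^{-a\cdot}$ already guarantees at $+\infty$.
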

\begin{proof}
Let $p\in[0,1), c\in(c_-,c_+)$, and $\alpha=(\alpha_r,\ \alpha_p)^\top\in \ell^2_a(\mathbb{Z}^2;\mathbb{R}^2)$. 
        From~\eqref{eqn:MLdef}, we get
 \begin{align*}
        \Big|M^{1,1}(p,c)[\alpha]\Big|
        \leq& 
        \sum_{m\in\Z}\Big|\begin{bmatrix}
        \Gamma^{1,1}(\xi,c)\\
        C^{1,1}(\xi,c)
    \end{bmatrix}[\delta_m,\alpha(\cdot,m)]\Big|
        \leq S_1+S_2,
    \end{align*}
    with
    \[S_1=\sum_{m\in\Z}\Big|\begin{bmatrix}
         \Gamma^{0,2}(p,c)\\
         C^{0,2}(p,c)
     \end{bmatrix}[\delta_m,\alpha_r(\cdot,m)]\Big|,\]
     and
   \[S_2=\sum_{m\in\Z}\Big|A^{-1}(c)B(p,c,\alpha(\cdot,m))\begin{bmatrix}
         \Gamma^{1,0}(p,c)\\
         C^{1,0}(p,c)
     \end{bmatrix}[\delta_m]\Big|.\]  
We estimate
    \begin{align*}
\Big|\begin{bmatrix}
         \Gamma^{0,2}(p,c)\\
         C^{0,2}(p,c)
     \end{bmatrix}[\delta_m,\alpha_r(\cdot,m)]\Big|\leq& C\Big(| \partial_\xi r_c(m-p)\alpha_r(m,m)|+| \partial_c r_c(m-p)\alpha_r(m,m)|\Big),
    \end{align*}
so that
\begin{align*}
    S_1\leq& C\sum_{m\in\Z} \big(|\partial_\xi r_c(m-p)+|\partial_c r_c(m-p)|\big) |\alpha_r(m,m)|\\
    \leq& C\Big(\sum_{m\in\Z}e^{2am}|\alpha_r(m,m)|^2\Big)^{1/2} \Big(\sum_{m\in\Z} e^{-2am}\big(|\partial_\xi r_c(m-p)+|\partial_c r_c(m-p)|\big)^2\Big)^{1/2}\\
    \leq& \tilde{C}\Big(\sum_{m\in\Z}\|e^{a\cdot}\alpha_r(\cdot,m)\|_{\ell^2(\Z;\R)}^2\Big)^{1/2}\leq \tilde{C}\|\alpha\|_{\ell^2_a(\mathbb{Z}^2;\mathbb{R}^2)}.
\end{align*}
We proceed with 
\begin{align*}
    S_2\leq& \Big(\sum_{m\in\Z}\Big|A^{-1}(c)B(p,c,\alpha(\cdot,m))\Big|^2\Big)^{1/2}  \Big(\sum_{m\in\Z}\Big|\begin{bmatrix}
         \Gamma^{1,0}(p,c)\\
         C^{1,0}(p,c)
     \end{bmatrix}[\delta_m]\Big|^2\Big)^{1/2},
\end{align*}
where
\begin{align*}
    \sum_{m\in\Z}\Big|\begin{bmatrix}
         \Gamma^{1,0}(p,c)\\
         C^{1,0}(p,c)
     \end{bmatrix}[\delta_m]\Big|^2\leq C \sum_{m\in\Z}\Big(|\partial_\xi r_c^2(m-p)|+|\partial_c r_c^2(m-p)|\Big)^2<\infty.
\end{align*}
The result then follows by estimating
\begin{align*}
    \Big|A^{-1}(c)B(p,c,\alpha(\cdot,m))\Big|\leq CW\|e^{a\cdot}(\alpha(\cdot,m))\|_{\ell^2(\Z;\R^2)},
\end{align*}
with 
\begin{align*}
    W:=\|e^{-a\cdot} \mathcal{J}^{-1}\partial_{\xi\xi}^2\phi_c(\cdot-p)\|_{\ell^2(\Z;\R^2)}+2\|e^{-a\cdot} \mathcal{J}^{-1}\partial_{\xi c}^2\phi_c(\cdot-p)\|_{\ell^2(\Z;\R^2)}+\|e^{-a\cdot} \mathcal{J}^{-1}\partial_{cc}^2\phi_c(\cdot-p)\|_{\ell^2(\Z;\R^2)}.
\end{align*}
In particular
\[
    \sum_{m\in\Z}\Big|A^{-1}(c)B(p,c,\alpha(\cdot,m))\Big|^2\leq C^2W^2\sum_{m\in\Z}\|e^{a\cdot}(\alpha(\cdot,m))\|_{\ell^2(\Z;\R^2)}^2=C^2W^2\|\alpha\|^2_{\ell^2_a(\mathbb{Z}^2;\mathbb{R}^2)} .
\qedhere \]
\end{proof}
We proceed with the bilinear map $M^{0,2}$.
\begin{lemma}\label{lem:continuityMQ}
   For each $p\in[0,1)$ and $c\in(c_-,c_+)$, the bilinear form $M^{0,2}(p,c)$ is bounded from $\ell^2_a(\mathbb{Z}^2;\mathbb{R}^2)\times \ell^2_a(\mathbb{Z}^2;\mathbb{R}^2)$ to $\R^2$. In particular, there exists a constant $C>0$ such that 
   \[|M^{0,2}(p,c)[\alpha,\beta]|\leq C\|\alpha\|_{\ell^2_a(\mathbb{Z}^2;\mathbb{R}^2)}\|\beta\|_{\ell^2_a(\mathbb{Z}^2;\mathbb{R}^2)},\]
   for all $\alpha,\beta\in \ell^2_a(\Z^2;\R^2).$
\end{lemma}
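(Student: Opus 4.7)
My plan is to follow the two-step Cauchy–Schwarz strategy used in \Cref{lem:continuityML}, adapted to the fact that here both $\alpha$ and $\beta$ live in the weighted space $\ell^2_a(\Z^2;\R^2)$. The idea is to unfold the definition~\eqref{eqn:bilinear1} of $\Gamma^{0,2}$ and $C^{0,2}$, apply a weighted Hölder inequality in the $j$-variable for each fixed $m$, and then close with a Cauchy–Schwarz over $m$.

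More concretely, combining~\eqref{eqn:MQdef} with~\eqref{eqn:bilinear1} yields a bound of the form
\begin{align*}
|M^{0,2}(p,c)[\alpha,\beta]|
\leq \|A^{-1}(c)\|\sum_{m\in\Z}\sum_{j\in\Z}\big(|\partial_\xi r_c(j-p)|+|\partial_c r_c(j-p)|\big)\, |\alpha_r(j,m)||\beta_r(j,m)|.
\end{align*}
For fixed $m$, I would split the weight as $\partial_\xi r_c(j-p)\alpha_r(j,m)\beta_r(j,m) = \bigl(e^{-2aj}\partial_\xi r_c(j-p)\bigr)\bigl(e^{aj}\alpha_r(j,m)\bigr)\bigl(e^{aj}\beta_r(j,m)\bigr)$ and apply the $\ell^\infty\times\ell^2\times\ell^2$ Hölder bound in $j$ to obtain
\begin{align*}
\sum_{j\in\Z} |\partial_\xi r_c(j-p)||\alpha_r(j,m)||\beta_r(j,m)|
\leq \bigl\|e^{-2a\cdot}\partial_\xi r_c(\cdot-p)\bigr\|_{\ell^\infty(\Z;\R)}\,
\|e^{a\cdot}\alpha_r(\cdot,m)\|_{\ell^2(\Z;\R)}\,\|e^{a\cdot}\beta_r(\cdot,m)\|_{\ell^2(\Z;\R)},
\end{align*}
together with an identical estimate for the $\partial_c r_c$-contribution.

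A final Cauchy–Schwarz in $m$ then produces
\begin{align*}
\sum_{m\in\Z}\|e^{a\cdot}\alpha_r(\cdot,m)\|_{\ell^2(\Z;\R)}\|e^{a\cdot}\beta_r(\cdot,m)\|_{\ell^2(\Z;\R)}
\leq \|\alpha\|_{\ell^2_a(\Z^2;\R^2)}\|\beta\|_{\ell^2_a(\Z^2;\R^2)},
\end{align*}
which, combined with the previous estimates, delivers the desired bound with a constant of the form $C = \|A^{-1}(c)\|\bigl(\|e^{-2a\cdot}\partial_\xi r_c(\cdot-p)\|_{\ell^\infty}+\|e^{-2a\cdot}\partial_c r_c(\cdot-p)\|_{\ell^\infty}\bigr)$. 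The only real obstacle is verifying finiteness of these $\ell^\infty$-prefactors: this requires the exponential localization rate $\beta>0$ of the wave-profile derivatives from \cite[Proposition 5.5]{FPI} to satisfy $\beta>2a$, a slight strengthening of the condition $\beta>a$ invoked in~\eqref{eqn:asymptotics}. Since the weight $a$ in \Cref{prop:linearstability} may be shrunk freely, this assumption can be arranged without affecting any of the preceding results.
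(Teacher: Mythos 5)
Your proof is correct and follows essentially the same route as the paper's: unfold~\eqref{eqn:MQdef}, bound each $m$-summand by extracting the prefactor $\sup_{j\in\Z} e^{-2aj}\big(|\partial_\xi r_c(j-p)|+|\partial_c r_c(j-p)|\big)$ and applying Cauchy--Schwarz to the weighted pairing in $j$, then close with a second Cauchy--Schwarz over $m$. Your explicit observation that finiteness of this $\ell^\infty$-prefactor requires the exponential localization rate of the profile derivatives to exceed $2a$ rather than just $a$ is a hypothesis the paper's own proof uses implicitly, and your remark that $a$ can be shrunk to arrange this is the right way to discharge it.
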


\begin{proof}
Let $p\in[0,1)$ and $ c\in(c_-,c_+)$. Let furthermore 
\[\alpha=(\alpha_r,\ \alpha_p)^\top,\beta=(\beta_r,\ \beta_p)^\top \in \ell^2_a(\mathbb{Z}^2;\mathbb{R}^2).\] 
From~\eqref{eqn:MQdef}, we estimate
    \[|M^{0,2}(p,c)[\alpha,\beta]|\leq\sum_{m\in\Z} \Big(\Big|\begin{bmatrix}
        \Gamma^{0,2}(p,c)\\
        C^{0,2}(p,c)
    \end{bmatrix}[\alpha_r(\cdot,m),
        \beta_r(\cdot,m)]\Big|\Big).\]
 For all $m\in\Z$, we have
    \begin{align*}
        \Big|\begin{bmatrix}
        \Gamma^{0,2}(p,c)\\
        C^{0,2}(p,c)
    \end{bmatrix}[\alpha_r(\cdot,m),\beta_r(\cdot,m)]\Big|\leq& C|\langle \partial_\xi r_c(\cdot-p),\alpha_r(\cdot,m)\beta_r(\cdot,m)\rangle_{\ell^2(\Z;\R)}|\\
        &+C|\langle \partial_c r_c(\cdot-p),\alpha_r(\cdot,m)\beta_r(\cdot,m)\rangle_{\ell^2(\Z;\R)}|\\
        \leq & C\sup_{j\in\Z}e^{-2aj}\big(| \partial_\xi r_c(j-p)|+|\partial_c r_c(j-p)|\big)\\
        &\times|\langle e^{a\cdot}\alpha_r(\cdot,m),e^{a\cdot}\beta_r(\cdot,m)\rangle_{\ell^2(\Z;\R)}|\\
        \leq & \tilde{C} \| e^{a\cdot}\alpha_r(\cdot,m)\|_{\ell^2(\Z;\R)} \|e^{a\cdot}\beta_r(\cdot,m)\|_{\ell^2(\Z;\R)}, 
    \end{align*}
    so that
 \begin{align*}
|M^{0,2}(p,c)[\alpha,\beta]|
\leq & \tilde{C}\sum_{m\in\Z}\| e^{a\cdot}\alpha_r(\cdot,m)\|_{\ell^2(\Z;\R)} \|e^{a\cdot}\beta_r(\cdot,m)\|_{\ell^2(\Z;\R)}\\
\leq & \tilde{C}\Big(\sum_{m\in\Z}\| e^{a\cdot}\alpha_r(\cdot,m)\|_{\ell^2(\Z;\R)}^2\Big)^{1/2}\Big(\sum_{m\in\Z}\|e^{a\cdot}\beta_r(\cdot,m)\|^2_{\ell^2(\Z;\R)}\Big)^{1/2}\\
\leq & \tilde{C}\|\alpha\|_{\ell^2_a(\mathbb{Z}^2;\mathbb{R}^2)}\|\beta\|_{\ell^2_a(\mathbb{Z}^2;\mathbb{R}^2)}.\qedhere
 \end{align*}   
\end{proof}

\section{Correlations}\label{app:correlations}
Here, we prove \Cref{lem:correlations} regarding the correlations $M^{1,0}_{\RomanI},\ldots,M^{1,0}_{\RomanIV}$ introduced in~\eqref{eqn:corM1}--\eqref{eqn:corM4}. By \Cref{hyp:coeff}, we have
\[\mathbb{E}[\kappa(j)]=0 \mand \mathbb{E}[\kappa(i)\kappa(j)]=\delta_{ij},\quad i,j\in\Z.\]
This allows us to explicitly compute
   \begin{align*} 
    M^{1,0}_{1}(\xi,c)
    =&\frac{\alpha^{-2}_0(c)\alpha_1(c)}{4c}\sum_{j\in\Z}[r^2_{c}(j-\xi)-r^2_{c}(j)]A^{-1}(c)\begin{bmatrix}
        (\partial^2_{\xi\xi} r^2_{c})(j-\xi)\\
        (\partial^2_{\xi c}r^2_{c})(j-\xi)
    \end{bmatrix}, \\
    M^{1,0}_{2}(\xi,c)
    =& -\frac{\alpha^{-1}_0(c)}{4c}\sum_{j\in\Z}\int_0^{\xi}\partial_c r^2_{c}(j-s)\d s A^{-1}(c)\begin{bmatrix}
        (\partial^2_{\xi\xi} r^2_{c})(j-\xi)\\
        (\partial^2_{\xi c}r^2_{c})(j-\xi)
    \end{bmatrix},\\
   M^{1,0}_{\RomanIII}(\xi,c)=&- \frac{\alpha_0^{-1}(c)}{4c^2}\sum_{j\in\Z}\Big(\int_0^{\xi} r^2_{c}(j-s) \ \d s -\xi r^2_{c}(j)\Big)A^{-1}(c)\begin{bmatrix}
        (\partial^2_{\xi\xi} r^2_{c})(j-\xi)\\
        (\partial^2_{\xi c}r^2_{c})(j-\xi)
    \end{bmatrix},
    \end{align*}
    together with
    \begin{align*}
        M^{1,0}_{\RomanIV}(\xi,c)=&\frac{\alpha_0^{-1}(c)}{4c}\sum_{j\in\Z}[r^2_{c}(j-\xi)-r^2_{c}(j)]\Big(\partial_cA^{-1}(c)\begin{bmatrix}
        (\partial_\xi r^2_{c})(j-\xi)\\
        (\partial_cr^2_{c})(j-\xi)
    \end{bmatrix}+A^{-1}(c)\begin{bmatrix}
        (\partial^2_{c\xi} r^2_{c})(j-\xi)\\
        (\partial^2_{cc}r^2_{c})(j-\xi)
    \end{bmatrix}\Big).
    \end{align*}
\begin{proof}[Proof of \Cref{lem:correlations}]

    We prove that \[M_{\RomanIII}^{1,0}(\xi,c)=M^{1,0}_{\RomanIII,\mathrm{per}}(\xi,c)+M^{1,0}_{\RomanIII,\mathrm{trans}}(\xi,c),\]
    with $M^{1,0}_{\RomanIII,\mathrm{per}}(\xi,c)$ 1-periodic in $\xi$
    and $M^{1,0}_{\RomanIII,\mathrm{trans}}(\xi,c)$ exponentially decaying in $|\xi|$. The proof for the remaining mappings is then analogous.
    Decomposing \[\R\ni\xi=n+p, \quad n\in\Z,\ p\in[0,1),\] we observe that
    \begin{align*}
        M^{1,0}_{\RomanIII}(\xi,c)=&- \frac{\alpha_0^{-1}(c)}{4c^2}\sum_{j\in\Z}\Big(\int_0^{n+p} r^2_{c}(j+n+p-s-p) \ \d s -(j+n)r^2_{c}(j+n)\Big)A^{-1}(c)\begin{bmatrix}
        (\partial^2_{\xi\xi} r^2_{c})(j-p)\\
        (\partial^2_{\xi c}r^2_{c})(j-p)
    \end{bmatrix}\\
    =&- \frac{\alpha_0^{-1}(c)}{4c^2}\sum_{j\in\Z}\Big(\int_0^{n+p} r^2_{c}(j-p+\tau) \ \d \tau -(j+n)r^2_{c}(j+n)\Big)A^{-1}(c)\begin{bmatrix}
        (\partial^2_{\xi\xi} r^2_{c})(j-p)\\
        (\partial^2_{\xi c}r^2_{c})(j-p)
    \end{bmatrix}.
    \end{align*}
    Hence, we identify
    \begin{align*}M^{1,0}_{\RomanIII,\mathrm{per}}(\xi,c)
    =& - \frac{\alpha_0^{-1}(c)}{4c^2}\sum_{j\in\Z}\int_0^{\infty} r^2_{c}(j-p+\tau) \ \d \tau A^{-1}(c)\begin{bmatrix}
        (\partial^2_{\xi\xi} r^2_{c})(j-p)\\
        (\partial^2_{\xi c}r^2_{c})(j-p)
    \end{bmatrix},\end{align*}
    and
    \begin{align*}
     M^{1,0}_{\RomanIII,\mathrm{trans}}(\xi,c)
    =&  \frac{\alpha_0^{-1}(c)}{4c^2}\sum_{j\in\Z}\Big(\int_{n+p}^\infty r^2_{c}(j-p+\tau) \ \d \tau +(j+n)r^2_{c}(j+n)\Big)A^{-1}(c)\begin{bmatrix}
        (\partial^2_{\xi\xi} r^2_{c})(j-p)\\
        (\partial^2_{\xi c}r^2_{c})(j-p)
    \end{bmatrix},
\end{align*}
which sum to $M_{\RomanIII}^{1,0}(\xi,c)$. The component $M^{1,0}_{\RomanIII,\mathrm{per}}(\xi,c)$ is clearly periodic with period 1. It remains to be shown that $ M^{1,0}_{\RomanIII,\mathrm{trans}}(\xi,c)$ is exponentially decaying in $|\xi|=|n+p|$. The exponential decay of the wave profiles and their derivatives \cite[Proposition 5.5]{FPI} provides
\[ \sum_{j\in\Z} (j+n)r^2_{c}(j+n)\left|A^{-1}(c)\begin{bmatrix}
        (\partial^2_{\xi\xi} r^2_{c})(j-p)\\
        (\partial^2_{\xi c}r^2_{c})(j-p)
    \end{bmatrix}\right|\leq C \sum_{j\in\Z}e^{-\beta|j+n|}e^{-\beta|j-p|}\leq\tilde{C}e^{-\tilde{\beta}|n|},\]
for some $C,\tilde{C},\beta>0$. For the integral component, we similarly find
\begin{align*}
    \sum_{j\in\Z}\int_0^{n+p} r^2_{c}(j-p+\tau) \ \d \tau\left|A^{-1}(c)\begin{bmatrix}
        (\partial^2_{\xi\xi} r^2_{c})(j-p)\\
        (\partial^2_{\xi c}r^2_{c})(j-p)
    \end{bmatrix}\right| \leq  \sum_{j\in\Z}\int_0^{n+p} e^{-\beta|j-p+\tau|}e^{-\beta|j-p|} \ \d \tau \leq \tilde{C}e^{-\tilde{\beta}|n|},
\end{align*}  
upon increasing $\tilde{C}$ if necessary.
\end{proof}

\section{Numerical Schemes}\label{app:scheme}
Our direct simulations of the FPUT system~\eqref{eqn:FPU} were carried out using a fourth-order Runge--Kutta scheme. We extracted the modulation parameters from the numerical solution by enforcing the orthogonality conditions~\eqref{eqn:ortho} through a nonlinear solver. Specifically, given the numerical solution $u(t) = (r(t),\, p(t))^\top$ to~\eqref{eqn:FPU} at time $t$, we determined parameters $c_{\mathrm{fit}}(t)$ and $\xi_{\mathrm{fit}}(t)$ from the equations
\[
\Omega\Big(\partial_\xi \phi_{c_{\mathrm{fit}}(t)}(\cdot-\xi_{\mathrm{fit}}(t)),\, u-\phi_{c_{\mathrm{fit}}(t)}(\cdot-\xi_{\mathrm{fit}}(t))\Big)=0,
\]
and
\[
\Omega\Big(\partial_c \phi_{c_{\mathrm{fit}}(t)}(\cdot-\xi_{\mathrm{fit}}(t)),\, u-\phi_{c_{\mathrm{fit}}(t)}(\cdot-\xi_{\mathrm{fit}}(t))\Big)=0.
\]
We computed the wave profiles and their derivatives by solving the traveling wave equation~\eqref{eqn:TWE} following the approach outlined in \cite[Section 3.1]{nanopteron4}. The phase parameter $\gamma_{\mathrm{fit}}$ was then obtained via the numerical integration
\[
\gamma_{\mathrm{fit}}(t) \,=\, \xi_{\mathrm{fit}}(t)-\int_0^t c_{\mathrm{fit}}(s)\, \d s.
\]
For comparison, the theoretically equivalent parameters $c_{\mathrm{mod}}(t)$ and $\gamma_{\mathrm{mod}}(t)$ were computed by numerically integrating~\eqref{eqn:postl}--\eqref{eqn:etadot}, again with a fourth-order Runge--Kutta method. \Cref{fig:fitvsmod} demonstrates the close agreement between the ``fitted'' and ``modulation'' parameters, with discrepancies limited to negligible truncation errors. Throughout the numerical figures in this paper, we have used $c(t) = c_{\mathrm{fit}}(t)$ and $\gamma(t) = \gamma_{\mathrm{fit}}(t)$.

  \begin{figure}[h]
    \centering
    \begin{subfigure}[t]{0.48\textwidth}
        \centering
        \includegraphics[width=\linewidth]{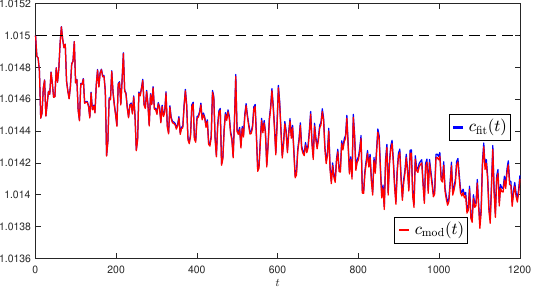}
    \end{subfigure}
    \hfill
    \begin{subfigure}[t]{0.48\textwidth}
        \centering
        \includegraphics[width=\linewidth]{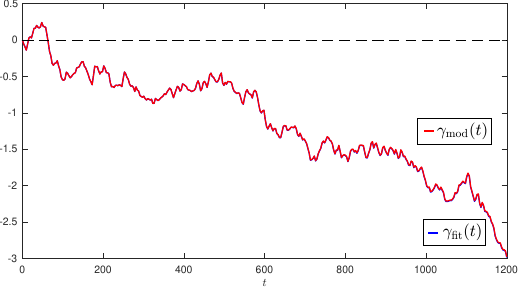}
    \end{subfigure}

    \caption{Comparison of $c_{\mathrm{fit}}(t)$ with the theoretically equivalent parameter $c_{\mathrm{mod}}(t)$ (left) and comparison of $\gamma_{\mathrm{fit}}(t)$ with $c_{\mathrm{mod}}(t)$ (right).}
    \label{fig:fitvsmod}
\end{figure}
For the numerical evaluation of the asymptotic tail $\overline{\eta}^\infty$ in \Cref{fig:tail}, based on $\overline{\eta}=\eta^{\mathrm{h}}_1$, we computed the corresponding asymptotic response function $\overline{R}^\infty$ directly from a discretization of~\eqref{eqn:Rinf} with the explicit kernel representation from \Cref{lem:discwave}. Additionally, we computed the response function $\overline{R}^\infty$ based on $\overline{\eta}=\eta_1$ using the true soliton linearization $\mathcal{A}_{c_*t,c_*}=\mathcal{L}_{c_*t,c_*}$, performing a numerical integration of~\eqref{eqn:response} until satisfactory convergence was achieved. We then used the resulting $\overline{R}^\infty$ functions to determine the asymptotic attenuation rate $\mathcal{Q}_c(c_*)$ according to \Cref{cor:limit} and~\eqref{eqn:rate}, in which the periodic quantity $M^{1,0}_{\mathrm{per}}(p,c_*)$ was evaluated by computing $M^{1,0}(p+n,c_*)$ for sufficiently large $n$.
\bibliographystyle{plain}
\bibliography{references}
\end{document}